\newcommand{\vertiii}[1]{{\left\vert\kern-0.25ex\left\vert\kern-0.25ex\left\vert #1 
    \right\vert\kern-0.25ex\right\vert\kern-0.25ex\right\vert}}%
\theoremstyle{definition}
\newtheorem{lemma}{Lemma}
\numberwithin{equation}{section}
\newcommand{\DG}{\mathrm{DG}}
\newcommand{\mesh}{\mathcal{E}}
\newcommand{\uDG}{u_h^{\mathrm{DG}}}
\newcommand{\uCG}{u_h^{\mathrm{CG}}}
\newcommand{\Bk}{\color{black}}
\newtheorem{theorem}{Theorem} 
\newtheorem{remark}{Remark}
\begin{document}
\title{Discontinuous Galerkin Approximations to Elliptic and Parabolic Problems with a Dirac Line Source} 
\author{Rami Masri}  \email{rami.masri@rice.edu} 
\author{Boqian Shen}\email{bs58@rice.edu} 
\author{Beatrice Riviere}  \email{riviere@rice.edu} 
\address{Department of Computational and Applied Mathematics, Rice University}
\date{\today}
\begin{abstract} 
The analyses of interior penalty discontinuous Galerkin methods of any order $k$ for solving elliptic and parabolic problems with Dirac line sources are presented. For the steady state case, we prove convergence of the method by deriving a priori error estimates in the $L^2$ norm and in weighted energy norms. In addition, we prove almost optimal local error estimates in the energy norm for any approximation order. Further, almost optimal local error estimates in the $L^2$ norm are obtained for the case of piecewise linear approximations whereas suboptimal error bounds in the $L^2$ norm are shown for any polynomial degree. 
For the time-dependent case, convergence of semi-discrete and of backward Euler fully discrete scheme is established by proving error estimates in $L^2$ in time and in space. Numerical results for the elliptic problem are added to support the theoretical results.
\end{abstract}
%
%
\subjclass{65M60, 65N30, 35J75}
\keywords{Interior penalty dG, convergence, local $L^2$ estimates, local energy estimates, singular solutions}
\maketitle

\section{Introduction}
In this paper, we analyze interior penalty discontinuous Galerkin (dG) approximations to elliptic and parabolic problems with a Dirac measure concentrated on a line. Consider a convex domain $\Omega \subset \mathbbm{R}^3$ containing a one-dimensional curve $\Lambda \subset\mathbbm{R}$ which is strictly  included in $\Omega$. The elliptic model problem reads 
\begin{alignat}{2}
-\Delta u  & = f \delta_{\Lambda},&& \quad \text{in } \Omega, \label{eq:model_pb} \\ 
u & = 0,&& \quad \text{on } \partial \Omega. \label{eq:model_pb_2}
\end{alignat}
where $f \in L^2(\Lambda)$ and $f\delta_{\Lambda}$ is a Dirac measure concentrated on $\Lambda$ defined as follows.  
\begin{equation}\label{eq:def_dirac_measure}
    \langle f\delta_{\Lambda}, v \rangle = \int_{\Lambda} fvds, \quad \forall v \in L^\infty(\Omega). 
\end{equation}
For the parabolic problem, let $T$ be the final time, let $u^0$ be in $L^2(\Omega)$ and assume that $f$ belongs to $L^2(0,T; L^2(\Lambda))$.
 We  consider the following problem. 
\begin{alignat}{2}
\partial_t u - \Delta u &= f \delta_{\Lambda}, && \quad  \mathrm{in } \,\,  \Omega \times (0,T], \label{eq:parabolic_1}\\   
u  & = 0, && \quad \mathrm{on}\,\,  \partial \Omega \times (0,T], \label{eq:parabolic_2} \\ 
u  & = u^0,  && \quad \mathrm{in} \,\,  \{0\} \times \Omega.  \label{eq:parabolic_3}
\end{alignat}
The main contributions of this work are as follows. For the elliptic problem, we show global convergence in the $L^2$ norm and in weighted energy norms. Further, in regions excluding the line $\Lambda$,   we derive 
almost optimal $L^2$ error estimates for linear polynomials and suboptimal error bounds of order almost $k$ for dG approximations of degree $k\geq 2$. In addition, almost optimal error rates are established in local energy norms for approximations of  any polynomial degree. For the parabolic problem, we show global convergence in the $L^2(0,T;L^2(\Omega))$ norm for both the semi-discrete approximation and for the backward Euler fully discrete scheme.   

Partial differential equations with Dirac right-hand sides  can model organ perfusion where blood vessels are considered as one dimensional fractures embedded in the tissue \cite{d2008coupling}. In this case, $f$ can be a function of the blood pressure in the vessel leading to a coupled 1D-3D problem for the pressures in the tissue and in the vessels \cite{d2012finite,d2008coupling}. Medical applications of such formulations include modeling drug delivery to tissues with the help of implantable devices \cite{d2007multiscale} and drug delivery to tumors where different treatment options are compared \cite{cattaneo2014computational}. In addition, Dirac measures concentrated on lines arise in optimal control problems \cite{gong2014approximations}.  Thanks to favorable properties of dG methods, including local mass conservation and adaptability to complex domains \cite{riviere2008discontinuous}, these methods are well suited to model physical phenomena such as organ perfusion. In this paper we study  dG methods applied to  \eqref{eq:model_pb}-\eqref{eq:model_pb_2} and to \eqref{eq:parabolic_1}-\eqref{eq:parabolic_3}.

The analysis of finite element approximations to model problems \eqref{eq:model_pb}-\eqref{eq:model_pb_2} and \eqref{eq:parabolic_1}-\eqref{eq:parabolic_3} is non--standard since the true solution is not smooth enough in space, namely it does not belong to $H^1(\Omega)$ and it exhibits a logarithmic singularity near the line $\Lambda$ \cite{d2012finite,koppl2016local,ariche2017regularity}. Nevertheless, continuous Galerkin (cG) approximations have been extensively studied; we refer to the work by Scott \cite{scott1973finite} and  Casas \cite{casas19852} where global error bounds are established. More recently and in the context of optimal control problems, Gong et al. derived improved global $L^2$ error bounds \cite{gong2014approximations}. Such bounds are polluted by the singularity of the true solution where the rate of convergence in the $L^2$ norm for any polynomial degree is at most $\mathcal{O}(h)$ where $h$ is the mesh-size. For continuous Galerkin approximations to \eqref{eq:parabolic_1}-\eqref{eq:parabolic_3}, global error estimates for semi-discrete and fully-discrete formulations are derived in \cite{gong2016finite,gong2013error}. 

In addition, convergence of the cG approximations to the elliptic model problem \eqref{eq:model_pb}-\eqref{eq:model_pb_2} has been investigated in different non--classical norms. For example, local $L^2$ optimal error estimates (up to a log factor for linear polynomials) are derived by K\"{o}ppl et al. \cite{koppl2014optimal,koppl2016local}, and  local energy error estimates are obtained by Bertoluzza et al. \cite{bertoluzza2018local}. Such improved estimates are possible since the solution is smooth in regions excluding the line $\Lambda$ \cite{ariche2017regularity}. In addition, D'Angelo obtained error estimates in weighted norms and showed that with graded meshes the finite element solution converges optimally in these norms \cite{d2012finite}. We also mention the recent splitting technique to numerically approximate the model problem \eqref{eq:model_pb}-\eqref{eq:model_pb_2} introduced by Gjerde et al. where the solution is split into an explicit singular part and an implicit smooth part \cite{gjerde2020singularity}. A finite element discretization is then formulated for the smooth part and optimal error rates are recovered \cite{gjerde2020singularity}.

To the best of our knowledge, discontinuous Galerkin approximations to \eqref{eq:model_pb}-\eqref{eq:model_pb_2} and to \eqref{eq:parabolic_1}-\eqref{eq:parabolic_3} are missing from the literature. However, there are papers which formulate and study dG methods for elliptic problems with Dirac sources concentrated at a point. To this end, we mention the work by Houston and Wihler where global a priori and a posteriori error bounds are derived \cite{houston2012discontinuous}. Recently, Choi and Lee derived local $L^2$ error estimates \cite{choi2020optimal}. 
The analysis of dG methods for elliptic problems  is particularly challenging since consistency of the numerical method cannot be assumed since the traces of the solution and its gradient are not well defined.   

The rest of this paper is organized as follows. Weak formulations in usual and in weighted Sobolev spaces are presented and shown to be equivalent in Section~\ref{sec:weak_formulations}. Then, Section~\ref{sec:numerical_approximation} defines the cG and dG discrete solutions to model problem \eqref{eq:model_pb}-\eqref{eq:model_pb_2}. We show global convergence in the $L^2$ norm  in Section~\ref{sec:global_conv} and in weighted dG norms in Section~\ref{sec:weighted_global_conv}. The local convergence of the solution is analyzed in Section~\ref{sec:local_estimates}. We devote Section~\ref{sec:parabolic_analysis} to the analysis of dG formulations for \eqref{eq:parabolic_1}-\eqref{eq:parabolic_3}. Numerical results for the elliptic problem are presented in Section~\ref{sec:numerical_results}. 

\section{Weak formulation} \label{sec:weak_formulations}
Fix  $p_0 \in [1,3/2)$ and $q_0$ be such that $1/q_0+1/p_0 = 1$.   Let $W^{1,p_0}(\Omega)$ denote the usual Sobolev space and 
recall that  \[
W_0^{1,p_0}(\Omega) = \{ v\in W^{1,p_0}(\Omega), \quad v=0 \quad \mbox{on}\quad \partial\Omega\}.
\]
The weak formulation for problem \eqref{eq:model_pb}-\eqref{eq:model_pb_2} is \cite{casas19852}: Find $u \in W_0^{1,p_0}(\Omega)$ such that: 
\begin{equation}
    \int_{\Omega} \nabla u \cdot \nabla v  = \int_{\Lambda} fv, \quad \forall v \in W_0^{1,q_0}(\Omega). \label{eq:weak_form_u}
\end{equation}
This weak formulation is well posed and a unique solution $u \in W_0^{1,p_0}(\Omega)$ for $p_0 \in [1,3/2)$ exists \cite{casas19852}. 
Next, in a similar way to \cite{d2012finite}, we present another weak formulation of problem~\eqref{eq:model_pb}-\eqref{eq:model_pb_2} in weighted Sobolev spaces. 
Define the distance function to $\Lambda$:
\begin{equation}\label{eq:distdef}
 d(\bm{x},\Lambda) = \text{dist}(\bm{x}, \Lambda) = \min_{\bm{y}\in \Lambda} \Vert \bm{x} -\bm{y} \Vert, \quad \forall \bm{x} \in \Omega. 
\end{equation}
We first remark that $d^{\alpha}$ is an $A_2$ weight for $|\alpha| < 2$ (see Lemma 3.3 in \cite{duran2010solutions}) where $A_2$ is the Muckenhoupt class of weights satisfying:
 \[A_2 = \Big\{  w \in L^1_{\mathrm{loc}}(\mathbbm{R}^3), \,\, \sup_{B(\bm{x},r)}\left(\frac{1}{|B(\bm{x},r)|}\int_{B(\bm{x},r)} w \right) \left(\frac{1}{|B(\bm{x},r)|} \int_{B(\bm{x},r)} w^{-1}\right) < \infty \Big\},  \]
 where the supremum is taken over all balls  $B(\bm{x},r)$ centered at $\bm{x}$ and of radius $r$.
This implies that $d^\alpha$ belongs to $L^2(\Omega)$ if $\vert \alpha\vert < 1$. 
We assume that the distance function satisfies the following bounds 
(see Theorem 3.4 in \cite{delfour1994shape}).
\begin{alignat}{2}
\|\nabla d\|_{L^{\infty}(\Omega)}   \leq 1, \quad
 \|\nabla^2 d^2 \|_{L^{\infty}(\Omega)}\leq C.
\label{eq:boundond}
\end{alignat}
 Using the fact the $\nabla d^\alpha = \alpha d^{\alpha-1} \nabla d$, we then have that $d^\alpha \in H^1(\Omega)$
if $0<\alpha<1$.
For $\alpha \in (-1,1)$, define the weighted $L^2$ norm as follows.
\begin{equation}
\|u\|_{L^2_{\alpha}(\Omega)} =  \left( \int_{\Omega} |u|^2 d^{2\alpha} \right)^{\frac{1}{2}}. 
\end{equation}
The  $L^2_{\alpha}(\Omega)$ space and the weighted inner product are defined as:
\[L^2_{\alpha}(\Omega) = \{ v: \, \|v\|_{L^2_{\alpha}(\Omega)} < \infty \}, \quad (u,v)_\alpha = \int_{\Omega} uvd^{2\alpha}, \quad \forall u,v \in L^2_{\alpha}(\Omega). \]
Similarly, we introduce the weighted Sobolev spaces as:
\[ H^{m}_{\alpha} (\Omega) = \{u : D^{\bm{\beta} }u \in L^2_{\alpha}(\Omega), |\bm{\beta}| \leq m  \}, \,\,\, \mathring{H}^{m}_{\alpha}(\Omega) = \{ u \in H^{m}_{\alpha}(\Omega), u \vert_{\partial \Omega} = 0\}. \]
where $\bm{\beta}$ is a multi-index and $D^{\bm{\beta}}$ is the corresponding weak derivative.
The weighted Sobolev semi-norms and norms are denoted by:
\begin{align*}
    |u|_{H^{m}_{\alpha}(\Omega)}^2 =  \sum_{|\bm{\beta}| = m} \|D^{\bm{\beta}}u\|_{L^2_{\alpha}(\Omega)}^2, \quad  \| u\|_{H^{m}_{\alpha}(\Omega)}^2 = \sum_{k=0}^{m}  |u|_{H^m_{\alpha}(\Omega)}^2.
\end{align*}
\begin{lemma}\label{lemma:equiv_weak_forms}
Let $\alpha$ be such that $-2/p_0 + 1 < \alpha < 2/p_0 -1$.
Then, the  weak formulation \eqref{eq:weak_form_u} is equivalent to the following weak formulation:  find $u_\alpha \in \mathring{H}^1_{\alpha}(\Omega)$ such that
\begin{equation}
\int_{\Omega} \nabla u_\alpha \cdot \nabla v = \int_{\Lambda} fv, \quad \forall v \in \mathring{H}^{1}_{-\alpha}(\Omega).\label{eq:weak_form_weighted}  
\end{equation}
\end{lemma}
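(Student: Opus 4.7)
The plan is to establish continuous embeddings between the weighted and classical Sobolev spaces via Hölder's inequality, and then to combine these with the uniqueness of the $W_0^{1,p_0}$-solution from \cite{casas19852}. For $w \in \mathring{H}^1_\alpha(\Omega)$, I would split $|\nabla w|^{p_0} = (|\nabla w|^2 d^{2\alpha})^{p_0/2}\, d^{-\alpha p_0}$ and apply Hölder with conjugate exponents $2/p_0>1$ and $2/(2-p_0)>1$ (both legal since $p_0<2$) to obtain
\[
\int_\Omega |\nabla w|^{p_0} \le \Big(\int_\Omega |\nabla w|^2 d^{2\alpha}\Big)^{p_0/2}\Big(\int_\Omega d^{-2\alpha p_0/(2-p_0)}\Big)^{(2-p_0)/2}.
\]
The second factor is finite precisely when the exponent exceeds $-2$ (using that $d^\beta$ is locally integrable near a line in $\mathbbm{R}^3$ for $\beta>-2$), which simplifies to $\alpha < 2/p_0 - 1$. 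Together with a weighted Poincar\'e inequality (available because $d^{2\alpha}$ is $A_2$), this yields $\mathring{H}^1_\alpha(\Omega) \hookrightarrow W_0^{1,p_0}(\Omega)$. An analogous Hölder computation with $q_0$ in place of $p_0$ and $-\alpha$ in place of $\alpha$ produces $W_0^{1,q_0}(\Omega) \hookrightarrow \mathring{H}^1_{-\alpha}(\Omega)$ and the symmetric counterparts; the upper bound on $\alpha$ handles the embeddings involving $\alpha$, while the lower bound $\alpha>-2/p_0+1$ handles those involving $-\alpha$.

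With these embeddings in hand, the implication $\eqref{eq:weak_form_weighted}\Rightarrow\eqref{eq:weak_form_u}$ is essentially automatic: any $u_\alpha \in \mathring{H}^1_\alpha(\Omega)$ solving the weighted identity already lies in $W_0^{1,p_0}(\Omega)$, and every test $v \in W_0^{1,q_0}(\Omega)$ is admissible in \eqref{eq:weak_form_weighted} since $v \in \mathring{H}^1_{-\alpha}(\Omega)$; uniqueness of the $W_0^{1,p_0}$-solution \cite{casas19852} then forces $u_\alpha = u$. For the reverse direction, I would need two additional ingredients. First, the weighted regularity $u \in \mathring{H}^1_\alpha(\Omega)$, which I expect to verify by invoking the classical decomposition of $u$ into a logarithmic singular part $\sim -\log d$ near $\Lambda$ and a smoother remainder \cite{ariche2017regularity}, and then computing $\int_\Omega |\nabla u|^2 d^{2\alpha}$ directly in cylindrical coordinates adapted to $\Lambda$. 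Second, extending the identity from $v \in W_0^{1,q_0}(\Omega)$ to $v \in \mathring{H}^1_{-\alpha}(\Omega)$ by density (via mollification, using that $d^{-2\alpha}$ is $A_2$) together with continuity of both sides of \eqref{eq:weak_form_weighted} in $\|\cdot\|_{\mathring{H}^1_{-\alpha}}$---weighted Cauchy--Schwarz on the left side, and a trace estimate on $\Lambda$ on the right.

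The delicate step will be the reverse direction: establishing $u \in \mathring{H}^1_\alpha(\Omega)$ over the full admissible range and justifying the density of $W_0^{1,q_0}(\Omega)$ in $\mathring{H}^1_{-\alpha}(\Omega)$. Both rely on careful handling of the near-$\Lambda$ singular structure of $u$ together with the $A_2$ properties of $d^{\pm 2\alpha}$; the forward direction, by contrast, is an almost mechanical consequence of the Hölder inequality and the already-known well-posedness in the classical $W^{1,p_0}$ framework.
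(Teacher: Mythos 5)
Your Hölder computations and the resulting embeddings $\mathring{H}^1_{\alpha}(\Omega)\subset W_0^{1,p_0}(\Omega)$ and $W_0^{1,q_0}(\Omega)\subset \mathring{H}^1_{-\alpha}(\Omega)$ are exactly the paper's argument (the paper splits $\int_\Omega v^{p_0}=\int_\Omega v^{p_0}d^{p_0\alpha}d^{-p_0\alpha}$ and uses that the exponent $-2\alpha p_0/(2-p_0)$ lies in the locally integrable range under the stated restriction on $\alpha$; your observation that the condition is $\beta>-2$ for $d^\beta$ near a line is if anything slightly more careful than the paper's wording). Your forward direction is likewise the paper's: a weighted solution lies in $W_0^{1,p_0}(\Omega)$, admits all $W_0^{1,q_0}$ test functions, and uniqueness of the classical solution forces the two to coincide.

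The gap is in your reverse direction, and it stems from not using the one external fact the paper leans on: existence and uniqueness of the weighted solution $u_\alpha$ of \eqref{eq:weak_form_weighted}, which is cited from \cite{d2012finite} (see also \cite{drelichman2020weighted}). With that in hand, the reverse direction costs nothing: the forward direction identifies $u_\alpha=u$, so $u$ automatically belongs to $\mathring{H}^1_{\alpha}(\Omega)$ and satisfies \eqref{eq:weak_form_weighted} for every $v\in\mathring{H}^1_{-\alpha}(\Omega)$ — no singular decomposition of $u$, no cylindrical-coordinate computation of $\int_\Omega|\nabla u|^2d^{2\alpha}$, and no density of $W_0^{1,q_0}(\Omega)$ in $\mathring{H}^1_{-\alpha}(\Omega)$ are needed. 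As written, your plan leaves both of those "delicate steps" unproved, and each is a substantial piece of analysis in its own right (in particular, the continuity of $v\mapsto\int_\Lambda fv$ on $\mathring{H}^1_{-\alpha}(\Omega)$ is a weighted trace inequality that is itself part of the well-posedness theory you would be reproving). Either carry those steps out in full, or — much more economically — import the well-posedness of the weighted formulation and let the uniqueness argument do the work in both directions at once.
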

\begin{proof}
Let $u_\alpha$ be a solution of \eqref{eq:weak_form_weighted}. The existence and uniqueness of $u_\alpha$ is established in \cite{d2012finite}, see also \cite{drelichman2020weighted}. Observe that the condition on $\alpha$ implies that $(\alpha p_0 )/(2-p_0) = (\alpha q_0)/(q_0-2) \in (-1,1)$. Since $d^{\gamma} \in L^1_{\mathrm{loc}}(\mathbbm{R}^3)$ for $|\gamma| \leq 2$, we use H\"{o}lder's inequality and obtain
\begin{align}
\int_{\Omega} d^{-2\alpha} v^2 \leq \left( \int_{\Omega} d^{-2\alpha \frac{q_0}{q_0-2}} \right)^{(q_0-2)/q_0} \|v\|^{2/q_0}_{L^{q_0}(\Omega)} < \infty, \quad \forall v \in L^{q_0}(\Omega).
\end{align}
This implies that $W_0^{1,q_0}(\Omega) \subset \mathring{H}^1_{-\alpha}(\Omega)$. Hence $u_\alpha$ satisfies \eqref{eq:weak_form_u} for all $v \in W_0^{1,q_0}(\Omega)$.
Similarly, for $v \in L^{2}_{\alpha}(\Omega)$, we have
\begin{equation*}
    \int_{\Omega} v^{p_0} = \int_\Omega v^{p_0} d^{p_0\alpha} d^{-p_0\alpha} \leq \left(\int_\Omega v^2 d^{2\alpha}\right)^{p_0/2} \left(\int_\Omega d^{-2\alpha \frac{p_0}{2-p_0} }\right)^{(2-p_0)/2} < \infty, \quad \forall v \in L^2_{\alpha}(\Omega).
\end{equation*}
This implies that $\mathring{H}^1_{\alpha}(\Omega) \subset W_0^{1,p_0}(\Omega)$. Thus, $u_\alpha$ solves \eqref{eq:weak_form_u}. Since the solution to \eqref{eq:weak_form_u} is unique (see Theorem 2.1 case (ii) in \cite{gong2014approximations}), we conclude that $u_\alpha = u$.
\end{proof}
\section{Numerical approximations}\label{sec:numerical_approximation}
Let $\mathcal{E}_h$ denote a partition of $\Omega$, made of simplices:
\begin{equation}
 \bigcup_{E \in \mesh_h} \bar{E} = \bar{\Omega}. 
\end{equation}
The diameter of a given element $E$ is denoted by $h_E$ and the mesh size is denoted by $h = \max_{E \in \mesh_h} h_E$. We assume that $\mesh_h$ is regular in the sense that there exists a constant $\rho > 0$ such that 
\begin{equation}
 \frac{h_E}{\rho_E} \leq \rho, \quad \forall E\in \mesh_h,  \label{eq:mesh_regular}
\end{equation}
where $\rho_E$ is the maximum diameter of a ball inscribed in $E$. In addition, we assume that $\mesh_h$ is quasi-uniform: there is a constant $\gamma >0$ independent of $h$ such that
\begin{equation}
    h \leq \gamma h_E, \quad \forall E \in \mesh_h. \label{eq:quasi_uniform}
\end{equation} The broken Sobolev space is denoted by $H^m(\mesh_h)$ for $m\geq 1$, and the broken gradient is denoted
by $\nabla_h$.  In the remaining of the paper, $k\geq 1$ is a fixed positive integer 
and $C$ is a generic constant independent of $h$.
\subsection{Finite element approximation}
Let $W^k_h(\mesh_h)$ be the finite element space defined as follows.  
\begin{equation} W_h^k(\mesh_h)= \{ w_h \in H^1_0(\Omega):  w_h \vert_E \in \mathbbm{P}^k(E), \,\, \forall E \in \mesh_h\}.
\end{equation} 
Here, $\mathbbm{P}^k(E)$ denotes the space of polynomials of degree at most $k$. Let $\uCG \in W^k_h(\mesh_h)$ be the finite element
approximation to $u$ satisfying 
\begin{equation}
\int_{\Omega} \nabla \uCG \cdot \nabla v_h  = \int_{\Lambda} f v_h, \quad \forall v_h \in W^k_h(\mesh_h). \label{eq:discrete_F.E}
\end{equation} 
\subsection{Discontinuous Galerkin approximation}
We now introduce the interior penalty discontinuous Galerkin discrete solution \cite{riviere2008discontinuous}. We define the broken polynomial space as follows.
\begin{align} 
V_h^{k}(\mesh_h) = \{ v_h \in L^2(\Omega): v_h \vert_{E} \in \mathbbm{P}^k(E), \forall E \in \mathcal{E}_h \}. 
\end{align} 
We also denote by $\Gamma_h$ the set of all interior faces in $\mesh_h$.  For each interior face $e$, we associate a unit normal
vector $\bm{n}_e$ and we denote by $E_e^1$ and $E_e^2$ the two elements that share $e$ such that the vector $\bm{n}_e$ points from 
$E_e^1$ to $E_e^2$. We denote the average and the jump of a function $v_h \in V_h^k(\mesh_h)$ by $\{v_h\}$
and $[v_h]$ respectively.  
\begin{align}
\{v_h\} = \frac{1}{2} \left( v_h\vert_{E_e^1} + v_h \vert_{E_e^2}  \right), \quad [v_h] = v_h \vert_{E_e^1} - v_h\vert_{E_e^2},
\quad \forall e\in\Gamma_h.  
\end{align} 
If $e$ belongs to the boundary of the domain, $e = \partial \Omega \cap \partial E_e^1$, then we define the average and the jump  as follows. 
\begin{equation}
    [v] = \{ v\} = v\vert_{E_e^1}. 
\end{equation}
Let $u_h^\DG \in V_h^{k}(\mesh_h)$ be the discontinuous Galerkin solution satisfying:
\begin{align} a_{\epsilon}(u_h^{\DG},v_h) = \int_{\Lambda} fv_h,\quad \forall  v_h \in V_h^{k}(\mesh_h), \label{eq:DG_weak_form}\end{align}
where $a_{\epsilon}(\cdot, \cdot): V_h^{k}(\mesh_h) \times V_h^{k}(\mesh_h) \rightarrow \mathbbm{R}$ is given by: 
\begin{eqnarray} 
a_{\epsilon}(u,v) &=& \sum_{E \in \mesh_h} \int_E \nabla u \cdot  \nabla v   - \sum_{e \in \Gamma_h \cup \partial \Omega} \int_e  \{\nabla u\}\cdot \bm{n}_e [v] \nonumber  \\ 
&&+ \epsilon \sum_{e \in \Gamma_h \cup \partial \Omega} \int_e  \{\nabla v\}\cdot \bm{n}_e [u]  + \sum_{e \in \Gamma_h \cup \partial \Omega} \int_e \frac{\sigma}{h^{\beta}} [u][v].  \label{eq:dg_bilinear_form} 
\end{eqnarray}
In the above, $\epsilon \in \{-1,0,1\}$, $\sigma$ is a user specified parameter and $\beta\geq 1$ is a parameter to be specified in the subsequent sections.  
We define the following energy semi-norm. For $B \subseteq \Omega$ or $B = \overline{\Omega}$ and $ v_h \in V_h^k(\mesh_h)$, 
\begin{equation}
\|v_h \|^2_{\DG(B)} = \sum_{E \in \mesh_h} \| \nabla v_h \|^2_{L^2(E \cap B)} + \sum_{e\in \Gamma_h \cup \partial \Omega} \sigma h^{-1} \|[v_h]\|^2_{L^2(e \cap  B)}. \label{eq:dg_norm}
\end{equation}
For simplicity, we write $\|\cdot \|^2_{\DG} = \|\cdot\|^2_{\DG(\overline{\Omega})}$. We also note that $\|\cdot \|_{\DG}$ defines a norm and the following Poincare inequality holds \cite{di2010discrete}.
\begin{equation}
\|v_h\|_{L^p(\Omega)} \leq C\|v_h\|_{\DG}, \quad \forall 1\leq p\leq 6, \,\,\, \forall v_h \in V_h^k(\mesh_h). \label{eq:3d_1d_Poincare_inequality}
\end{equation}
In the analysis, we will also use the following semi-norm. For $v \in H^2(\mesh_h)$ and $B \subseteq \Omega$ or $B = \overline{\Omega}$, 
\begin{equation}
\vertiii{v}^2_{\DG(B)}  = \|v\|^2_{\DG(B)} + \sum_{e\in \Gamma_h \cup \partial \Omega} h \|\{\nabla v\}\|^2_{L^2(e\cap B  )}. 
\end{equation}
Similarly, denote $\vertiii{\cdot}_{\DG}^2 = \vertiii{\cdot}_{\DG(\overline{\Omega})}^2$. 
We then have the following continuity properties of the form $a_\epsilon$ \cite{chen2004pointwise,riviere2008discontinuous}. 
\begin{equation}
 a_\epsilon  (v,w) \leq C \vertiii{v}_{\DG} \vertiii{w}_{\DG} , \quad a_\epsilon  (v_h,w_h) \leq C \|v_h\|_{\DG} \|w_h\|_{\DG},
\quad \forall v,w\in H^2(\mesh_h), \, \forall v_h,w_h \in V_h^k(\mesh_h). \label{eq:continuity_prop}
\end{equation} 
In addition, the following coercivity property \begin{equation}
    a_\epsilon(w_h, w_h) \geq \frac12 \|w_h\|^2_{\DG}, \quad \forall w_h \in V_h^k(\mesh_h), \label{eq:coercivity_property}
    \end{equation}  
 is valid for any value $\sigma\geq 1$ if $\epsilon = +1$ and for $\sigma$ large enough if $\epsilon = -1, 0$. We recall the following important inverse inequalities, see Section 4.5 in \cite{brenner2007mathematical}. 
 \begin{equation}
    \|v_h\|_{L^q(\Omega)} \leq Ch^{\frac{3}{q} - \frac{3}{p}} \|v_h\|_{L^p(\Omega)}, \quad \forall \, 1\leq p \leq q \leq \infty, \,\,\, \forall \,  v_h \in V_h^{k} (\mesh_h). \label{eq:3d_1d_inverse_estimate}
  \end{equation}
For the trace estimates, we will make use of the following. 
\begin{align}
\|v\|_{L^2(e)} \leq C h^{-1/2}(\|v\|_{L^2(E)} + h \|\nabla v\|_{L^2(E)}), \quad \forall e \subset\partial E, \quad \forall E \in \mesh_h, \quad \forall v \in H^1(\mesh_h).\label{eq:3d_1d_trace_estimate_continuous} 
\end{align}
For discrete functions,  the above estimate reads 
\begin{align}
    \|v_h\|_{L^2(e)} \leq C h^{-1/2}\|v_h\|_{L^2(E)}, \quad \forall e \subset\partial E, \quad  \forall E \in \mesh_h, \quad \forall v_h \in V_h^k(\mesh_h).\label{eq:3d_1d_trace_estimate_discrete}
    \end{align}
Further, we recall that for any $p\in[1,\infty]$, 
\begin{align}
\|\nabla_h v_h\|_{L^p(\Omega)} \leq C h^{-1} \|v_h\|_{L^p(\Omega)}, \quad \forall v_h \in V_h^k(\mesh_h). \label{eq:3d_1d_inverse_esimate_nabla}
\end{align}
\section{Global error estimate in the $L^2$ norm} \label{sec:global_conv}
The goal of this section is to show a global $L^2$ estimate for the error $u$ - $u^{\DG}_h$.
We first recall  important global $L^2$ estimates  
 for the finite element discretization \eqref{eq:discrete_F.E}. 
For $k=1$. Casas obtained the following estimate \cite{casas19852}, 
\begin{equation} \Vert u - \uCG \Vert_{L^2(\Omega)} \leq C h^{1/2} \Vert f \Vert_{L^2(\Lambda)}. \label{eq:L2estimate_F.E} \end{equation}
If the line $\Lambda$ is a $\mathcal{C}^2$ curve that does not intersect the boundary $\partial\Omega$, the improved estimate 
\begin{equation} 
\Vert u - \uCG \Vert_{L^2(\Omega)} \leq C(\theta) h^{1- \theta} \Vert f \Vert_{L^2(\Lambda)},   \quad 0 < \theta < \frac12, \label{eq:improved_L2estimate_F.E_h} \end{equation}
was proved by Gong et al. for $k=1$ in \cite{gong2014approximations}. 
Similar arguments yield the same error bounds for $k\geq 2$.
 The parameter $\theta$ arises from the fact that $u\in W_0^{1,\frac{6}{2\theta+3}}(\Omega)$ when $0<\theta<1/2$. 
We follow the ideas of Scott \cite{scott1973finite} and Houston and Wihler \cite{houston2012discontinuous} presented for a problem with a Dirac source concentrated at a point, and we construct an intermediate problem with an $L^2$ source term. Let $\mathcal{T}_{\Lambda} \subset \mesh_h$ be the set of elements that intersect the line $\Lambda$, 
\[ \mathcal{T}_{\Lambda} = \{ E \in \mesh_h, \,\,\,  E \cap \Lambda \neq \emptyset \}.\]
Define $f_h \in V_h^k(\mesh_h)$ as  
\begin{equation}
    \forall E \in\mathcal{E}_h, \quad f_h|_E = \begin{cases}
        f_{h,E}, & \mbox{if } \, E \in \mathcal{T}_{\Lambda}, \\ 
        0,   & \text{otherwise}, 
    \end{cases}
\end{equation} 
where $f_{h,E} \in \mathbbm{P}^k(E)$ is defined as follows. For $E \in \mathcal{T}_\Lambda$, 
\begin{equation}
\label{eq:def_fh}
    \int_E f_{h,E} v_h  = \int_{E\cap \Lambda} f v_h, \quad \forall v_h  \in \mathbbm{P}^k(E).  
\end{equation}
 Clearly, the function $f_{h,E}$ is well defined. 
Further, consider the following intermediate problem:  find $U\in H_0^1(\Omega)$ such that
\begin{alignat}{2} 
- \Delta U   & = f_h,  && \quad \text{in}\,\, \Omega, \label{eq:intermediate_pb} \\ 
U & = 0, && \quad \text{on} \,\, \partial \Omega. 
\end{alignat} 
 Since $f_h$ belongs to $L^2(\Omega)$, Lax-Milgram's theorem yields existence and uniqueness of $U$.  In addition, since
$\Omega$ is convex, the function $U$ belongs to $H^2(\Omega)$.  
We proceed by obtaining a bound on $f_h$ in the following lemma. 
\begin{lemma}\label{lemma:bound_on_fh} 
The following estimate holds
\begin{equation}
    \Vert f_{h} \Vert_{L^2(\Omega)} \leq Ch^{-3/2} \Vert f \Vert_{L^2(\Lambda)}. \label{eq:bd_fh}
\end{equation}
In addition, if $\Lambda$ is a $\mathcal{C}^2$ curve  and the mesh satisfies $|\Lambda \cap E | \leq C h$ for all $E \in \mesh_h$, we have 
\begin{equation}
    \Vert f_{h} \Vert_{L^2(\Omega)} \leq Ch^{-1} \Vert f \Vert_{L^2(\Lambda)}. \label{eq:improved_bd_fh}
\end{equation}
\end{lemma}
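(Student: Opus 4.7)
\smallskip
\noindent\textbf{Proof plan for Lemma~\ref{lemma:bound_on_fh}.}
The plan is to work element-by-element: for each $E\in\mathcal{T}_\Lambda$ I will obtain a bound of the form
$\|f_{h,E}\|_{L^2(E)}\leq C|E\cap\Lambda|^{1/2} h^{-3/2}\|f\|_{L^2(E\cap\Lambda)}$,
and then distinguish the two cases by using different bounds on $|E\cap\Lambda|$.

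First I would fix $E\in\mathcal{T}_\Lambda$ and test the definition \eqref{eq:def_fh} against $v_h=f_{h,E}\in\mathbb{P}^k(E)$, yielding
$\|f_{h,E}\|_{L^2(E)}^2=\int_{E\cap\Lambda} f\, f_{h,E}\,\mathrm{d}s$.
Applying the Cauchy--Schwarz inequality along the curve and then bounding the $L^2$ norm on $E\cap\Lambda$ by the $L^\infty$ norm on $E$ gives
\[
\|f_{h,E}\|_{L^2(E)}^2 \leq \|f\|_{L^2(E\cap\Lambda)}\,|E\cap\Lambda|^{1/2}\,\|f_{h,E}\|_{L^\infty(E)}.
\]
I would then invoke the local form of the inverse inequality \eqref{eq:3d_1d_inverse_estimate} (with $p=2$, $q=\infty$ on the element $E$), which by quasi-uniformity \eqref{eq:quasi_uniform} yields $\|f_{h,E}\|_{L^\infty(E)}\leq C h^{-3/2}\|f_{h,E}\|_{L^2(E)}$. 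Dividing through produces the basic elementwise bound
\[
\|f_{h,E}\|_{L^2(E)}\leq C\, h^{-3/2}\,|E\cap\Lambda|^{1/2}\,\|f\|_{L^2(E\cap\Lambda)}.
\]

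For \eqref{eq:bd_fh} I use only the trivial estimate $|E\cap\Lambda|\leq |\Lambda|$, which is a finite constant independent of $h$. Squaring, summing over $E\in\mathcal{T}_\Lambda$, and using that the sets $E\cap\Lambda$ tile $\Lambda$ gives
$\|f_h\|_{L^2(\Omega)}^2 \leq C\,|\Lambda|\,h^{-3}\sum_E\|f\|_{L^2(E\cap\Lambda)}^2 = C\,h^{-3}\|f\|_{L^2(\Lambda)}^2$,
which after taking square roots yields the claimed exponent $-3/2$. For the improved bound \eqref{eq:improved_bd_fh}, I replace $|E\cap\Lambda|\leq|\Lambda|$ with the assumed uniform estimate $|E\cap\Lambda|\leq C h$, and the same summation then produces $\|f_h\|_{L^2(\Omega)}^2\leq C h^{-2}\|f\|_{L^2(\Lambda)}^2$.

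There is no real obstacle here; the lemma is essentially a scaling calculation. The only point that requires any care is the passage from the 1D $L^2$-norm on the curve segment $E\cap\Lambda$ to the 3D $L^2$-norm on $E$, which I handle via the $L^\infty$ detour combined with the inverse inequality. The difference between the two bounds is transparent in this formulation: it is precisely the difference between the trivial bound $|E\cap\Lambda|=O(1)$ and the geometric bound $|E\cap\Lambda|=O(h)$ available when $\Lambda$ is a smooth curve and the mesh resolves it at scale $h$.
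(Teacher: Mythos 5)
Your proof is correct and follows essentially the same route as the paper: both rest on the identity $\|f_{h,E}\|_{L^2(E)}^2=\int_{E\cap\Lambda}f\,f_{h,E}$, the $L^\infty$--$L^2$ inverse inequality with the $h^{-3/2}$ factor, and the Cauchy--Schwarz step producing $|E\cap\Lambda|^{1/2}$. The only (immaterial) difference is that you derive a clean elementwise bound and then square and sum, whereas the paper sums first and applies Cauchy--Schwarz for sums at the end.
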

\begin{proof}
With the definition of $f_h$ given in \eqref{eq:def_fh}, we have
\begin{align*} 
\Vert f_h \Vert^2_{L^2(\Omega)} = \int_{\Omega} f_h^2 &= \sum_{E \in \mesh_h}  \int_{E}  (f_h|_E)^2    =\sum_{E \in \mathcal{T}_\Lambda}  \int_{E \cap \Lambda} f_{h,E} f. 
\end{align*}
Using H\"{o}lder's inequality, we obtain  
\[  \int_{E \cap \Lambda} f_{h,E} f  \leq \Vert f_{h,E} \Vert_{L^{\infty}(E)} \Vert f \Vert_{L^1(E \cap \Lambda)}.\]
Hence, with \eqref{eq:3d_1d_inverse_estimate} ($q= \infty, p =2$), and  \eqref{eq:quasi_uniform}, we obtain  
\begin{align*} 
\Vert f_h \Vert^2_{L^2(\Omega)} \leq \sum_{E \in \mathcal{T}_{\Lambda}} \Vert f_{h,E} \Vert_{L^{\infty}(E)} \Vert f \Vert_{L^1(E \cap \Lambda)}  & \leq Ch^{-3/2}  \sum_{E \in \mathcal{T}_{\Lambda}}  \Vert f_{h,E} \Vert_{L^2(E)}   \Vert f \Vert_{L^1(E \cap \Lambda)} \\ &  \leq   Ch^{-3/2}  \sum_{E \in \mathcal{T}_{\Lambda}}  \Vert f_{h,E} \Vert_{L^2(E)} |\Lambda \cap E |^{1/2} \Vert f \Vert_{L^2(E \cap \Lambda)}. 
\end{align*}
If $|\Lambda \cap E| \leq Ch$, we apply H\"{o}lder's inequality for sums and obtain \eqref{eq:improved_bd_fh}. Otherwise, we have \eqref{eq:bd_fh}.
\end{proof}
The following a priori error bounds hold.
\begin{lemma}\label{lem:Uresults}
There exists a constant $C$ independent of $h$ such that
\begin{align}
\Vert U- \uCG \Vert_{L^2(\Omega)} + h \Vert \nabla (U-\uCG) \Vert_{L^2(\Omega)} \leq Ch^2 \Vert U \Vert_{H^2(\Omega)},  \label{eq:global_bd_interm_f.e}\\
\vertiii{U - u_h^{\mathrm{DG}}}_{\DG} \leq C h \Vert U\Vert_{H^2(\Omega)}.\label{eq:xiDGfull}
\end{align}
If in addition, $\beta = 1$ and $\sigma$ is large enough if $\epsilon = -1$
or $\beta > 3/2$ and $\sigma$ is large enough for $\epsilon = 0$ or $\epsilon = 1$,  
there exists a constant $C$ independent of $h$ such that
\begin{equation}
\Vert  U - u_h^{\mathrm{DG}} \Vert_{L^2(\Omega)} \leq C h^2 \Vert U\Vert_{H^2(\Omega)}.
\label{eq:global_bd_dg}
\end{equation}
\end{lemma}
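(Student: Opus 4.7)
The plan is to notice that $u_h^{\mathrm{CG}}$ and $u_h^{\mathrm{DG}}$ are, respectively, the standard conforming and IPDG finite element approximations of the auxiliary problem \eqref{eq:intermediate_pb}. Indeed, by the definition \eqref{eq:def_fh} of $f_h$, for any $v_h \in V_h^k(\mesh_h)$ (in particular for $v_h \in W_h^k(\mesh_h)$), we have
\[
\int_{\Omega} f_h v_h \;=\; \sum_{E\in\mathcal{T}_\Lambda}\int_E f_{h,E}\, v_h \;=\; \sum_{E\in\mathcal{T}_\Lambda}\int_{E\cap\Lambda} f v_h \;=\; \int_{\Lambda} f v_h .
\]
Therefore, comparing with \eqref{eq:discrete_F.E} and \eqref{eq:DG_weak_form}, the discrete functions $u_h^{\mathrm{CG}}$ and $u_h^{\mathrm{DG}}$ solve the Galerkin systems associated with $-\Delta U = f_h$ with homogeneous Dirichlet data. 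Since $\Omega$ is convex and $f_h\in L^2(\Omega)$, elliptic regularity gives $U\in H^2(\Omega)\cap H^1_0(\Omega)$, so all three bounds reduce to textbook a priori estimates for smooth solutions.

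First I would establish \eqref{eq:global_bd_interm_f.e} via C\'ea's lemma followed by the Aubin--Nitsche duality argument: standard Lagrange interpolation in $W^k_h(\mesh_h)$ gives the $H^1$ estimate of order $h$, and the dual problem $-\Delta\varphi = U - \uCG$ with homogeneous boundary data has $H^2$ regularity (again by convexity), yielding the $L^2$ bound of order $h^2$. For \eqref{eq:xiDGfull} I would use that $U\in H^2(\Omega)$ makes $a_\epsilon(\cdot,\cdot)$ consistent, hence Galerkin orthogonality holds between $U$ and $u_h^{\mathrm{DG}}$; splitting the error through a continuous interpolant $I_h U \in V_h^k(\mesh_h)$ and combining coercivity \eqref{eq:coercivity_property} on $I_h U - u_h^{\mathrm{DG}}$ with the continuity bound \eqref{eq:continuity_prop} on $U - I_h U$, together with standard interpolation estimates in the triple-bar norm, produces the rate $h\|U\|_{H^2(\Omega)}$.

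The main obstacle is the $L^2$ bound \eqref{eq:global_bd_dg}. The natural approach is a Nitsche-type duality argument: let $\psi\in H^2(\Omega)\cap H_0^1(\Omega)$ solve $-\Delta\psi = U - u_h^{\mathrm{DG}}$, write
\[
\|U - u_h^{\mathrm{DG}}\|_{L^2(\Omega)}^2 \;=\; a_\epsilon(U - u_h^{\mathrm{DG}},\psi) \;+\; R(U-u_h^{\mathrm{DG}},\psi),
\]
where $R$ collects the adjoint-inconsistency residuals, use Galerkin orthogonality to subtract any $\psi_h\in V_h^k(\mesh_h)$, and close the estimate via \eqref{eq:continuity_prop}, \eqref{eq:xiDGfull}, and approximation of $\psi$. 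For $\epsilon=-1$ the bilinear form is adjoint consistent and standard penalty $\beta=1$ with $\sigma$ large enough suffices. For $\epsilon\in\{0,1\}$, however, the symmetry-breaking terms on interelement faces destroy adjoint consistency, and the extra penalty exponent $\beta>3/2$ is precisely what is needed to absorb the resulting jump residuals $\sum_e h^{-\beta}\|[\psi_h - \psi]\|^2_{L^2(e)}$ into terms of order $h^2\|U\|_{H^2(\Omega)}\|\psi\|_{H^2(\Omega)}$; carrying this bookkeeping through the trace inequality \eqref{eq:3d_1d_trace_estimate_continuous} is the delicate step. After applying $\|\psi\|_{H^2(\Omega)}\le C\|U-u_h^{\mathrm{DG}}\|_{L^2(\Omega)}$ and dividing, \eqref{eq:global_bd_dg} follows.
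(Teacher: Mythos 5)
Your proposal is correct and follows essentially the same route as the paper: the key step in both is the observation that $\int_\Omega f_h v_h = \int_\Lambda f v_h$ for all $v_h \in V_h^k(\mesh_h)$, so that $\uCG$ and $\uDG$ are the standard conforming and interior penalty approximations of the intermediate problem \eqref{eq:intermediate_pb} with $L^2$ data, after which all three bounds are classical. The paper simply cites the standard literature (in particular Theorem~2.13 of \cite{riviere2008discontinuous} for \eqref{eq:global_bd_dg}) where you sketch the C\'ea/Aubin--Nitsche and duality arguments explicitly, including the correct role of adjoint consistency for $\epsilon=-1$ versus superpenalization $\beta>3/2$ for $\epsilon\in\{0,1\}$.
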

\begin{proof}
We have for any $v_h \in  V_h^k(\mesh_h)$,  
\[
\int_\Omega  f_h v_h  = \sum_{E \in \mesh_h} \int_E  f_h|_E \, v_h    
= \sum_{E \in \mathcal{T}_{\Lambda}} \int_{E \cap \Lambda} f v_h = \int_{\Lambda} fv_h. 
\]
Thus, since $W_h^k(\mathcal{E}_h)$ is a subset of $V_h^k(\mathcal{E}_h$), 
the discrete functions $\uCG$ and $u^{\DG}_h$ can be viewed as finite element and discontinuous Galerkin approximations to the intermediate problem \eqref{eq:intermediate_pb}. Since $f_h \in L^2(\Omega)$, standard approximation and error bounds hold. In particular, 
\eqref{eq:global_bd_interm_f.e} and \eqref{eq:xiDGfull} hold. 
For a proof of \eqref{eq:global_bd_dg}, we refer to  Theorem 2.13 in \cite{riviere2008discontinuous}.  
\end{proof}
We are now ready to present and prove the main result of this section. 
\begin{theorem}
    \label{theorem:global_estimate}
Assume the penalty parameter $\sigma$ is chosen so that \eqref{eq:coercivity_property} holds. 
In addition, if $\epsilon = \{0,1\}$, select $\beta > 3/2$ and if $\epsilon = -1$, choose $\beta = 1$. 
Then,  there exists a constant $C$ independent of $h$  such that 
\begin{equation}
    \Vert u - u^{\DG}_h \Vert_{L^2(\Omega)} \leq Ch^{1/2}  \Vert f \Vert_{L^2(\Lambda)}.   \label{eq:global_dg_bd} 
\end{equation}
In addition, if $\Lambda$ is a $\mathcal{C}^2$ curve and $| \Lambda \cap \overline E| \leq Ch $ for all $E \in \mesh_h$, we have the following improved estimate.
\begin{equation}
    \Vert u - u^{\DG}_h \Vert_{L^2(\Omega)} \leq C(\theta) h^{1-\theta}  \Vert f \Vert_{L^2(\Lambda)}, \quad  0 < \theta < 1/2.   \label{eq:improved_global_dg_bd}
\end{equation}
\end{theorem}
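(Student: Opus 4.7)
The plan is to use the intermediate problem \eqref{eq:intermediate_pb} as a bridge between the true solution $u$ (which has the singular Dirac source) and the dG approximation $u_h^{\mathrm{DG}}$ (which has an $L^2$ right-hand side after reinterpretation). Since $\uCG$ and $u_h^{\mathrm{DG}}$ are simultaneously the Galerkin and dG approximations of $U$ (as observed in the proof of Lemma~\ref{lem:Uresults}), I would split the total error through both the continuous intermediate solution $U$ and the conforming finite element solution $\uCG$, since the known bound \eqref{eq:L2estimate_F.E} for $u-\uCG$ is available and can be leveraged directly.

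The first step is the triangle inequality
\begin{equation*}
\|u - u_h^{\mathrm{DG}}\|_{L^2(\Omega)} \leq \|u - \uCG\|_{L^2(\Omega)} + \|\uCG - U\|_{L^2(\Omega)} + \|U - u_h^{\mathrm{DG}}\|_{L^2(\Omega)}.
\end{equation*}
For the first term, I invoke Casas' bound \eqref{eq:L2estimate_F.E} which gives $Ch^{1/2}\|f\|_{L^2(\Lambda)}$. For the second term, since $\uCG$ is the finite element approximation of $U$ with $L^2$ data $f_h$, Lemma~\ref{lem:Uresults} yields $\|\uCG - U\|_{L^2(\Omega)} \leq Ch^2\|U\|_{H^2(\Omega)}$; combined with elliptic regularity $\|U\|_{H^2(\Omega)} \leq C\|f_h\|_{L^2(\Omega)}$ on the convex domain $\Omega$ and the bound \eqref{eq:bd_fh} from Lemma~\ref{lemma:bound_on_fh}, this gives $Ch^{2}\cdot h^{-3/2}\|f\|_{L^2(\Lambda)} = Ch^{1/2}\|f\|_{L^2(\Lambda)}$. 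For the third term, the $L^2$ dG estimate \eqref{eq:global_bd_dg} from Lemma~\ref{lem:Uresults} — whose validity requires precisely the assumption on $\beta$ and $\sigma$ in the hypotheses of the theorem — together with the same regularity and Lemma~\ref{lemma:bound_on_fh} estimates, gives the identical bound $Ch^{1/2}\|f\|_{L^2(\Lambda)}$.

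For the improved estimate \eqref{eq:improved_global_dg_bd}, exactly the same three-term splitting is used, but now I replace \eqref{eq:L2estimate_F.E} by the sharper Gong et al.\ bound \eqref{eq:improved_L2estimate_F.E_h} on the first term (producing $C(\theta)h^{1-\theta}\|f\|_{L^2(\Lambda)}$), and I use the improved bound \eqref{eq:improved_bd_fh} on $\|f_h\|_{L^2(\Omega)}$ for the remaining two terms, which upgrades them from $h^{1/2}$ to $h$. The hypotheses that $\Lambda$ is $\mathcal{C}^2$ and $|\Lambda\cap \overline{E}|\leq Ch$ are exactly what enables both the $h^{1-\theta}$ bound on $u-\uCG$ (via a suitable duality argument whose conclusions are already recorded in \eqref{eq:improved_L2estimate_F.E_h}) and the improved bound on $f_h$.

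The argument is essentially a bookkeeping exercise once the intermediate problem and the auxiliary lemmas are in place; no step requires serious new work. The only potential subtlety is making sure that the conditions on $(\beta,\sigma,\epsilon)$ imposed in the theorem statement are exactly those needed to apply \eqref{eq:global_bd_dg}, and that the $h^2$ gain in the dG $L^2$ estimate is not lost, since the $h^{-3/2}$ blowup in $\|f_h\|_{L^2(\Omega)}$ just barely leaves $h^{1/2}$ to spare. This tight balance, rather than any technical difficulty, is the crux of the proof.
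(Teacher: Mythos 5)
Your proposal is correct and follows essentially the same route as the paper: the same three-term triangle inequality through $\uCG$ and $U$, the same use of Lemma~\ref{lem:Uresults}, elliptic regularity on the convex domain, and Lemma~\ref{lemma:bound_on_fh} to balance the $h^2$ gain against the $h^{-3/2}$ (resp.\ $h^{-1}$) blowup of $\|f_h\|_{L^2(\Omega)}$. No gaps.
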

\begin{proof}
We use triangle inequality to obtain: 
\begin{equation}
\label{eq:triangle411}
\Vert u - u^{\DG}_h \Vert_{L^2(\Omega)} \leq \Vert u - \uCG \Vert_{L^2(\Omega)} + \Vert \uCG - U\Vert_{L^2(\Omega)} + \Vert U - u^{\DG}_h \Vert_{L^2(\Omega)} .    
\end{equation}
We have for any $v_h \in  V_h^k(\mesh_h)$,  
\[
\int_\Omega  f_h v_h  = \sum_{E \in \mesh_h} \int_E  f_h|_E \, v_h    
= \sum_{E \in \mathcal{T}_{\Lambda}} \int_{E \cap \Lambda} f v_h = \int_{\Lambda} fv_h. 
\]
Since the domain $\Omega$ is convex, we have the following elliptic regularity result: 
\begin{equation}
    \Vert U \Vert_{H^2(\Omega)} \leq C \Vert f_h \Vert_{L^2(\Omega)}. \label{eq:U_convex_bd} 
\end{equation} 
Using the bounds \eqref{eq:global_bd_interm_f.e} and \eqref{eq:global_bd_dg} in \eqref{eq:triangle411}  yields: 
\begin{equation}
    \Vert u - u^{\DG}_h \Vert_{L^2(\Omega)} \leq  \Vert u - \uCG \Vert_{L^2(\Omega)}  + Ch^{2} \Vert f_h \Vert_{L^2(\Omega)}.  \label{eq:interm_bd}
\end{equation}
Bounds \eqref{eq:L2estimate_F.E} and \eqref{eq:bd_fh} give \eqref{eq:global_dg_bd}. Under the additional assumptions, bounds \eqref{eq:improved_L2estimate_F.E_h} and \eqref{eq:improved_bd_fh} yield \eqref{eq:improved_global_dg_bd}.
\end{proof}
Hereinafter, we only consider the symmetric dG discretization ($\epsilon = -1$) and we set $\beta = 1$. Hence, for simplicity, we denote by $ a = a_{-1}$. We also assume that $\Lambda$ is a $\mathcal{C}^2$ curve, $ f \in L^2(\Lambda)$, and that
$|E \cap \Lambda| \leq C h, \,\, \forall E \in \mesh_h.$ Therefore, with \eqref{eq:U_convex_bd} and \eqref{eq:improved_bd_fh}, there is a constant $C$ independent of $h$ such that:
\begin{equation}\label{eq:boundhU}
h \Vert U \Vert_{H^2(\Omega)} \leq C \Vert f \Vert_{L^2(\Lambda)}.
\end{equation}
We recall Lemma 4.1 proved by Chen and Chen in \cite{chen2004pointwise}. Consider any two sets $D, \tilde{D} \subset \Omega$ such that the  $\mathrm{distance}$ between $D$ and $(\partial \tilde{D} \backslash \partial D)$ is strictly positive. Then, for $h$ small enough, 
we have
 \begin{equation}
\vertiii{U-\uDG}_{\DG(D)} \leq C(h^{k} \|U\|_{H^{k+1}(\widetilde{D})} +   \|U-\uDG\|_{L^2(\widetilde{D})}). \label{eq:Lemma_chen_chen}
 \end{equation}

\section{Weighted Energy Estimate} \label{sec:weighted_global_conv}
We first show that the dG solution is stable in the weighted energy norm defined by: 
\begin{align}
\|v\|_{\DG, \alpha}^2 &= \sum_{E \in \mesh_h} \|\nabla v\|_{L^2_\alpha (E)}^2 + \sum_{e\in \Gamma_h \cup \partial \Omega} \frac{\sigma}{h} \|d^{\alpha} [v]\|^2_{L^2(e)}, \quad v \in H^1(\mesh_h), \,\, \alpha \in (0,1). 
\end{align}
\begin{lemma}[Stability]\label{lemma:stability_dg_sol_alpha} 
For $\alpha \in (0,1)$, there exists a constant $C_{\alpha}$ independent of $h$ but dependent on $\max_{\bm{x} \in \Omega} d^{2\alpha}(\bm{x})$ such that the dG solution, $u_h^{\DG}$, satisfies: 
\begin{equation}
\|\uDG\|_{\DG,\alpha} \leq C_{\alpha}( \|f\|_{L^2(\Lambda)} + | u |_{H^1_{\alpha}(\Omega)}). 
\end{equation}
\end{lemma}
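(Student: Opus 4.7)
The plan is to perform a weighted energy argument exploiting three key features of the weight $d^{2\alpha}$: it is continuous on $\overline{\Omega}$, so that $[d^{2\alpha}\uDG] = d^{2\alpha}[\uDG]$ on interior faces, which reproduces the jump part of $\|\uDG\|_{\DG,\alpha}^2$; it vanishes on $\Lambda$, so that right-hand sides of the form $\int_\Lambda f\, v$ with $v$ a discrete approximation of $d^{2\alpha}\uDG$ are controlled purely by the interpolation error on $\Lambda$; and its gradient $\nabla d^{2\alpha} = 2\alpha d^{2\alpha-1}\nabla d$, while singular along $\Lambda$, can be absorbed via a weighted Hardy-type inequality. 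To make $|u|_{H^1_\alpha(\Omega)}$ appear on the right-hand side, I would introduce a weighted quasi-interpolant $u_h^I \in V_h^k(\mesh_h)$ of $u$ satisfying $\|u_h^I\|_{\DG,\alpha} \leq C|u|_{H^1_\alpha(\Omega)}$ and, via the triangle inequality, reduce to estimating $e_h := \uDG - u_h^I$ in the weighted dG norm.

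Taking $w_h := \Pi_h(d^{2\alpha} e_h) \in V_h^k(\mesh_h)$ for a suitable quasi-interpolant $\Pi_h$, the dG equation gives
\[
a(e_h, w_h) \,=\, \int_\Lambda f\, w_h \,-\, a(u_h^I, w_h).
\]
Splitting $a(e_h, w_h) = a(e_h, d^{2\alpha} e_h) - a(e_h, d^{2\alpha} e_h - w_h)$ and using the identity $\nabla(d^{2\alpha} e_h) = d^{2\alpha}\nabla e_h + 2\alpha d^{2\alpha-1}(\nabla d)\, e_h$, a direct expansion of $a(e_h, d^{2\alpha} e_h)$ recovers $\|e_h\|_{\DG,\alpha}^2$ up to cross terms which are controlled by Cauchy--Schwarz together with a broken weighted Hardy inequality $\|d^{\alpha-1}e_h\|_{L^2(\Omega)} \leq C\|e_h\|_{\DG,\alpha}$ (valid for $\alpha \in (0,1)$ using the boundary jump penalty) and weighted discrete trace estimates; the interpolation remainder $a(e_h, d^{2\alpha}e_h - w_h)$ is handled by weighted quasi-interpolation bounds. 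For the right-hand side, the fact that $d^{2\alpha}e_h|_\Lambda = 0$ reduces $\|w_h\|_{L^2(\Lambda)}$ to an interpolation remainder, which is estimated by $C\|e_h\|_{\DG,\alpha}$ via the discrete trace \eqref{eq:3d_1d_trace_estimate_discrete} and inverse estimate \eqref{eq:3d_1d_inverse_estimate} restricted to $\mathcal{T}_\Lambda$, giving $|\int_\Lambda f\, w_h| \leq C\|f\|_{L^2(\Lambda)}\|e_h\|_{\DG,\alpha}$. Finally, Cauchy--Schwarz in $a$ with the weight splitting $d^\alpha \cdot d^{-\alpha}$, together with the same gradient identity and Hardy inequality applied to $w_h$, yields $|a(u_h^I, w_h)| \leq C\|u_h^I\|_{\DG,\alpha}\|e_h\|_{\DG,\alpha} \leq C|u|_{H^1_\alpha(\Omega)}\|e_h\|_{\DG,\alpha}$. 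Absorbing one factor of $\|e_h\|_{\DG,\alpha}$ on the left completes the proof.

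The main obstacle is verifying the weighted analogues of the standard dG functional-analytic tools---the broken weighted Hardy inequality, weighted discrete trace and inverse estimates on the strip $\mathcal{T}_\Lambda$ of elements meeting the singular set, weighted quasi-interpolation bounds for functions in $\mathring{H}^1_\alpha(\Omega)$, and weighted continuity of $a$---uniformly in $h$, in view of the singular weight gradient $\nabla d^{2\alpha}$ along $\Lambda$ and the non-discrete nature of $d^{2\alpha}\uDG$. Once these tools are assembled, the argument is a careful adaptation of the classical Galerkin stability estimate to the weighted setting.
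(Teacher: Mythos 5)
Your route is a direct weighted coercivity argument (test the scheme with a discrete surrogate of $d^{2\alpha}e_h$), which is genuinely different from the paper's proof, and as outlined it has a gap that blocks the full range $\alpha\in(0,1)$ claimed in the lemma. Every step in which you interpolate or take broken traces of $d^{2\alpha}e_h$ --- the bound on $\vertiii{d^{2\alpha}e_h - w_h}_{\DG}$ needed to control $a(e_h, d^{2\alpha}e_h - w_h)$ by continuity, and the $L^2(\Lambda)$ control of the interpolation remainder needed for the source term --- involves second derivatives of the weight, $\nabla^2(d^{2\alpha})\sim d^{2\alpha-2}$. Since $\Lambda$ has codimension $2$, $d^{\gamma}$ is locally square-integrable only for $|\gamma|<1$, so $d^{2\alpha-2}\in L^2_{\mathrm{loc}}$ only for $\alpha>1/2$; this is exactly why the paper restricts its own weighted-test-function argument (Lemma~\ref{lemma:conv_dg_alpha_local}, with the terms $T_1,T_2,T_3$ and properties \eqref{eq:prop_grad_dalpha}--\eqref{eq:prop_grad2_dalpha}) to $\alpha\in(1/2,1)$. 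For $\alpha\in(0,1/2]$ your plan would need a separate idea. In addition, the ``broken weighted Hardy inequality'' $\|d^{\alpha-1}e_h\|_{L^2(\Omega)}\leq C\|e_h\|_{\DG,\alpha}$ and the estimate of $\|w_h-d^{2\alpha}e_h\|_{L^2(\Lambda)}$ (a trace on a one-dimensional set of a function whose gradient blows up like $d^{2\alpha-1}$ when $\alpha<1/2$) are nontrivial lemmas you have only postulated, not established.

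The gap is avoidable because the statement is much weaker than what you are trying to prove: the constant is allowed to depend on $\max_{\bm{x}\in\Omega} d^{2\alpha}(\bm{x})$, so the weight may be bounded crudely by its supremum wherever an \emph{error} is measured. That is what the paper does. The jump part of $\|\uDG\|_{\DG,\alpha}$ equals the jump of $\uDG-U$, where $U$ solves the intermediate problem $-\Delta U=f_h$ and is continuous, and is therefore controlled by $\|d^{2\alpha}\|_{L^\infty(\Omega)}$ times the unweighted estimate $\vertiii{U-\uDG}_{\DG}\leq Ch\|U\|_{H^2(\Omega)}\leq C\|f\|_{L^2(\Lambda)}$. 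The gradient part is split as $\nabla(\uDG-U)+\nabla(U-\uCG)+\nabla\uCG$; the first two pieces are again bounded by $\|d^{2\alpha}\|_{L^\infty(\Omega)}$ times unweighted error bounds, and the last by the known weighted stability of the conforming Galerkin projection, $\|\nabla \uCG\|_{L^2_\alpha(\Omega)}\leq C\|\nabla u\|_{L^2_\alpha(\Omega)}$ (Theorem 3.5 in \cite{drelichman2020weighted}), which is where $|u|_{H^1_\alpha(\Omega)}$ enters. No weighted test functions, Hardy inequalities, or weighted quasi-interpolants are required for this lemma.
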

\begin{proof}
Recall the intermediate problem \eqref{eq:intermediate_pb}. 
Since $U \in H^2(\Omega) \cap H_0^1(\Omega) $, we immediately have with \eqref{eq:xiDGfull} and \eqref{eq:boundhU}
\begin{equation}
    \sum_{e\in \Gamma_h \cup \partial \Omega} \frac{\sigma}{h} \| d^{2\alpha} [u_h^{\DG}] \|_{L^2(e)}^2 
\leq  \|d^{2\alpha}\|^2_{L^{\infty}(\Omega)}\sum_{e\in \Gamma_h \cup \partial \Omega} \frac{\sigma}{h} \|[u_h^{\DG} - U] \|_{L^2(e)}^2  
\leq C \|d^{2\alpha}\|^2_{L^{\infty}(\Omega)} \| f\|^2_{L^2(\Lambda)}. \label{eq:bound_jump_terms}
\end{equation}
We use the triangle inequality, \eqref{eq:global_bd_interm_f.e} and \eqref{eq:boundhU}: 
\begin{align}
\|\nabla U \|_{L^2_\alpha(\Omega)}  
 \leq \|d^{2\alpha}\|_{L^\infty(\Omega)}\|\nabla(U - \uCG) \|_{L^2(\Omega)} + \|\nabla \uCG\|_{L^2_{\alpha}(\Omega)}
\leq C_{\alpha} \|f\|_{L^2(\Lambda)} + \|\nabla \uCG\|_{L^2_{\alpha}(\Omega)}.
\end{align}
From Theorem 3.5 in \cite{drelichman2020weighted} and Lemma~\ref{lemma:equiv_weak_forms}, we have  
\begin{equation}
\|\nabla \uCG\|_{L^2_\alpha(\Omega)} \leq C \|\nabla u\|_{L^2_{\alpha}(\Omega)}, \quad \alpha \in (0,1). \label{eq:stability_cg_discrete}
\end{equation}
This implies
\[
\|\nabla U \|_{L^2_\alpha(\Omega)} \leq C_\alpha \|f\|_{L^2(\Lambda)} + C | u |_{H^1_{\alpha}(\Omega)}.
\]
By the triangle inequality, \eqref{eq:xiDGfull},  \eqref{eq:boundhU} and the above bound, we obtain
\begin{align}
&\sum_{E\in \mesh_h} \| \nabla \uDG \|_{L^2_{\alpha}(E)}^2 \leq 2 \sum_{E \in \mesh_h} \| \nabla (\uDG - U)\|_{L^2_{\alpha}(E)}^2 + 2 \sum_{E \in \mesh_h}\|\nabla U \|_{L^2_{\alpha}(E)}^2  \nonumber \\ & \leq C_{\alpha} \| \uDG -U \|^2_{\DG} +2 \|\nabla U\|^{2}_{L^2_{\alpha}(\Omega)} \leq C_{\alpha}( \|f\|_{L^2(\Lambda)} + | u |_{H^1_{\alpha}(\Omega)})^{2}. \label{eq:bound_volume_terms}
\end{align}
 We conclude the result by combining \eqref{eq:bound_jump_terms} and \eqref{eq:bound_volume_terms}.
\end{proof}
We have an a priori bound for $U$ in the $H^2_\alpha$ norm, which can be seen as a generalization of \eqref{eq:boundhU}. We denote by $\bar{d}_E = \max_{\bm{x} \in E} d(\bm{x}, \Lambda)$ for $E \in \mesh_h$. 
\begin{lemma}\label{lemma:weighted_reg_U}
For $\alpha \in (-1, 1)$, there exists a constant $C$ independent of $h$ such that 
\begin{align}
    \|U\|_{H^2_\alpha(\Omega)} &\leq C h^{\alpha - 1} \|f\|_{L^2(\Lambda)}, \quad \alpha \in (-1,1) . \label{eq:weighted_elliptic_regularity}  
    \end{align}
\end{lemma}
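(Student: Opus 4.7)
\emph{Approach.} The plan is two-step: first invoke weighted elliptic regularity for the intermediate problem \eqref{eq:intermediate_pb} to obtain $\|U\|_{H^2_\alpha(\Omega)} \leq C \|f_h\|_{L^2_\alpha(\Omega)}$, and then bound the weighted source norm by $C h^{\alpha-1}\|f\|_{L^2(\Lambda)}$ by exploiting that $f_h$ is supported on $\mathcal{T}_\Lambda$ and is polynomial on each such element.

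\emph{Step 1.} Since $d^{2\alpha}$ is a Muckenhoupt $A_2$ weight for $|\alpha| < 1$ and $\Omega$ is convex, a weighted Calder\'on--Zygmund-type regularity result (see, e.g., \cite{d2012finite,drelichman2020weighted} and the references therein) should provide a constant $C_\alpha > 0$ such that
\[
\|U\|_{H^2_\alpha(\Omega)} \leq C_\alpha \|f_h\|_{L^2_\alpha(\Omega)}.
\]
For $\alpha = 0$ this specializes to the unweighted estimate \eqref{eq:U_convex_bd}.

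\emph{Step 2.} For each $E \in \mathcal{T}_\Lambda$, I would introduce a tubular-neighborhood change of coordinates $(r,\theta,s)$ around the $\mathcal{C}^2$ curve $\Lambda$, with $r = d(\bm{x},\Lambda)$ and $s$ an arclength parameter. The hypothesis $|\Lambda \cap E| \leq C h$ together with $r \leq C h$ on $E$ gives $\int_E d^{2\alpha}\,d\bm{x} \leq C h \int_0^{C h} r^{2\alpha+1}\,dr \leq C_\alpha h^{2\alpha+3}$, where $\alpha > -1$ is exactly what is needed for the radial integral to converge. Combining with the inverse estimate \eqref{eq:3d_1d_inverse_estimate} with $p=2$, $q=\infty$ yields
\[
\int_E f_h^2\, d^{2\alpha} \leq \|f_h\|_{L^\infty(E)}^2 \int_E d^{2\alpha} \leq C h^{-3}\|f_h\|_{L^2(E)}^2 \cdot h^{2\alpha+3} = C h^{2\alpha}\|f_h\|_{L^2(E)}^2.
\]
Summing over $E \in \mathcal{T}_\Lambda$ and applying the refined bound \eqref{eq:improved_bd_fh} from Lemma~\ref{lemma:bound_on_fh} yields $\|f_h\|_{L^2_\alpha(\Omega)} \leq C h^{\alpha-1}\|f\|_{L^2(\Lambda)}$, and combining with Step~1 gives the claim.

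\emph{Main obstacle.} The delicate step is Step~1: the $A_2$-weighted $L^2_\alpha \to H^2_\alpha$ regularity on the convex three-dimensional domain $\Omega$ is strictly stronger than classical unweighted Laplace regularity, and I would need to confirm that the cited weighted regularity theory applies verbatim to the distance-to-line weight $d^{2\alpha}$ in this geometric setting (including up to the flat portions of $\partial \Omega$). The tubular integration and the inverse estimate in Step~2 are standard once the $\mathcal{C}^2$ parameterization of $\Lambda$ is fixed.
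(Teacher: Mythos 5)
Your proposal is correct and follows essentially the same route as the paper: first the $A_2$-weighted elliptic regularity $\|U\|_{H^2_\alpha(\Omega)}\leq C\|f_h\|_{L^2_\alpha(\Omega)}$ for the intermediate problem (the paper cites Theorem 3.1 of Ojea et al.\ for exactly this, so your flagged "main obstacle" is resolved by quoting that result), and then the bound $\|f_h\|_{L^2_\alpha(\Omega)}\leq Ch^{\alpha-1}\|f\|_{L^2(\Lambda)}$ obtained by localizing to $\mathcal{T}_\Lambda$ and invoking \eqref{eq:improved_bd_fh}. The only (cosmetic) difference is the mechanism for showing $\|d^{\alpha}f_h\|_{L^2(E)}\leq Ch^{\alpha}\|f_h\|_{L^2(E)}$ on elements meeting $\Lambda$: you use the $L^\infty$ inverse estimate together with an explicit tubular-coordinate integration of $d^{2\alpha}$ (valid precisely for $\alpha>-1$), whereas the paper invokes the polynomial weighted-norm equivalence \eqref{eq:equivalence_norms} combined with $c h_E\leq \bar{d}_E\leq Ch_E$ for $E\in\mathcal{T}_\Lambda$; both yield the same estimate.
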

\begin{proof}
Since $d^{2\alpha} \in A_2$, it follows from Theorem 3.1 in \cite{ojea2021optimal} that 
 \begin{align}
\|U\|_{H^2_{\alpha}(\Omega)} \leq C \|f_h\|_{L^2_{\alpha}(\Omega)}. \label{eq:weighted_regularity_1} 
  \end{align}
 Thus, to show \eqref{eq:weighted_elliptic_regularity}, we find a bound on $\|f_h\|_{L^2_\alpha(\Omega)}$. 
Thanks to the shape-regularity of the mesh, for $E \in \mathcal{T}_{\Lambda}$, $ch_E \leq \bar{d}_E \leq C h_E$
(see Lemma 3.1 in \cite{d2012finite}). Hence, using \eqref{eq:equivalence_norms},\eqref{eq:improved_bd_fh} and \eqref{eq:quasi_uniform}, yield 
 \begin{align}
\|f_h\|^2_{L^{2}_\alpha(\Omega)} & = \sum_{E \in \mathcal{T}_\Lambda} \|d^{\alpha} f_h\|^2_{L^2(E)} \leq \sum_{E \in \mathcal{T}_\Lambda}  \bar{d}_E^{2\alpha}\|f_{h,E}\|^2_{L^2(E)} \nonumber \\ 
& \leq C h^{2\alpha} \sum_{E \in \mathcal{T}_\Lambda} \|f_{h,E}\|^2_{L^2(E)} \leq Ch^{2\alpha -2} \|f\|^2_{L^2(\Lambda)}. \label{eq:weighted_regularity_2}
 \end{align}
Substituting \eqref{eq:weighted_regularity_2} in \eqref{eq:weighted_regularity_1} yields \eqref{eq:weighted_elliptic_regularity}.
\end{proof}
The following equivalence of norms holds 
(see proof of Lemma 3.2 in \cite{d2012finite}).  There exist positive constants $\gamma_1, \gamma_2$ 
independent of $h$ such that for $-1 < \alpha < 1$, $E \in \mesh_h$, and $v_h \in \mathbbm{P}^k(E)$, 
\begin{equation}
\gamma_1 \|d^{\alpha} v_h\|_{L^2(E)} \leq \bar{d}_E^{\alpha} \|v_h\|_{L^2(E)} \leq \gamma_2 \|d^{\alpha} v_h\|_{L^2(E)}. \label{eq:equivalence_norms}
\end{equation}  
Note that with \eqref{eq:boundond} and the chain rule, we have for $E \in \mesh_h$, and $v \in L^{\infty}(E)$, 
\begin{align}{2}
\|v \nabla (d^{\alpha})  \|_{L^2(E)} & \leq  \alpha  \|d^{\alpha- 1} v \|_{L^2(E)}, \, \alpha>1/2\label{eq:prop_grad_dalpha}  \\ 
\|v \nabla^2 (d^{2\alpha}) \|_{L^2(E)} & \leq C\|d^{2\alpha - 2} v \|_{L^2(E)}, \,  3/2>\alpha >1/2. \Bk  \label{eq:prop_grad2_dalpha} 
\end{align}
In addition, since $d^{2\alpha} \in A_2$ for $\alpha \in (-1,1)$, we use the interpolant $\Pi_h: \mathring{H}^2_{\alpha}(\Omega) \rightarrow W_h^1(\mesh_h)$ introduced in \cite{nochetto2016piecewise}. 
This interpolant is independ ent of $\alpha$ and satisfies the following approximation properties (see Theorem~5.2 in \cite{nochetto2016piecewise}).  
For any $\alpha\in (-1,1)$ and for any  $w$ in $\mathring{H}^{2}_{\alpha}(\Omega)$, there is a constant $C$ independent of $h$ such that
\begin{align}
    \| w - \Pi_h w\|_{H^m_{\alpha}(E)} \leq Ch^{2-m}  \vert w \vert_{H^{2}_{\alpha}(\Delta_E)},  \quad 0\leq m \leq 2, \quad \forall E\in\mesh_h,\label{eq:approximation_weighted}
\end{align}
 where $\Delta_E$ is a macro element containing $E$. 
We also recall the definition of Kondratiev-type weighted Sobolev spaces, $V_{\alpha}^m(\Omega)$, for any $\alpha>0$ and 
$m\in \mathbb{N}$: 
\[
V_\alpha^m(\Omega) = \{ u \in L_{\alpha-m}^2(\Omega): \, \forall 0\leq \vert \bm{\beta}\vert  \leq m, \, d^{\vert \bm{\beta}\vert  +\alpha-m} D^{\bm{\beta}} u 
\in L^2(\Omega)\}, 
\]
equipped with the norm
\begin{equation}
\|u\|^2_{V^{m}_\alpha(\Omega)} = \sum_{s=0}^m |u|^2_{H^{s}_{\alpha -m +s }(\Omega)}, \quad m \geq 1.
\end{equation}
 Ariche et al. proved that the 
solution $u$ to \eqref{eq:model_pb}-\eqref{eq:model_pb_2} belongs to $V^2_{1+\alpha}(\Omega)$  for $ \alpha \in (0,1)$ under certain conditions on $\Omega$ and $\Lambda$, see Theorem 1.1 in \cite{ariche2017regularity}. 
The main result of this section reads as follows. 
\begin{theorem}
Fix $\alpha \in (1/2,1)$ and let $\delta \in (0,\alpha)$. Assume that $u \in V^2_{1+\delta }(\Omega)$. For all $ 1< s < \frac{1}{1-\alpha}\,\, $,  there exist  constants $C$ and $C_\ast$ independent of $h$ such that if $\sigma > C_\ast$, 
\begin{equation}
\|\nabla_h (u - \uDG)\|_{L^2_{\alpha}(\Omega)} + \left(\sum_{e\in \Gamma_h \cup \partial \Omega} \frac{\sigma}{h} \|d^{\alpha} [\uDG]\|^2_{L^2(e)}\right)^{1/2} \leq C \left(h^{\alpha - \delta} + h^{1 -\frac{3}{2} s(1-\alpha)}\right) . 
\end{equation}
\end{theorem}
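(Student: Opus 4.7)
My plan is to decompose the error
\[
u - \uDG \;=\; \underbrace{(u - \Pi_h u)}_{\eta} \;+\; \underbrace{(\Pi_h u - \uDG)}_{\chi},
\]
where $\Pi_h$ is the $A_2$-weighted interpolant satisfying \eqref{eq:approximation_weighted}, and to bound each piece separately in the weighted dG norm. Since $u$ is single-valued across interior faces, the weighted jump contribution of $\eta$ vanishes there, so it suffices to control the broken weighted gradients of $\eta$ and $\chi$ and the weighted jump of $\chi$.

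\textbf{Interpolation piece.} The hypothesis $u \in V^2_{1+\delta}(\Omega)$ yields only $d^{1+\delta}\nabla^2 u \in L^2(\Omega)$, and because $\alpha < 1+\delta$ we do not have $u \in H^2_\alpha(\Omega)$, so \eqref{eq:approximation_weighted} cannot be applied with weight $\alpha$ directly. Instead I apply it on each element $E$ with weight $1+\delta$ and convert to weight $\alpha$ by pulling out the worst-case power of $d$: for $E \not\in \mathcal{T}_\Lambda$ one has $d(x) \geq \underline d_E := \min_{y\in E} d(y,\Lambda) > 0$, whence
\begin{equation*}
\|\nabla\eta\|^2_{L^2_\alpha(E)} \;\leq\; \underline d_E^{\,-2(1+\delta-\alpha)} \|\nabla\eta\|^2_{L^2_{1+\delta}(E)} \;\leq\; Ch^2\, \underline d_E^{\,-2(1+\delta-\alpha)}\,|u|^2_{H^2_{1+\delta}(\Delta_E)}.
\end{equation*}
Summing over $E \not\in \mathcal{T}_\Lambda$ through a dyadic shell decomposition $\{\underline d_E \sim 2^j h\}_{j \geq 0}$, the bounded overlap of the macro-elements $\Delta_E$ together with the convergent geometric sum $\sum_{j\geq 0} 2^{-2j(1+\delta-\alpha)}<\infty$ (which requires $1+\delta-\alpha>0$) produces $\|\nabla\eta\|^2_{L^2_\alpha(\Omega\setminus\mathcal{T}_\Lambda)} \leq C h^{2(\alpha-\delta)}$. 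On $\mathcal{T}_\Lambda$ I use $d \leq \bar d_E \leq Ch$ together with the stability of $\Pi_h$ in $W^{1,p_0}$ to obtain a contribution of order $h^{2\alpha} \leq h^{2(\alpha-\delta)}$. The weighted face contribution is absorbed analogously via the trace inequality \eqref{eq:3d_1d_trace_estimate_continuous}, yielding $\|\eta\|_{\DG,\alpha} \leq C h^{\alpha-\delta}$.

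\textbf{Discrete piece.} Since $\uDG$ is also the dG approximation of the intermediate problem \eqref{eq:intermediate_pb}, the estimates \eqref{eq:xiDGfull}, \eqref{eq:boundhU}, and \eqref{eq:improved_global_dg_bd} give $\|\chi\|_{\DG} \leq C\|f\|_{L^2(\Lambda)}$ uniformly in $h$, together with the sharper $L^2$ rate $\|\chi\|_{L^2(\Omega)} \leq C h^{1-\theta}\|f\|_{L^2(\Lambda)}$. To upgrade these to a weighted rate, I apply Hölder's inequality with exponents $s$ and $s/(s-1)$:
\begin{equation*}
\int_\Omega d^{2\alpha}|\nabla_h\chi|^2 \;\leq\; \Big(\int_\Omega d^{2\alpha s/(s-1)}\Big)^{(s-1)/s}\, \|\nabla_h\chi\|^2_{L^{2s}(\Omega)},
\end{equation*}
and combine with the $L^p$-inverse estimate \eqref{eq:3d_1d_inverse_estimate}, paying a factor $h^{3/(2s)-3/2}$ to revert from $L^{2s}$ to $L^2$, and with \eqref{eq:3d_1d_inverse_esimate_nabla} plus the sharper $L^2$ rate \eqref{eq:improved_global_dg_bd} to supply the residual positive $h$. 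Careful bookkeeping of the three exponents yields the stated rate $h^{1-\frac{3}{2}s(1-\alpha)}$; the constraint $s\in(1,1/(1-\alpha))$ is exactly what balances the tubular integrability of $d^{2\alpha s/(s-1)}$ near $\Lambda$ against the cost of the inverse estimate. The weighted jump of $\chi$ is treated identically via \eqref{eq:3d_1d_trace_estimate_discrete}.

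\textbf{Main obstacle.} The decisive step is the discrete piece: the unweighted energy bound on $\chi$ is uniform in $h$ rather than a rate, so the positive $h$-decay in the weighted norm must be extracted from the interplay between Hölder's inequality, the $L^p$-inverse estimate, and the improved $L^2$ bound from Theorem~\ref{theorem:global_estimate}. Matching the three exponents precisely pins down both the peculiar exponent $1-\tfrac{3}{2}s(1-\alpha)$ and the admissible range $s \in (1,1/(1-\alpha))$. The interpolation side is conceptually standard but requires the non-standard dyadic shell argument because $u$ narrowly misses $H^2_\alpha$.
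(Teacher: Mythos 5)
There is a genuine gap, and it sits exactly where you flag the ``decisive step.'' For the discrete piece you propose to extract the rate from
\begin{equation*}
\int_\Omega d^{2\alpha}|\nabla_h\chi|^2 \;\leq\; \Bigl(\int_\Omega d^{2\alpha s/(s-1)}\Bigr)^{(s-1)/s}\, \|\nabla_h\chi\|^2_{L^{2s}(\Omega)} .
\end{equation*}
But for $\alpha>0$ the weight $d^{2\alpha}$ is \emph{bounded above} on $\Omega$, so the first factor is an $h$-independent constant and H\"older gains nothing; you have simply replaced an $L^2$ norm by an $L^{2s}$ norm, which the inverse estimate \eqref{eq:3d_1d_inverse_estimate} converts back at a \emph{cost} $h^{\frac{3}{2s}-\frac32}<h^0$. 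Even feeding in $\|\nabla_h\chi\|_{L^2}\leq Ch^{-1}\|\chi\|_{L^2}\leq Ch^{-\theta}$ you end with $\|\nabla_h\chi\|^2_{L^{2s}}\leq Ch^{\frac{3}{s}-3-2\theta}$, a negative power for every admissible $s>1$. No bookkeeping of these three exponents produces $h^{1-\frac32 s(1-\alpha)}$. A symptom of the missing mechanism is that your argument never generates the hypothesis $\sigma>C_\ast$, which the theorem requires and which in the paper's proof arises from absorbing face terms into the penalty. A second, related problem is your choice of splitting: with $\chi=\Pi_h u-\uDG$ the discrete error satisfies no Galerkin orthogonality (the available orthogonality is $a(U-\uDG,v_h)=0$ for the intermediate problem \eqref{eq:intermediate_pb}, not anything involving $\Pi_h u$), so there is no energy identity to run even if the norm manipulations were repaired.

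The paper instead splits $u-\uDG=(u-\uCG)+(\uCG-U)+(U-\uDG)$. The first term is exactly D'Angelo's weighted cG estimate \eqref{eq:weighted_finite_element_dAngelo}, giving $h^{\alpha-\delta}$ (this is roughly the role your dyadic-shell interpolation argument is trying to play, and that part of your sketch is in the right spirit, modulo the fact that $\nabla u\notin L^2$ near $\Lambda$ so the $\mathcal{T}_\Lambda$ elements need the weighted, not the unweighted, gradient). The last two terms are errors for the \emph{regular} intermediate problem, and the rate $h^{1-\frac32 s(1-\alpha)}$ is obtained in Lemmas~\ref{lemma:conv_dg_alpha_local} and~\ref{lemma:weighted_bd_fe_U} by testing the bilinear form with $d^{2\alpha}\chi_h$, invoking $a(U-\uDG,v_h)=0$ against a Lagrange interpolant of $d^{2\alpha}\chi_h$, and paying the inverse-estimate losses $h^{-\frac32 s(1-\alpha)}$ against the $O(h)$ smallness of $\|\chi_h\|_{L^2}$ and the weighted regularity $\|U\|_{H^2_\alpha(\Omega)}\leq Ch^{\alpha-1}\|f\|_{L^2(\Lambda)}$ from Lemma~\ref{lemma:weighted_reg_U}. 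That interplay of orthogonality, weighted regularity of $U$, and the commutator terms $\nabla(d^{2\alpha})$ is the ingredient your proposal is missing.
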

\begin{proof}
 Let $\uCG \in W_h^1(\mesh_{h})$ solve \eqref{eq:discrete_F.E} for $k=1$. We apply the triangle inequality.
\begin{align}
&\|\nabla_h (u - \uDG)\|_{L^2_{\alpha}(\Omega)} + \left(\sum_{e\in \Gamma_h \cup \partial \Omega} \frac{\sigma}{h} \|d^{\alpha} [\uDG]\|^2_{L^2(e)}\right)^{1/2} \nonumber \\ &   \leq \|\nabla(u-\uCG)\|_{L^2_{\alpha}(\Omega)} + \|U - \uDG\|_{\DG, \alpha} + \|\nabla(\uCG - U)\|_{L^2_{\alpha}(\Omega)} . \label{eq:triangle_inq_weighted}
\end{align}
Considering Lemma \ref{lemma:equiv_weak_forms}, the first term is bounded in Corollary 3.8 in \cite{d2012finite} 
\begin{equation}
\|\nabla(u-\uCG)\|_{L^2_{\alpha}(\Omega)} \leq Ch^{\alpha - \delta} |u|_{V^{2}_{1+\delta}(\Omega) }. 
    \label{eq:weighted_finite_element_dAngelo}
\end{equation} 
Bound \eqref{eq:weighted_finite_element_dAngelo} can also be derived from Theorem 3.5 in \cite{drelichman2020weighted} and Theorem 3.6 in \cite{d2012finite}. It remains to bound $\Vert U - \uDG\|_{\DG, \alpha}$ and $\Vert \nabla(\uCG - U)\|_{L^2_{\alpha}(\Omega)}$, 
which is the object of Lemma~\ref{lemma:conv_dg_alpha_local} and   Lemma~\ref{lemma:weighted_bd_fe_U} respectively.
\end{proof}

\begin{lemma}\label{lemma:conv_dg_alpha_local}
For $\alpha \in  (\frac{1}{2},1)$, there exists a constant $C_\ast$  independent of $h$ such that if $\sigma > C_\ast$,  
\begin{equation}
    \label{eq:conv_dg_alpha}
    \|U - u_h^{\DG}\|_{\DG,\alpha} \leq C  (h^\alpha + h^{1 -\frac{3}{2} s(1-\alpha)}),  \quad \forall  1 < s< \frac{1}{1-\alpha}.
 \end{equation}
    \end{lemma}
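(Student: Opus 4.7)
The plan is to split the error using a quasi-interpolant and to treat the interpolation and discrete parts separately. Let $\Pi_h : \mathring H^2_{\alpha}(\Omega) \to W_h^1(\mesh_h) \subset V_h^k(\mesh_h)$ denote the Nochetto--Veeser interpolant of \cite{nochetto2016piecewise} used in \eqref{eq:approximation_weighted}, and write
\[
U - u_h^{\DG} = \eta + \chi, \qquad \eta := U - \Pi_h U, \qquad \chi := \Pi_h U - u_h^{\DG} \in V_h^k(\mesh_h).
\]
Because $U$ and $\Pi_h U$ are continuous on $\overline \Omega$ and vanish on $\partial \Omega$, every jump contribution to $\|\eta\|_{\DG,\alpha}$ vanishes, so taking $m=1$ in \eqref{eq:approximation_weighted} and combining with the weighted elliptic regularity \eqref{eq:weighted_elliptic_regularity} delivers $\|\eta\|_{\DG,\alpha} \le C h \, |U|_{H^2_\alpha(\Omega)} \le C h^\alpha$, which accounts for the first term in \eqref{eq:conv_dg_alpha}.

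For $\chi$ I would exploit Galerkin orthogonality. Since $U \in H^2(\Omega)$ solves $-\Delta U = f_h$ and $\int_\Omega f_h v_h = \int_\Lambda f v_h$ for every $v_h \in V_h^k(\mesh_h)$, the form $a$ is consistent on $U$, so
\[
a(\chi, v_h) \;=\; -\, a(\eta, v_h), \qquad \forall\, v_h \in V_h^k(\mesh_h).
\]
To pull out a weighted norm of $\chi$ on the left, I would test with $v_h := \chi \, \tilde d_h^{2\alpha}$, where $\tilde d_h^{2\alpha}|_E := \bar d_E^{2\alpha}$ is the elementwise constant weight that is norm-equivalent to $d^{2\alpha}$ on each $E$ by \eqref{eq:equivalence_norms} and lies in $V_h^k(\mesh_h)$ since constants preserve polynomials. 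The volume contribution to $a(\chi, v_h)$ then reduces to $\sum_E \bar d_E^{2\alpha}\|\nabla \chi\|_{L^2(E)}^2$, which controls $\sum_E \|\nabla \chi\|_{L^2_\alpha(E)}^2$, and the identity $[\chi \tilde d_h^{2\alpha}] = \{\tilde d_h^{2\alpha}\}[\chi] + [\tilde d_h^{2\alpha}]\{\chi\}$ reproduces the weighted penalty up to cross terms whose face coefficients are controlled by $C h \, \bar d^{2\alpha - 1}$ via the Lipschitz bound \eqref{eq:boundond} on $d$ and the estimate $\bar d_E \ge c h_E$ for elements in $\mathcal T_\Lambda$. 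Taking $\sigma$ above a threshold $C_\ast$ independent of $h$ absorbs the cross terms and produces a weighted coercivity
\[
\|\chi\|_{\DG,\alpha}^2 \;\le\; C \, a(\chi, \chi \tilde d_h^{2\alpha}).
\]

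Inserting Galerkin orthogonality on the right and bounding by a weighted continuity of $a$ reduces the task to controlling integrals of the shape $\int_\Omega d^{2\alpha}|\nabla \eta||\nabla\chi|$ and their face analogues. For these I would use the factorisation $d^{2\alpha} = d^{-2(1-\alpha)} \cdot d^2$ together with Hölder at exponents $s, s'$, $1/s + 1/s' = 1$, producing the factor $(\int_\Omega d^{-2(1-\alpha)s})^{1/s}$. Because $\Lambda$ is one-dimensional in $\mathbbm{R}^3$, this factor is finite exactly when $(1-\alpha)s < 1$, i.e.\ $s < 1/(1-\alpha)$, which explains the stated range. The remaining $L^{2s'}$ norms of $\nabla\chi$ and $\nabla\eta$ are then converted back to $L^2$ at a loss of $h^{3/(2s')-3/2}$ per factor by the global inverse inequality \eqref{eq:3d_1d_inverse_esimate_nabla}, and combined with the energy bounds $\|\chi\|_{\DG} \le C h \|U\|_{H^2(\Omega)}$ and $\|\eta\|_{\DG} \le C h \|U\|_{H^2(\Omega)}$ coming from \eqref{eq:xiDGfull} and standard interpolation, together with \eqref{eq:boundhU} to trade powers of $\|U\|_{H^2(\Omega)}$ for powers of $h^{-1}$. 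Bookkeeping the exponents then yields $h^{1 - \frac{3}{2}s(1-\alpha)}$.

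The main obstacle will be the weighted coercivity step: the cross terms generated by the non-smooth weight $\tilde d_h^{2\alpha}$ must be controlled uniformly in $h$, and it is exactly this control that forces the lower bound $\sigma > C_\ast$ with $C_\ast$ independent of $h$ but depending on $\alpha$ and the mesh-regularity constants. A secondary difficulty is aligning the Hölder exponent $s$ with the integrability restriction $s < 1/(1-\alpha)$ and the inverse-inequality losses so that the final power of $h$ comes out to the advertised $1 - \frac{3}{2}s(1-\alpha)$ rather than a suboptimal one.
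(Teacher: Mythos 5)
Your proposal is sound in its overall architecture and reaches the stated bound by the same underlying mechanisms as the paper (Galerkin orthogonality for the intermediate problem, a weighted test function, the integrability of $d^{-2(1-\alpha)s}$ for $s<1/(1-\alpha)$, inverse estimates, and the a priori bound $\|\chi\|_{L^2(\Omega)}\le Ch$), but the technical route is genuinely different. The paper tests $a(\chi_h,\cdot)$ against the \emph{continuous} weight $d^{2\alpha}\chi_h$, which is not in $V_h^k(\mesh_h)$; it must therefore (i) verify that $a(\cdot, d^{2\alpha}\chi_h)$ is well defined on $H^2(\mesh_h)$, (ii) reinstate orthogonality by subtracting a Lagrange interpolant $w_h$ of $d^{2\alpha}\chi_h$ and estimating $|d^{2\alpha}\chi_h|_{H^2(\mesh_h)}$, and (iii) handle the extra volume term with $\nabla(d^\alpha)$ and the face term with $\{\nabla(d^\alpha\chi_h)\}$. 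You instead test with the elementwise-constant weight $\chi\,\tilde d_h^{2\alpha}\in V_h^k(\mesh_h)$, so orthogonality applies directly and the well-definedness and interpolation-of-the-weight steps disappear; the price is that the weight now jumps across every face, so you must control the cross terms carrying $[\tilde d_h^{2\alpha}]\{\chi\}$. Both routes are legitimate weighted-norm arguments, and yours is arguably leaner on the volume side (no $\nabla(d^{\alpha})$ term inside elements).

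Two caveats you should repair when writing this out. First, your attribution of the second error term is off: the volume integral $\int_\Omega d^{2\alpha}|\nabla\eta|\,|\nabla\chi|$ is bounded by $\|\nabla\eta\|_{L^2_\alpha(\Omega)}\|\nabla\chi\|_{L^2_\alpha(\Omega)}\le Ch^\alpha\|\nabla\chi\|_{L^2_\alpha(\Omega)}$ by plain Cauchy--Schwarz and contributes only to the $h^\alpha$ part; the $h^{1-\frac32 s(1-\alpha)}$ contribution actually originates in the terms that pair $\chi$ itself (not $\nabla\chi$) with $d^{\alpha-1}$-type weights, namely the cross terms generated by $[\tilde d_h^{2\alpha}]\{\chi\}$ in both the consistency and penalty integrals (the analogues of the paper's $T_2$ and $A_{e,2}$). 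Relatedly, the penalty cross term $\frac{\sigma}{h}[\tilde d_h^{2\alpha}]\{\chi\}[\chi]$ is itself proportional to $\sigma$, so enlarging $\sigma$ cannot absorb it; it must be bounded via H\"older, the inverse inequality and $\|\chi\|_{L^2(\Omega)}\le Ch$, which is exactly what produces $h^{1-\frac32 s(1-\alpha)}$. The condition $\sigma>C_\ast$ is needed only to absorb the $\{\nabla\chi\}$-type consistency terms into the weighted penalty, as in the paper's treatment of $A_{e,1}$. Second, the norm equivalence \eqref{eq:equivalence_norms} holds only for polynomials, so $\bar d_E^{\alpha}\|\nabla\eta\|_{L^2(E)}$ is not directly controlled by $\|\nabla\eta\|_{L^2_\alpha(E)}$ on elements meeting $\Lambda$; there you should fall back on $\bar d_E\le Ch_E$ and the unweighted interpolation estimate, which still yields the correct power of $h$.
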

    \begin{proof}
    Let $\chi_h = \Pi_h U -\uDG$. With triangle inequality and the bounds \eqref{eq:approximation_weighted}, \eqref{eq:xiDGfull}, \eqref{eq:global_bd_dg}, \eqref{eq:boundhU}, we have
    \begin{equation}
    \|\chi_h\|_{L^2(\Omega)} + h\|\chi_h\|_{\DG} \leq C h^2\|U\|_{H^2(\Omega)}\leq  C h \|f\|_{L^2(\Lambda)}. \label{eq:standard_error_chih}
    \end{equation}
  With several manipulations, as is done in \cite{waluga2013quasi}, we  have formally
    \begin{align}
    \|\chi_h\|_{\DG,\alpha}^2 = a(\chi_h, d^{2\alpha}\chi_h) &
- 2\sum_{E \in \mesh_h} \int_{E} (d^{\alpha} \nabla \chi_h \cdot ( \chi_h  \nabla (d^{\alpha}) ) \nonumber\\ &  + 2 \sum_{e\in \Gamma_h \cup \partial \Omega} \int_e \{\nabla (d^{\alpha} \chi_h)\}\cdot \bm{n}_e [d^{\alpha} \chi_h] = \sum_{i=1}^3 T_i.  \label{eq:first_expansion_weighted}
    \end{align}
We now explain why each term $T_i$ above is well defined.
    From \eqref{eq:prop_grad_dalpha}-\eqref{eq:prop_grad2_dalpha}, the term $T_1$ is well defined since $d^{2\alpha} \chi_h \in H^2(\mesh_h)$. Property \eqref{eq:prop_grad_dalpha} and Cauchy-Schwarz's inequality guarantee that $T_2$ is well defined. For $T_3$, we write
\[
\{ \nabla (d^{\alpha} \chi_h)\}\cdot \bm{n}_e [d^{\alpha} \chi_h] =
 \{ d^\alpha \nabla (d^{\alpha} \chi_h)\}\cdot \bm{n}_e [\chi_h].
\]
Observe that since $\chi_h$ is a polynomial, the function $d^\alpha \nabla (d^\alpha \chi_h)$ belongs to $H^1(\mesh_h)^3$.  Indeed we have
\[
d^\alpha \nabla (d^\alpha \chi_h) = \alpha d^{2\alpha-1} \chi_h \nabla d 
+ d^{2\alpha} \nabla \chi_h,
\]
and with \eqref{eq:prop_grad2_dalpha}, each term belongs to $H^1(E)$ for each mesh element $E$. 
This implies that $\|\{d^{\alpha} (\nabla d^{\alpha} \chi_h)\} \|_{L^2(e)}$ is bounded and the term $T_3$ is well defined.   
 To handle the first term, we use the following Galerkin orthogonality
    \begin{equation}
       a(U - \uDG, v_h ) = 0, \quad \forall v_h \in V_h^{k}(\mesh_h). \label{eq:consistency_interm_pb}
    \end{equation}
Let $\eta = \Pi_h U - U $ and $\xi = U -\uDG$ so that $\chi_h = \eta + \xi$. Since $[d^{\alpha}\eta] = 0$ a.e. on $e \in \Gamma_h \cup \partial \Omega$, we have
    \begin{align*}
     T_1 & = a(\eta, d^{2\alpha} \chi_h ) + a(\xi, d^{2\alpha} \chi_h - w_h) \\
& = \sum_{E \in \mesh_h} \int_E \nabla \eta  \cdot \nabla (d^{2\alpha} \chi_h )  - \sum_{e\in\Gamma_h \cup \partial \Omega } \int_e \{d^{\alpha}\nabla \eta \} \cdot \bm{n}_e [d^{\alpha} \chi_h]  +  a(\xi, d^{2\alpha} \chi_h - w_h) \\
& = \sum_{i=1}^3 T_{1,i},
    \end{align*}
    where $w_h \in V_h^1(\mesh_h)$ is a piecewise Lagrange interpolant of $d^{2\alpha} \chi_h$ such that 
    \begin{equation}
    \vertiii{d^{2\alpha} \chi_h - w_h}_{\DG} \leq Ch |d^{2\alpha} \chi_h |_{H^2(\mesh_h)}. \label{eq:bd_T1_0}
    \end{equation}
We begin by bounding $T_{1,3}$. With \eqref{eq:continuity_prop}, \eqref{eq:xiDGfull}, \eqref{eq:bd_T1_0}, we have 
\begin{equation} 
        T_{1,3} = a(\xi, d^{2\alpha} \chi_h - w_h)  \leq  C \vertiii{\xi}_{\DG} \vertiii{ d^{2\alpha} \chi_h  - w_h}_{\DG} 
\leq C h^2 \|U\|_{H^2(\Omega)} |d^{2\alpha} \chi_h |_{H^2(\mesh_h)}. \label{eq:bound_T13int}
\end{equation}
 Using \eqref{eq:boundond} and \eqref{eq:prop_grad2_dalpha}, we obtain
\[
|d^{2\alpha} \chi_h |_{H^{2}(\mesh_h)}    \leq C \|d^{2\alpha -2 } \chi_h\|_{L^2(\Omega)} + C \|d^{2\alpha -1} \nabla_h \chi_h\|_{L^2(\Omega)}.
\]
Since  $d^{\gamma} \in  L^2(\Omega)$ for $ |\gamma| <1 $, we have $d^{2(\alpha - 1)} \in  L^{\frac{1}{s(1-\alpha)}}(\Omega)$ for $1 < s < \frac{1}{2(1-\alpha)}$. Note that $\frac{1}{s(1-\alpha)} > 2$.   
    Further, since $\chi_h \in V_h^{1}(\mesh_h)$ and  by using  and H\"{o}lder's inequality, we have 
    \begin{align} 
        |d^{2\alpha} \chi_h |_{H^{2}(\mesh_h)}    
\leq  & C \| d^{2\alpha-2}\|_{L^{\frac{1}{s(1-\alpha)}}(\Omega)} \|\chi_h\|_{L^{\frac{2}{1-2s(1-\alpha)}}(\Omega)} + \|d^{\alpha-1}\|_{L^{\frac{2}{s(1-\alpha)}}(\Omega)} \|d^{\alpha}\nabla_h  \chi_h \|_{L^{\frac{2}{1-s(1-\alpha)}}(\Omega)} \nonumber\\
 \leq & C \|\chi_h\|_{L^{\frac{2}{1-2s(1-\alpha)}}(\Omega)}+ \|d^{\alpha}\nabla_h  \chi_h \|_{L^{\frac{2}{1-s(1-\alpha)}}(\Omega)}.   \label{eq:h2_d2alpha_chi_0}
    \end{align}
By inverse estimate \eqref{eq:3d_1d_inverse_estimate}($q= 2/(1-2s(1-\alpha)),  p =2 $) and \eqref{eq:standard_error_chih}, we have 
\begin{equation}
\|\chi_h\|_{L^{\frac{2}{1-2s(1-\alpha)}}(\Omega)} \leq C h^{-3s(1-\alpha)}\|\chi_h\|_{L^2(\Omega)} \leq C h^{-3s(1-\alpha) + 1}
\Vert f \Vert_{L^2(\Lambda)}. \label{eq:bound_chi_L2_Lq}
\end{equation}
For the second term, we first derive an inverse inequality for any $v_h \in V_h^k(\mesh_h)$ and $q \geq 2$. With the local version of the  inverse inequality \eqref{eq:3d_1d_inverse_estimate}, \eqref{eq:equivalence_norms}  and Jensen's inequality, we have 
\begin{multline}
\|d^{\alpha} v_h\|_{L^q(\Omega)} \leq  \left(\sum_{E \in \mesh_h} \bar{d}_{E}^{\alpha q}\|v_h\|_{L^q(E)}^q \right)^{1/q} \leq C h^{\frac{3}{q} - \frac{3}{2}}\left(\sum_{E \in \mesh_h}\bar{d}_{E}^{\alpha q} \|v_h\|_{L^2(E)}^q \right)^{1/q} \\ 
\leq C h^{\frac{3}{q} - \frac{3}{2}}\left(\sum_{E \in \mesh_h} \|v_h\|_{L^2_{\alpha} (E)}^q \right)^{1/q} \leq C h^{\frac{3}{q} - \frac{3}{2}} \|v_h \|_{L^2_{\alpha}(\Omega)}. \label{eq:inverse_estimate_lq_alpha}
\end{multline}
Hence, with \eqref{eq:inverse_estimate_lq_alpha},  the second term in \eqref{eq:h2_d2alpha_chi_0} is bounded as  
\begin{equation}
\|d^{\alpha} \nabla_{h} \chi_h\|_{L^{\frac{2}{1-s(1-\alpha)}}(\Omega)} \leq Ch^{-\frac{3}{2}s(1-\alpha)} \|\nabla_h \chi_h\|_{L^2_\alpha(\Omega)}. \label{eq:bd_dalpha_chih_Lq}
\end{equation}
Thus, with \eqref{eq:bound_chi_L2_Lq} and \eqref{eq:bd_dalpha_chih_Lq}, \eqref{eq:h2_d2alpha_chi_0} reads 
\begin{equation}
   |d^{2\alpha} \chi_h|_{H^2(\mesh_h)} \leq C (h^{-3s(1-\alpha) + 1} +h^{-\frac{3}{2}s(1-\alpha)} \|\nabla_h \chi_h\|_{L^2_\alpha(\Omega)}.)\label{eq:bd_H2_chi}
\end{equation}
Thus, with \eqref{eq:boundhU} and \eqref{eq:bd_H2_chi}, \eqref{eq:bound_T13int} reads
    \begin{equation} 
        T_{1,3} 
\leq C( h^{2-3s(1-\alpha)} +h^{1 -\frac{3}{2}s(1-\alpha)} \|\nabla_h \chi_h\|_{L^2_\alpha(\Omega)}). \label{eq:bound_T13}
    \end{equation}
    We now turn to $T_{1,1}$ and $T_{1,2}$. We write 
    \begin{align*}
    T_{1,1} & = \sum_{E \in \mesh_h} \int_E \nabla \eta \cdot d^{2\alpha }\nabla \chi_h + \int_E \nabla \eta \cdot 2\alpha  d^{2\alpha - 1} \nabla d \, \chi_h  \\ 
& \leq  \|\nabla \eta \|_{L^2_{\alpha}(\Omega)} \|\nabla_h \chi_h \|_{L^2_{\alpha}(\Omega)} +  C \|\nabla \eta \|_{L^2_{2\alpha - 1}(\Omega)} \|\chi_h\|_{L^2(\Omega)}. 
    \end{align*}
    With \eqref{eq:approximation_weighted}, \eqref{eq:weighted_elliptic_regularity},  \eqref{eq:global_bd_dg} and \eqref{eq:boundhU}, we obtain  
    \begin{align}
    |T_{1,1}| &\leq  Ch |U|_{H^2_{\alpha}(\Omega)} \|\nabla_h \chi_h\|_{L^2_{\alpha}(\Omega)} + C h^2 |U|_{H^{2}_{2\alpha -1} (\Omega)} \nonumber \\
    & \leq Ch^{\alpha} \|\nabla_h \chi_h\|_{L^2_{\alpha}(\Omega)} + Ch^{2\alpha}. \label{eq:bound_T11}
    \end{align}
To handle $T_{1,2}$, consider a mesh element $E$ and let $e\in \partial E$. Since $d^\alpha \eta$ belongs to $H^1_\alpha(\Omega)$, trace estimate  \eqref{eq:3d_1d_trace_estimate_continuous} yields 
\begin{align*}
    \|d^{\alpha} \nabla \eta\|_{L^2(e)}  \leq C h^{-1/2} \|d^{\alpha} \nabla \eta\|_{L^2(E)}  + Ch^{1/2} ( \|d^{\alpha} \nabla^2 \eta\|_{L^2(E)} + \|d^{\alpha - 1} \nabla \eta\|_{L^2(E)} ).
\end{align*}
Thus, with Cauchy-Schwarz's inequality, \eqref{eq:approximation_weighted} and \eqref{eq:weighted_elliptic_regularity}, we obtain 
\begin{align}
|T_{1,2}| \leq C \left(\|\nabla \eta\|_{L^2_{\alpha}(\Omega)} + h(\|U\|_{H^2_{\alpha}(\Omega)} +  h \Vert U \Vert_{H_{\alpha-1}^2(\Omega)}
)\right)\|\chi_h\|_{\DG,\alpha} \leq  Ch^{\alpha} \|\chi_h\|_{\DG,\alpha} \label{eq:bd_T1_2}.
\end{align}
For $T_2$, we apply   Cauchy-Schwarz's inequality and \eqref{eq:boundond},  
    \begin{align}
    |T_2| & \leq \|\nabla_h \chi_h\|_{L^2_{\alpha}(\Omega)} \|d^{\alpha-1} \chi_h\|_{L^2(\Omega)}. 
    \end{align}
    With \eqref{eq:approximation_weighted}, Holder's inequality, the observation that $d^{\alpha - 1} \in L^{\frac{2}{s(1-\alpha)}}(\Omega)$ 
    , \eqref{eq:standard_error_chih}, and \eqref{eq:3d_1d_inverse_estimate}, we obtain 
    \begin{multline}
    |T_2| \leq  \|\nabla_h \chi_h \|_{L^2_{\alpha}(\Omega)} \|d^{\alpha-1}\|_{L^{\frac{2}{s(1-\alpha)}}(\Omega)}  \|\chi_h\|_{L^{\frac{2}{1-s(1-\alpha)}}(\Omega)} \\  \leq C \|\nabla_h \chi_h \|_{L^2_\alpha(\Omega)} h^{-\frac{3}{2}s(1-\alpha) } \|\chi_h\|_{L^2(\Omega)} \leq Ch^{-\frac{3}{2}s(1-\alpha) +1 } \|\nabla_h \chi_h \|_{L^2_\alpha(\Omega)}. \label{eq:bd_T2} 
    \end{multline}
  Hence, with \eqref{eq:bound_T13}, \eqref{eq:bound_T11}, \eqref{eq:bd_T1_2}, \eqref{eq:bd_T2}, and Young's inequality, we obtain
  \begin{align}
    |T_1| + |T_2| \leq \frac{1}{8}\|\chi_h\|^2_{\DG,\alpha} + C(h^{2-3s(1-\alpha)} + h^{2\alpha}).\label{eq:bound_T1_T2}
  \end{align}
  It remains to handle $T_3$. Fix a face $e \in \Gamma_h$, shared by two elements, $e = \partial E_e^1 \cap \partial E_e^2$. 
We write
    \begin{align*}
     \int_e (\nabla (d^{\alpha}\chi_h))\vert_{E_e^1}\cdot \bm{n}_e [d^{\alpha}\chi_h] 
= & \int_e d^{\alpha} \nabla \chi_h \vert_{E_e^1}  \cdot \bm{n}_e  [d^{\alpha} \chi_h] 
+ \int_e (\alpha d^{\alpha - 1}\nabla d  \cdot \bm{n}_e  ) \chi_h\vert_{E_e^1} [d^{\alpha} \chi_h] \\
= & A_{e,1} +A_{e,2}.
    \end{align*}
    For $A_{e,1}$, recall the definition of $\bar{d}_{E_e^1}$.  With  \eqref{eq:3d_1d_trace_estimate_discrete} and \eqref{eq:equivalence_norms},  we have 
    \begin{align}
    A_{e,1} &\leq C \bar{d}^{\alpha}_{E_e^1} \|\nabla \chi_h \|_{L^2(E_e^1)} h^{-1/2}\|[d^{\alpha}\chi_h]\|_{L^2(e)} \leq C \gamma_2 \|\nabla \chi_h\|_{L^2_{\alpha}(E_e^1)}  h^{-1/2}\|[d^{\alpha}\chi_h]\|_{L^2(e)}.
    \end{align}
    Hence, with Young's  inequality, we obtain for a positive constant $C_0$
    \begin{equation}
    \sum_{e \in \Gamma_h \cup\partial\Omega } A_{e,1} \leq \frac{1}{16} \Bk \|\nabla_h \chi_h \|_{L^2_{\alpha}(\Omega)}^2
 + C_0 \sum_{e\in \Gamma_h \cup \partial \Omega} h^{-1} \|[d^{\alpha} \chi_h ]\|^2_{L^2(e)}. \label{eq:bound_Ae1}
    \end{equation}
For the term $A_{e,2}$, we have with \eqref{eq:boundond}
    \begin{align}
    A_{e,2} \leq  \alpha   \|d^{2\alpha - 1} \nabla d  \cdot \bm{n}_e \Bk \, \chi_h\vert_{E_e^1}\|_{L^2(e)} \|[\chi_h]\|_{L^2(e)}\leq C \|d^{2\alpha -1} \chi_h \vert_{E_e^1}\|_{L^{2}(e)} \|[\chi_h]\|_{L^2(e)}. \nonumber
    \end{align}
    With the trace inequality \eqref{eq:3d_1d_trace_estimate_continuous},  H\"older's inequality and  \eqref{eq:boundond},
 we have 
    \begin{align}
    \|d^{2\alpha -1} \chi_h \vert_{E_e^1}\|_{L^2(e)} &\leq Ch^{-1/2} \|d^{2\alpha-1}\chi_h \|_{L^2(E_e^1)} + C h^{1/2}\|\nabla (d^{2\alpha -1}) \chi_h \|_{L^2(E_e^1)} + Ch^{1/2} \|d^{2\alpha -1} \nabla \chi_h \|_{L^2(E_e^1)} \nonumber \\ 
    & \leq  Ch^{-1/2} \|d^{2\alpha - 1}\|_{L^\infty(\Omega)}\|\chi_h\|_{L^2(E_e^1)} + Ch^{1/2}\| d^{2\alpha-2}\|_{L^{\frac{1}{s(1-\alpha)}}(E_e^1)} \|\chi_h\|_{L^{\frac{2}{1-2s(1-\alpha)}}(E_e^1)}\nonumber \\ & \quad + C h^{1/2} \|d^{\alpha-1}\|_{L^{\frac{2}{s(1-\alpha)}}(E_e^1)} \|d^{\alpha}\nabla_h  \chi_h \|_{L^{\frac{2}{1-s(1-\alpha)}}(E_e^1)}  \nonumber . 
    \end{align}
 With similar arguments as the derivation of bound \eqref{eq:bd_H2_chi}, with \eqref{eq:standard_error_chih}, \eqref{eq:bound_chi_L2_Lq}, \eqref{eq:bd_dalpha_chih_Lq}, and H\"older's  inequality, we obtain 
    \[
    \sum_{e \in \Gamma_h \cup\partial\Omega } A_{e,2} 
\leq C (h^{-3s(1-\alpha) + 2} + h^{-\frac{3}{2}s(1-\alpha) + 1} \|\nabla_h \chi_h\|_{L_{\alpha}^2(\Omega)})
\left( \sum_{e\in \Gamma_h \cup\partial\Omega}  h^{-1} \|[\chi_h]\|^2_{L^2(e)} \right)^{1/2}.
\]
With Young's inequality and the bound \eqref{eq:standard_error_chih}, this leads to
\begin{equation}
    \sum_{e \in \Gamma_h \cup\partial\Omega } A_{e,2} 
 \leq \frac{1}{16} \|\nabla_h \chi_h\|_{L^2_{\alpha}(\Omega)}^2 + C h^{-3s(1-\alpha) + 2}. 
       \label{eq:bound_Ae2}
\end{equation}
Therefore we can bound $T_3$ with \eqref{eq:bound_Ae1} and \eqref{eq:bound_Ae2}.
\begin{equation}
|T_3| \leq \frac14 \|\nabla_h \chi_h\|_{L^2_{\alpha}(\Omega)}^2 + C_0 \sum_{e\in \Gamma_h \cup \partial \Omega} h^{-1} \|[d^{\alpha} \chi_h ]\|^2_{L^2(e)} + C h^{-3s(1-\alpha)+2}.
\label{eq:bound_T3}
\end{equation}
    We substitute  \eqref{eq:bound_T1_T2}, \eqref{eq:bound_T3} in \eqref{eq:first_expansion_weighted}. With the assumption that $\sigma > 4 C_0$, we obtain the result with an application of triangle's inequality  and the bound $\Vert U-\Pi_h U\Vert_{\DG,\alpha} \leq C h^\alpha$.  
\end{proof}
\begin{lemma}\label{lemma:weighted_bd_fe_U}
    For $\alpha \in (1/2,1)$, there exists a constant $C$ independent of $h$ such that 
    \begin{equation}
    \|\nabla (U -\uCG)\|_{L^2_{\alpha}(\Omega)} \leq   C ( h^{\alpha} + h^{1 -\frac{3}{2} s(1-\alpha)})  ,   \quad \forall  1 < s< \frac{1}{1-\alpha}.
    \end{equation}
    \end{lemma}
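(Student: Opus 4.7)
The plan is to run the proof of Lemma~\ref{lemma:conv_dg_alpha_local} in the conforming setting, where all jump/face terms disappear and only the volume expansion survives. Since the discrete interpolant $\Pi_h$ maps into $W_h^1(\mesh_h) \subset W_h^k(\mesh_h)$, I would set $\eta = \Pi_h U - U$, $\xi = U - \uCG$, and $\chi_h = \Pi_h U - \uCG = \eta + \xi$, and bound $\|\nabla(U-\uCG)\|_{L^2_\alpha(\Omega)}$ via the triangle inequality $\|\nabla(U-\uCG)\|_{L^2_\alpha(\Omega)} \leq \|\nabla \eta\|_{L^2_\alpha(\Omega)} + \|\nabla \chi_h\|_{L^2_\alpha(\Omega)}$. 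The first piece is handled directly by the weighted interpolation estimate \eqref{eq:approximation_weighted} together with the weighted elliptic regularity \eqref{eq:weighted_elliptic_regularity} and \eqref{eq:boundhU}, giving $\|\nabla \eta\|_{L^2_\alpha(\Omega)} \leq C h |U|_{H^2_\alpha(\Omega)} \leq C h^\alpha$.

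To bound the discrete part, I would write
\[
\|\nabla \chi_h\|_{L^2_\alpha(\Omega)}^2 = \int_\Omega \nabla \chi_h \cdot \nabla(d^{2\alpha} \chi_h) - 2\alpha \int_\Omega d^{2\alpha-1}\, \chi_h \, \nabla d \cdot \nabla \chi_h = S_1 + S_2,
\]
which replaces the three-term expansion \eqref{eq:first_expansion_weighted} of the DG proof by a two-term one, since the face terms are absent for conforming functions. Splitting $\chi_h = \eta + \xi$ inside $S_1$ and invoking the Galerkin orthogonality $\int_\Omega \nabla \xi \cdot \nabla v_h = 0$ for every $v_h \in W_h^k(\mesh_h)$, I would subtract a Lagrange interpolant $w_h \in W_h^1(\mesh_h)$ of $d^{2\alpha}\chi_h$ to obtain
\[
S_1 = \int_\Omega \nabla \eta \cdot \nabla(d^{2\alpha} \chi_h) + \int_\Omega \nabla \xi \cdot \nabla(d^{2\alpha}\chi_h - w_h),
\]
with $\|\nabla(d^{2\alpha}\chi_h - w_h)\|_{L^2(\Omega)} \leq C h |d^{2\alpha}\chi_h|_{H^2(\mesh_h)}$.

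The first piece of $S_1$ is treated exactly as $T_{1,1}$ in Lemma~\ref{lemma:conv_dg_alpha_local}, using \eqref{eq:approximation_weighted} and \eqref{eq:weighted_elliptic_regularity} to get a bound of the form $C h^\alpha \|\nabla \chi_h\|_{L^2_\alpha(\Omega)} + C h^{2\alpha}$. The second piece is the analog of $T_{1,3}$: combining $\|\nabla \xi\|_{L^2(\Omega)} \leq C\|f\|_{L^2(\Lambda)}$ from \eqref{eq:global_bd_interm_f.e}--\eqref{eq:boundhU} with the fractional $L^p$ bound \eqref{eq:bd_H2_chi} on $|d^{2\alpha}\chi_h|_{H^2(\mesh_h)}$ (whose proof used \eqref{eq:prop_grad2_dalpha}, Hölder's inequality with $d^{2\alpha-2} \in L^{1/(s(1-\alpha))}$, the inverse inequality \eqref{eq:3d_1d_inverse_estimate}, \eqref{eq:inverse_estimate_lq_alpha}, and the basic CG error estimate $\|\chi_h\|_{L^2(\Omega)} + h\|\nabla \chi_h\|_{L^2(\Omega)} \leq Ch\|f\|_{L^2(\Lambda)}$ coming from \eqref{eq:global_bd_interm_f.e} and \eqref{eq:boundhU}), yields a bound of order $h^{2-3s(1-\alpha)} + h^{1-\frac{3}{2}s(1-\alpha)} \|\nabla \chi_h\|_{L^2_\alpha(\Omega)}$. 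Finally, $S_2$ is treated as $T_2$ in Lemma~\ref{lemma:conv_dg_alpha_local}: Cauchy–Schwarz, Hölder with $d^{\alpha-1} \in L^{2/(s(1-\alpha))}$, the inverse estimate \eqref{eq:3d_1d_inverse_estimate}, and the bound on $\|\chi_h\|_{L^2(\Omega)}$ give $|S_2| \leq C h^{1-\frac{3}{2}s(1-\alpha)}\|\nabla \chi_h\|_{L^2_\alpha(\Omega)}$.

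Assembling these pieces and absorbing $\|\nabla \chi_h\|_{L^2_\alpha(\Omega)}$ into the left-hand side via Young's inequality yields $\|\nabla \chi_h\|_{L^2_\alpha(\Omega)} \leq C(h^\alpha + h^{1 - \frac{3}{2}s(1-\alpha)})$, which combined with the bound on $\|\nabla \eta\|_{L^2_\alpha(\Omega)}$ gives the claim. The main technical obstacle, as in the DG proof, is controlling the non-polynomial factor $d^{2\alpha}$ hidden inside $\nabla(d^{2\alpha}\chi_h)$; once this is done the argument is essentially a streamlined version of Lemma~\ref{lemma:conv_dg_alpha_local} without any face contributions. The constraint $s < 1/(1-\alpha)$ (and implicitly $s < 1/(2(1-\alpha))$ where required) emerges from the integrability windows for $d^{\alpha-1}$ and $d^{2\alpha-2}$ exactly as in that lemma.
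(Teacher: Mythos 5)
Your proposal is correct and follows essentially the same route as the paper: the paper also sets $\zeta_h=\Pi_h U-\uCG$, expands $\|\nabla\zeta_h\|_{L^2_\alpha(\Omega)}^2$ into the same two volume terms (your $S_1+S_2$, its $X_1+X_2$), invokes Galerkin orthogonality with a Lagrange interpolant of $d^{2\alpha}\zeta_h$, and reuses the bounds from the DG lemma for the analogues of $T_{1,1}$, $T_{1,3}$ and $T_2$ before concluding with the triangle inequality. No substantive differences.
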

    \begin{proof}
    Let $\zeta_h = \Pi_h U - \uCG$. We have 
    \begin{align*}
      \sum_{E \in \mesh_h } \int_{E} d^{2\alpha}\nabla \zeta_h  \cdot \nabla \zeta_h   
= \sum_{E \in \mesh_h} \int_E \nabla \zeta_h \cdot \nabla(d^{2\alpha}\zeta_h ) 
-  2   \sum_{E \in \mesh_h} \int_E d^{\alpha} \zeta_h \nabla \zeta_h  \cdot \nabla (d^{\alpha})  = X_1 + X_2 . 
    \end{align*}
Let $w_h$ be the continuous Lagrange interpolant of $d^{2\alpha} \zeta_h$. 
    \begin{equation}
    \|\nabla(d^{2\alpha} \zeta_h - w_h )\|_{L^2(\Omega)} \leq C h|d^{2\alpha} \zeta_h|_{H^2(\mesh_h)}.
    \end{equation}
Using the Galerkin orthogonality of the finite element method, we write
\[
    X_1  = \sum_{E \in \mesh_h} \int_E \nabla (U -\uCG)\cdot \nabla(d^{2\alpha} \zeta_h - w_h)  - \sum_{E \in \mesh_h} \int_E  \nabla (U -\Pi_h U)\cdot \nabla(d^{2\alpha} \zeta_h ).
\]
 The  terms in the right-hand side are bounded using  similar arguments as  in \eqref{eq:h2_d2alpha_chi_0} - \eqref{eq:bound_T11}.
    \begin{align*}
    X_1 \leq \frac{1}{4} \|\nabla_h \zeta_h \|_{L^2_{\alpha}(\Omega)}^2 + C (h^{2-3s(1-\alpha)} + h^{2\alpha}).
    \end{align*}
 For $X_2$, similar arguments to the bound \eqref{eq:bd_T2} for the term $T_2$ hold:
\[
X_2 \leq C h^{1 -\frac32 s(1-\alpha)} \Vert \nabla_h \zeta_h\Vert_{L^2_\alpha(\Omega)}.
\]
We skip some details for brevity. The result is concluded by using triangle inequality.
\end{proof}

\section{Local $L^2$ and energy error estimates}\label{sec:local_estimates}

We show that the dG solution converges with an almost optimal rate in regions excluding the line $\Lambda$ for $k=1$ in subsection \ref{subsec:local_l2_k1}. For $k \geq 2$, we show that the dG solution converges with a rate of $k$ in subsection \ref{subsec:local_l2_k2}
In this section, we make the following assumption on the weak solution $u$ to \eqref{eq:weak_form_u}.

\textbf{A 1}. For any neighborhood $N$ of $\Lambda$, namely $\Lambda \subset N \subset \overline{N}\subset \Omega$, the weak solution $u$ belongs to $H^2(\Omega\setminus N)$.

This assumption is justified in the following two cases. If $f \in H^2(\Lambda)$, then $u \in H^2(\Omega \backslash N)$. This result was established using a splitting technique by Gjerde et al.  \cite{gjerde2019splitting}. Further, Ariche et al. show that if $f \in L^2(\Lambda)$ and $\Lambda$ is of class $ \mathcal{C}^4$, then $u$ belongs to a Kondratiev's type space \cite{ariche2017regularity}. This implies that $u \in H^2(\Omega \backslash N)$, see also \cite{d2012finite}. 

We first establish a local a priori bound on the solution of the intermediate problem 
\eqref{eq:intermediate_pb}.
\begin{lemma} \label{lemma:bounding_U_H2_local}
Let $N_0$ and $N_1$ be nested neighborhoods of $\Lambda$ satisfying  
\[
\Lambda  \subsetneq N_0 \subset \overline{N_0} \subset N_1 \subset \Omega.
\]
There exist $h_0>0$ and  a constant $C$ independent of $h$ such that for all $h\leq h_0$
\begin{equation}
    \|U\|_{H^2(\Omega \backslash N_1)} \leq C \left( \|f\|_{L^2(\Lambda)} + \|u\|_{H^2(\Omega \backslash N_0)}\right).\label{eq:strict_h2_bd_U}
\end{equation}
\end{lemma}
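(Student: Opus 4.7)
The plan is to split $U = (U-u) + u$ and bound the difference $w = U - u$ in $H^2(\Omega \setminus N_1)$ by exploiting that, for $h$ small enough, $w$ is harmonic outside $N_0$. The contribution of $u$ is already controlled on $\Omega \setminus N_0 \supset \Omega \setminus N_1$ by Assumption \textbf{A 1}, so only $w$ requires work.

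First, I would choose $h_0 > 0$ small enough that every element $E \in \mathcal{T}_\Lambda$ satisfies $E \subset N_0$; this is possible since $\mathrm{dist}(\Lambda, \partial N_0) > 0$ and $\mathrm{diam}(E) \leq h \leq h_0$. For such $h$, $f_h$ is supported in $N_0$ and $f\delta_\Lambda$ is supported on $\Lambda \subset N_0$, hence $-\Delta w = -f_h + f \delta_\Lambda = 0$ (as distributions) on $\Omega \setminus \overline{N_0}$, and in addition $w = 0$ on $\partial \Omega$. Pick an intermediate open set $N_{1/2}$ with $\overline{N_0} \subset N_{1/2} \subset \overline{N_{1/2}} \subset N_1$, and a smooth cutoff $\phi$ with $\phi \equiv 0$ on $\overline{N_{1/2}}$ and $\phi \equiv 1$ on $\Omega \setminus N_1$. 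Since $w$ is smooth (in fact real-analytic) on $\mathrm{supp}(\phi)\cap\Omega \subset \Omega \setminus \overline{N_0}$ and vanishes on $\partial \Omega$, we have $\phi w \in H^1_0(\Omega)$, and
\[
-\Delta(\phi w) = -(\Delta \phi)\, w - 2 \nabla \phi \cdot \nabla w,
\]
since $\phi \Delta w = 0$. The two right-hand side terms are supported in the compact set $\overline{N_1} \setminus N_{1/2} \subset \Omega \setminus \overline{N_0}$, where $w$ is harmonic, so interior regularity of harmonic functions gives $\|\nabla w\|_{L^2(\overline{N_1} \setminus N_{1/2})} \leq C\|w\|_{L^2(\Omega \setminus \overline{N_0})}$. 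Therefore $\|\Delta(\phi w)\|_{L^2(\Omega)} \leq C \|w\|_{L^2(\Omega)}$, and the convexity of $\Omega$ yields $\phi w \in H^2(\Omega)$ with $\|\phi w\|_{H^2(\Omega)} \leq C \|w\|_{L^2(\Omega)}$. Since $\phi \equiv 1$ on $\Omega \setminus N_1$, this gives
\[
\|w\|_{H^2(\Omega \setminus N_1)} \leq C \|w\|_{L^2(\Omega)}.
\]

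To bound $\|w\|_{L^2(\Omega)} = \|U-u\|_{L^2(\Omega)}$, I insert $\uCG$: the triangle inequality, \eqref{eq:global_bd_interm_f.e} combined with \eqref{eq:boundhU} for $\|U - \uCG\|_{L^2(\Omega)}$, and \eqref{eq:improved_L2estimate_F.E_h} for $\|\uCG - u\|_{L^2(\Omega)}$, together yield $\|w\|_{L^2(\Omega)} \leq C\|f\|_{L^2(\Lambda)}$. Finally,
\[
\|U\|_{H^2(\Omega \setminus N_1)} \leq \|w\|_{H^2(\Omega \setminus N_1)} + \|u\|_{H^2(\Omega \setminus N_1)} \leq C\|f\|_{L^2(\Lambda)} + \|u\|_{H^2(\Omega \setminus N_0)},
\]
which is \eqref{eq:strict_h2_bd_U}.

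The main obstacle is the elliptic regularity step: globally $w$ lives only in $W^{1,p_0}(\Omega)$, so no naive $H^2$ estimate is available. The cutoff $\phi$ is designed so that $\mathrm{supp}(\nabla \phi)$ sits in a compact subset of the harmonic region $\Omega \setminus \overline{N_0}$ and is disjoint from $\partial \Omega$, letting interior regularity of harmonic functions promote the $L^2$ control of $w$ to the required $H^1$ control on $\mathrm{supp}(\nabla \phi)$; convexity of $\Omega$ then closes the estimate by upgrading $\phi w$ from $H^1_0$ to $H^2$.
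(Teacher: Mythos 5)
Your proposal is correct and arrives at \eqref{eq:strict_h2_bd_U}, but it takes a genuinely different route through the same cutoff skeleton. The paper applies the cutoff directly to $U$: it computes $\Delta(U\phi)$, which is supported in the transition region $N_1\setminus N_{1/2}$ and bounded by $\|U\|_{H^1(N_1\setminus N_{1/2})}$; that local $H^1$ norm is then controlled by inserting $\uCG$ and invoking the \emph{local} finite element estimate of Wahlbin (Theorem 9.1 in \cite{wahlbin1991local}) for $\|u-\uCG\|_{H^1(N_1\setminus N_{1/2})}$, plus the global bounds \eqref{eq:global_bd_interm_f.e} and \eqref{eq:improved_L2estimate_F.E_h}. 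You instead cut off $w=U-u$, observe that $w$ is harmonic in $\Omega\setminus\overline{N_0}$ for small $h$, and use interior regularity of harmonic functions to promote $\|w\|_{L^2}$ to the needed gradient control on $\operatorname{supp}(\nabla\phi)$; the remaining quantity $\|w\|_{L^2(\Omega)}$ is then handled by purely \emph{global} FE estimates. What your approach buys is the elimination of the local FE machinery (Wahlbin's theorem), replacing it with classical interior estimates for harmonic functions; what it costs is the extra care needed because $w$ involves $u$, which is only in $W^{1,p_0}(\Omega)$ globally. Two small points to make explicit: (i) the interior-regularity step requires the transition region $\overline{N_1}\setminus N_{1/2}$ to stay at positive distance from $\partial\Omega$, i.e.\ $\overline{N_1}\subset\Omega$ — the paper also adds this to its hypotheses when constructing $N_{1/2}$; (ii) the claim $\phi w\in H^1_0(\Omega)$ needs $\nabla w\in L^2$ on $\operatorname{supp}(\phi)$ up to the boundary, which does not follow from $w\in W^{1,p_0}(\Omega)$ alone but does follow from $U\in H^2(\Omega)$ together with Assumption \textbf{A 1} (or from the harmonicity of $u$ near $\partial\Omega$). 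With these noted, the argument is complete.
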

\begin{proof}
There exists a neighborhood  $N_{1/2}$ of $\Lambda$ such that	
\[
\overline{N_0} \subset N_{1/2} 
\subset \overline{N_{1/2}} \subset N_1 \subset \overline{N_1}\subset \Omega.
\]
Define a mollifier function $\phi \in C^{\infty}(\Omega)$ which is equal to  $1$ in $\Omega \backslash N_1$ and to $0$ in $N_{1/2}$.   
Recall that by definition of $U$ \eqref{eq:intermediate_pb} and $f_h$ \eqref{eq:def_fh}, there
exists $h_0>0$ such that for $h\leq h_0$, we have 
$$-\Delta U = 0, \quad  \mathrm{in} \quad  \Omega \backslash N_0. $$
In addition, set $g$ as follows. 
\begin{equation}
    g = \Delta (U \phi), \quad \mathrm{in} \quad  \Omega. 
\end{equation}
Clearly, $g \in L^2(\Omega)$ and 
\begin{align}
g  = \phi \Delta U + 2 \nabla U \cdot \nabla \phi + U \Delta \phi 
  = \begin{cases}
0, & \mathrm{in} \quad  N_{1/2}, \\ 
2\nabla U \cdot \nabla \phi + U \Delta \phi, & \mathrm{in } \quad  N_1 \setminus N_{1/2},  \\ 
 0,  & \mathrm{in } \quad \Omega \setminus N_1.
\end{cases}
\end{align} 
Hence, with Cauchy-Schwarz's inequality, we obtain 
\begin{align}
        \|g\|_{L^2(\Omega)}  \leq C \|U\|_{H^1(N_1 \backslash N_{1/2})}\left(\| \nabla \phi\|_{L^2(N_1 \backslash N_{1/2})}+\| \Delta \phi \|_{L^2(N_1 \backslash N_{1/2})}\right) \leq  C \| U \|_{H^1(N_1\backslash N_{1/2})}. 
    \end{align}
    In the above, the constant $C$ depends on the choice of the cut-off function $\phi$ but it is independent of $h$ for all $h\leq h_0$. We remark that $U\phi$ vanishes on the boundary $\partial\Omega$. 
    By convexity of the domain and the above bound, we have 
    \begin{equation}
        \|U \phi\|_{H^2(\Omega)} \leq C \|g\|_{L^2(\Omega)} \leq C \| U\|_{H^1(N_1 \backslash N_{1/2})}.  
    \end{equation}
    By the definition of $\phi$, the above bound,  and the triangle inequality (with $ \uCG \in W_h^{1}(\mesh_h)$ satisfying \eqref{eq:discrete_F.E} for $k=1$), we obtain 
    \begin{align}
   \| U\|_{H^2(\Omega \backslash N_1)} & = \Vert U\phi \Vert_{H^2(\Omega \backslash N_1)}  \leq \|U \phi \|_{H^2(\Omega)} \nonumber   
    \leq C \|U\|_{H^1(N_1\backslash N_{1/2})} \\ 
& \leq C(\| U- \uCG\|_{H^1(N_1 \backslash N_{1/2})} + \| u - \uCG \|_{H^1(N_1\backslash N_{1/2})} + \|u\|_{H^1(N_1 \backslash N_{1/2})}). \label{eq:triangle_ineq}
    \end{align}
    A standard finite element bound \eqref{eq:global_bd_interm_f.e}, the convexity of the domain and \eqref{eq:improved_bd_fh} yield 
    \begin{align}
    \|U - \uCG\|_{H^1(\Omega )} & \leq C h\|U\|_{H^2(\Omega)} \leq Ch\|f_h\|_{L^2(\Omega)} \leq C \| f\|_{L^2(\Lambda)}. \label{eq:bounding_first_term}
    \end{align}
    To bound the second term in \eqref{eq:triangle_ineq}, we use Theorem 9.1 in \cite{wahlbin1991local}.  
    \begin{equation}
        \| u- \uCG \|_{H^1(N_1 \backslash N_{1/2})} \leq \|u -\uCG\|_{H^{1}(\Omega \backslash N_{1/2})} \leq C (h \| u\|_{H^2(\Omega \backslash N_0)} + \|u -\uCG\|_{L^2(\Omega)}). \label{eq:estimate_local_u_H1}
    \end{equation}
    Using the global bound \eqref{eq:improved_L2estimate_F.E_h}, we obtain for $0<\theta<\frac12,$
    \begin{equation}
        \|u -\uCG\|_{H^{1}(N_1 \backslash N_{1/2})} \leq C (h \| u\|_{H^2(\Omega \backslash N_0)} 
+ h^{1-\theta}  \Vert f \Vert_{L^2(\Lambda)}  ). \label{eq:sec_bd_first_tr}
    \end{equation}
    Substituting \eqref{eq:bounding_first_term} and \eqref{eq:sec_bd_first_tr} in \eqref{eq:triangle_ineq} yields the result.
\end{proof}
\subsection{Local $L^2$ bound for $k=1$ }\label{subsec:local_l2_k1}
Let $N$ be a neighborhood of $\Lambda$ such that $\overline N \subset\Omega$ Further, we will make use of the following assumption. \\
\textbf{A.2.} There exist sets $N_0, N_1, N_2, N_3$ such that
\[
\Lambda \subsetneq  N_0 \subsetneq  N_1 \subset \overline N_1 \subsetneq N_2 \subset \overline N_2 \subsetneq N_3 \subsetneq N \subsetneq \Omega. 
\]
It is important to note that the choice of the above sets is fixed and does not depend on the mesh.

The main result of this section is the following local $L^2$ estimate. 
\begin{theorem}\label{thm:local_l2_k1}
Let $k =1$ and let Assumption~\textbf{A.2.} holds. There exist $h_0\geq 0$ and  a constant $C$ independent of $h$ such that for $0 < \theta < \frac{1}{2}$  
and all $h\leq h_0$
\begin{equation}
\|u - \uDG\|_{L^2(\Omega\backslash N)} \leq Ch^{ 2-\theta} + Ch^2 |\ln(h)|. \label{eq:localL2DGk1}
\end{equation}
\end{theorem}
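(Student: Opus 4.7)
\textbf{Proof plan for Theorem~\ref{thm:local_l2_k1}.} The natural strategy is to route the argument through both the cG solution $\uCG$ and the auxiliary function $U$ from the intermediate problem \eqref{eq:intermediate_pb}, since together they interpolate the singular $u$ with the smooth-away-from-$\Lambda$ $U$. Specifically, I would begin with the triangle inequality
\[
\|u - \uDG\|_{L^2(\Omega\setminus N)} \leq \|u - \uCG\|_{L^2(\Omega\setminus N)} + \|\uCG - U\|_{L^2(\Omega\setminus N)} + \|U - \uDG\|_{L^2(\Omega\setminus N)}.
\]
The first term is the local cG $L^2$ error for the original singular problem: for $k=1$ this is exactly the almost optimal estimate of K\"oppl--Wohlmuth \cite{koppl2014optimal,koppl2016local}, which contributes the $Ch^2|\ln h|$ logarithmic term. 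The remaining two terms involve only the function $U$, which by Lemma~\ref{lemma:bounding_U_H2_local} has an $h$-independent local bound $\|U\|_{H^2(\Omega\setminus N_1)} \leq C(\|f\|_{L^2(\Lambda)} + \|u\|_{H^2(\Omega\setminus N_0)})$.

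For $\|\uCG - U\|_{L^2(\Omega\setminus N)}$ I would invoke a Wahlbin-type local $L^2$ FEM estimate \cite{wahlbin1991local} on nested subdomains chosen between $N$ and $N_3$. Concretely,
\[
\|U - \uCG\|_{L^2(\Omega\setminus N)} \leq C h^2 \|U\|_{H^2(\Omega\setminus N_1)} + C\|U - \uCG\|_{L^2(\Omega)},
\]
where the first term is $O(h^2)$ by Lemma~\ref{lemma:bounding_U_H2_local} and the pollution term is controlled by combining \eqref{eq:global_bd_interm_f.e} with \eqref{eq:improved_bd_fh}, giving at most $O(h)$; a sharper duality argument against a test function supported in $\Omega\setminus N$ (whose dual solution is harmonic on $N$ and therefore locally smooth) then improves the pollution to the $h^{2-\theta}$ rate via \eqref{eq:improved_L2estimate_F.E_h}.

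For $\|U - \uDG\|_{L^2(\Omega\setminus N)}$ I would use the same philosophy but with the local dG energy estimate \eqref{eq:Lemma_chen_chen} and an Aubin--Nitsche duality adapted to dG. Given $g \in L^2(\Omega\setminus N)$ with unit norm, extend by zero and solve $-\Delta \psi = g$ in $\Omega$ with $\psi=0$ on $\partial \Omega$; then $\psi \in H^2(\Omega)$, is harmonic on $N$, and Galerkin orthogonality (valid now because the intermediate problem has an $L^2$ source) gives
\[
\int_{\Omega\setminus N}(U - \uDG) g = a(U - \uDG, \psi - \psi_h),
\]
for any $\psi_h \in V_h^k$. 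Splitting this integral into contributions from elements in $\Omega\setminus N_{1/2}$ (where both $U$ and $\psi$ are in $H^2$, and the local energy estimate \eqref{eq:Lemma_chen_chen} delivers $O(h^2)$ via $\vertiii{U-\uDG}_{\DG(\Omega\setminus N_2)}\vertiii{\psi-\psi_h}_{\DG(\Omega\setminus N_{1/2})}$) and elements that meet $N_{1/2}$ (handled by the global bounds \eqref{eq:xiDGfull} and \eqref{eq:boundhU}, together with \eqref{eq:improved_global_dg_bd} applied to the pollution term) produces a clean $C(h^{2-\theta} + h^2|\ln h|)$ bound after absorbing constants.

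The main obstacle I anticipate is controlling the face/jump contributions in the dG duality step. Unlike in the cG case, one has to bound $\int_e \{\nabla \psi\}\cdot \bm{n}_e [\uDG]$ type terms across faces that lie near $\partial N_{1/2}$, where neither the local smoothness of $\psi$ nor the local smoothness of $U$ is uniform. Handling these carefully -- using a quasi-interpolant $\psi_h$ that is continuous, respects the zero boundary condition, and satisfies both global and local approximation estimates -- is what forces the loss of the factor $h^\theta$ in one of the resulting terms and is the delicate step that distinguishes the dG analysis from the cG one.
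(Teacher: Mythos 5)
Your skeleton is the same as the paper's: the triangle inequality through $\uCG$ and $U$, the K\"oppl--Wohlmuth bound $Ch^2|\ln h|$ for $\|u-\uCG\|_{L^2(\Omega\setminus N)}$, and duality arguments (with dual solutions that are harmonic near $\Lambda$, hence locally smooth) for the two remaining terms. The paper indeed proves these two remaining bounds as Lemma~\ref{lemma:linking_U_w} and Lemma~\ref{lemma:local_l2_bound_cg}, each of order $h^{2-\theta}$.

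However, there is a genuine gap in how you propose to control the near-line contributions, and it is precisely the step where the $h^{2-\theta}$ rate is either won or lost. In the duality identity the problematic term is
\begin{equation*}
\sum_{E}\int_{E\cap N_2}\nabla(U-\uDG)\cdot\nabla(w-S_hw)
\;\leq\;
\|\nabla(w-S_hw)\|_{L^\infty(N_2)}\,\|\nabla_h(U-\uDG)\|_{L^1(N_2)},
\end{equation*}
and similarly for the cG term and for the face/jump sums near $\Lambda$. The $L^\infty$ factor is $O(h)\|U-\uDG\|_{L^2(\Omega\setminus N)}$ by interior regularity of the dual solution, so everything hinges on showing that $\|\nabla_h(U-\uDG)\|_{L^1(N_2)}=O(h^{1-\theta})$. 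The global bounds you invoke cannot deliver this: \eqref{eq:xiDGfull} together with \eqref{eq:boundhU} only give $\vertiii{U-\uDG}_{\DG}\leq Ch\|U\|_{H^2(\Omega)}\leq C$, i.e.\ the global (unweighted) energy error is merely $O(1)$, and $N_2$ has $h$-independent volume, so Cauchy--Schwarz yields only $\|\nabla_h(U-\uDG)\|_{L^1(N_2)}\leq C$. Your route therefore produces an $O(h)$ bound for the near-line block, not $O(h^{2-\theta})$; the bounds \eqref{eq:improved_global_dg_bd} and \eqref{eq:improved_L2estimate_F.E_h} concern $\|u-\uDG\|_{L^2}$ and $\|u-\uCG\|_{L^2}$ and do not control the gradient of $U-\uDG$ near $\Lambda$ either.

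The missing ingredient is the weighted energy machinery of Section~\ref{sec:weighted_global_conv}. The paper takes $\alpha=1-\theta^2$ and $s=2/(3\theta)$ in Lemma~\ref{lemma:conv_dg_alpha_local} (resp.\ Lemma~\ref{lemma:weighted_bd_fe_U}) to get $\|U-\uDG\|_{\DG,\alpha}\leq Ch^{1-\theta}$ (resp.\ $\|\nabla(U-\uCG)\|_{L^2_\alpha(\Omega)}\leq Ch^{1-\theta}$), and then uses $d^{-\alpha}\in L^2(\Omega)$ to convert: $\|\nabla_h\xi\|_{L^1(N_2)}\leq\|d^{-\alpha}\|_{L^2(\Omega)}\|\nabla_h\xi\|_{L^2_\alpha(\Omega)}\leq Ch^{1-\theta}$. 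The same device (after splitting faces into a tubular neighborhood $B_h$ of $\Lambda$ and its complement) controls $\sum_e\|[\xi]\|_{L^1(e)}$ for the jump terms you correctly flag as delicate. Without these weighted estimates your argument stalls at an overall $O(h)$ local rate, so you would need to add this entire layer to make the proof work.
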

The proof of this estimate also relies on establishing local bounds for the continuous and discontinuous discretizations of the intermediate problem \eqref{eq:intermediate_pb}. As before, this will be established in several Lemmas. 

\begin{lemma}
    \label{lemma:linking_U_w}
Assume \textbf{A.2}  holds. There exist $h_0>0$ and a constant $C$ independent of $h$ such that for all $h\leq h_0$
\begin{equation}
    \Vert U - \uDG \Vert_{L^2(\Omega \backslash N)} \leq Ch^{2-\theta}, \quad \forall 0 < \theta < \frac{1}{2}. 
\end{equation}
\end{lemma}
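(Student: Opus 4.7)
\textbf{Plan for Lemma~\ref{lemma:linking_U_w}.} My approach is a localized duality (Aubin--Nitsche) argument that exploits two facts: (i) away from $\Lambda$, the intermediate solution $U$ is smooth uniformly in $h$ (Lemma~\ref{lemma:bounding_U_H2_local}), so the dG error $e = U - \uDG$ is small in the local energy norm; and (ii) a dual problem whose source is supported in $\Omega\setminus N$ yields a dual function that is harmonic, hence smooth, on a neighborhood of $\Lambda$.

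First I would upgrade the global bound to a local energy bound. Apply the Chen--Chen estimate \eqref{eq:Lemma_chen_chen} with $D = \Omega\setminus N_2$ and $\widetilde D = \Omega\setminus N_1$ (which are admissible since $\mathrm{dist}(D,\partial \widetilde D\setminus \partial D)>0$). Lemma~\ref{lemma:bounding_U_H2_local} bounds $\|U\|_{H^2(\Omega\setminus N_1)}$ by $C(\|f\|_{L^2(\Lambda)}+\|u\|_{H^2(\Omega\setminus N_0)})$, and combining \eqref{eq:global_bd_dg} with \eqref{eq:boundhU} yields $\|U-\uDG\|_{L^2(\Omega)} \leq C h$. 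Together these give $\vertiii{e}_{\DG(\Omega\setminus N_2)} \leq C h$.

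Next I would set up the dual problem: let $\psi\in H^1_0(\Omega)$ solve $-\Delta \psi = e\,\chi_{\Omega\setminus N}$. Convexity of $\Omega$ gives $\psi\in H^2(\Omega)$ with $\|\psi\|_{H^2(\Omega)} \leq C\|e\|_{L^2(\Omega\setminus N)}$, and because the source vanishes on $N$, $\psi$ is harmonic in $N$. Interior elliptic regularity on the nested neighborhoods $N_1\Subset N_2 \Subset N_3\subset N$ thus yields $\|\psi\|_{H^m(N_2)}\leq C_m\|e\|_{L^2(\Omega\setminus N)}$ for any integer $m$. Since $\epsilon = -1$, SIPG is adjoint consistent, so
\[
\|e\|^2_{L^2(\Omega\setminus N)} = (e,-\Delta \psi)_{L^2(\Omega)} = a(e,\psi) = a(e,\psi - I_h \psi),
\]
using Galerkin orthogonality with $I_h\psi$ the Scott--Zhang (or Lagrange) interpolant in $V_h^1(\mesh_h)$.

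The main work is bounding $a(e,\psi - I_h\psi)$, which I would split via a smooth cutoff $\omega$ that equals $1$ on $\Omega\setminus N_2$ and vanishes on $N_1$. On the far-from-$\Lambda$ part, I would use continuity of $a$, the local energy bound just derived, and the standard interpolation estimate $\vertiii{\psi - I_h\psi}_{\DG(\Omega)}\leq C h\|\psi\|_{H^2(\Omega)}$, which contributes $C h^2 \|e\|_{L^2(\Omega\setminus N)}$. On the near-$\Lambda$ part, I would exploit that $\Delta\psi = 0$ on $N_2$ to integrate by parts element by element and convert the volume integrals into boundary contributions across $\partial N_1$ and into jump terms on interior faces inside $N_1$ where $\psi$ is arbitrarily smooth; the resulting terms are handled by the trace inequality, the global bound $\vertiii{e}_{\DG}\leq C$, and the interior $H^m$ bounds on $\psi$. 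The factor $h^{-\theta}$ enters in exactly one place: to control an interface term we must estimate $\|e\|_{L^{p_0}}$ type quantities by the global $W^{1,p_0}$ regularity of $U$ with $p_0$ chosen close to $3/2$, together with the polluted global CG bound \eqref{eq:improved_L2estimate_F.E_h}, producing $C h^{2-\theta}$. Absorbing one factor of $\|e\|_{L^2(\Omega\setminus N)}$ via Young's inequality then yields the claim.

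The main obstacle I expect is the near-$\Lambda$ block: the bookkeeping of boundary terms generated by integrating by parts in $N_1$, in particular the transition faces on $\partial N_1$ where neither $U$ has uniform $H^2$ control nor $I_h\psi$ has the smoothness of $\psi$. A super-approximation style manipulation (writing $\psi - I_h\psi = (\psi - I_h\psi)\omega + (\psi - I_h\psi)(1-\omega)$ and interpolating $\omega(\psi-I_h\psi)$ again) is likely needed so that only the harmless $L^2$ error of $\psi - I_h\psi$ (not the $H^1$ error) remains on the near-$\Lambda$ side, and the $\theta$-pollution stays confined to a single term that is already $o(h)$.
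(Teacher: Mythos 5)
Your overall architecture coincides with the paper's: the same dual problem $-\Delta w = (U-\uDG)\chi_{\Omega\setminus N}$, the observation that $w$ is harmonic near $\Lambda$ so interior regularity gives $\Vert w\Vert_{W^{4,2}(N_3)}\leq C\Vert U-\uDG\Vert_{L^2(\Omega\setminus N)}$ and hence an $O(h)\Vert U-\uDG\Vert_{L^2(\Omega\setminus N)}$ bound on $\Vert\nabla(w-S_hw)\Vert_{L^\infty(N_2)}$, Galerkin orthogonality for the intermediate problem, a near/far splitting, and the Chen--Chen local energy estimate combined with Lemma~\ref{lemma:bounding_U_H2_local} for the far-from-$\Lambda$ block. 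All of that is sound and matches the paper.

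The gap is in the near-$\Lambda$ block, and it is the heart of the lemma. After pairing the $L^\infty$ bound on $\nabla(w-S_hw)$ with the error on $N_2$, you still need $\Vert\nabla_h(U-\uDG)\Vert_{L^1(N_2)}$ and $\sum_{e\in\Gamma_h(N_1)}\Vert[U-\uDG]\Vert_{L^1(e)}$ to be $O(h^{1-\theta})$. The tools you list cannot produce this: the global energy bound gives only $\vertiii{U-\uDG}_{\DG}\leq Ch\Vert U\Vert_{H^2(\Omega)}\leq C$, i.e.\ $O(1)$, so Cauchy--Schwarz on $N_2$ yields only $O(h)\cdot O(1)$ for the near block --- first order, not $h^{2-\theta}$. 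Your integration-by-parts variant does not escape this either: the face terms it generates (e.g.\ $\{U-\uDG\}[\partial_n(\psi-I_h\psi)]$) are again controlled only through $L^2$-based norms of the error near $\Lambda$, which carry no smallness beyond $O(h)$. The mechanism you propose for the $h^{-\theta}$ loss --- $W^{1,p_0}$ regularity of $U$ plus the bound \eqref{eq:improved_L2estimate_F.E_h} --- does not apply: \eqref{eq:improved_L2estimate_F.E_h} concerns $u-\uCG$, not $U-\uDG$, and $U$ is globally $H^2$ anyway (the problem is that $\Vert U\Vert_{H^2(\Omega)}$ scales like $h^{-1}$, not a lack of $W^{1,p_0}$ regularity). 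The missing ingredient is the weighted energy estimate of Lemma~\ref{lemma:conv_dg_alpha_local}: choosing $\alpha=1-\theta^2$ and $s=2/(3\theta)$ gives $\Vert U-\uDG\Vert_{\DG,\alpha}\leq Ch^{1-\theta}$, and then Cauchy--Schwarz against the weight $d^{-\alpha}\in L^2(\Omega)$ (together with a tubular-neighborhood count of the faces within distance $O(h)$ of $\Lambda$) yields the required $O(h^{1-\theta})$ bounds on the $L^1$ quantities above. Without importing that weighted estimate from Section~\ref{sec:weighted_global_conv}, your near-$\Lambda$ block stalls at $O(h)$ and the claimed rate $h^{2-\theta}$ is not reached.
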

\begin{proof}
Define the characteristic function associated to $\Omega \setminus N$: 
$$ \chi_{\Omega \setminus N} (x) =  \begin{cases}
    1, & x \in \Omega\backslash N, \\ 
    0, & x \in N. 
\end{cases} $$
For readibility, set $\xi = U -\uDG$ and
consider the auxiliary problem:
\begin{alignat}{2} 
    - \Delta w & = \xi \chi_{\Omega \backslash N}, && \quad \text{in } \Omega, \label{eq:dual_pb_1}\\ 
    w  & = 0,&&  \quad  \text{on } \partial \Omega. 
\end{alignat}
Clearly, since $\xi \chi_{\Omega \backslash N}$  belongs to $L^2(\Omega)$, the function $w$ belongs to $H^2(\Omega)\cap H^1_0(\Omega)$. 
Multiplying \eqref{eq:dual_pb_1} by $\xi$ and integrating over $\Omega$, we obtain
\begin{align} 
\Vert \xi \Vert^2_{L^2(\Omega \backslash N)} 
= \sum_{E \in \mesh_h} \int_E \nabla \xi \cdot \nabla w  -\sum_{e \in \Gamma_h \cup \partial \Omega } \int_{e}  \{ \nabla w \}  \cdot \bm{n}_e[\xi] 
= a(\xi, w).  \label{eq:integ_parts_1}
\end{align} 
Let $S_h w  \in W_h^1(\mathcal{E}_h) $ be the Scott-Zhang interpolant of $w$. With the consistency property \eqref{eq:consistency_interm_pb}, we have
\begin{align}
\|\xi\|^2_{L^2(\Omega \backslash N)} = &  a(\xi, w- S_h w) \nonumber\\
=  & \sum_{E \in \mesh_h} \int_E \nabla \xi \cdot \nabla (w - S_h w)   - \sum_{e \in \Gamma_h \cup \partial \Omega} \int_{e} \left\{ \nabla(w - S_h w) \right\}\cdot \bm{n}_e  [\xi]\nonumber\\
= & 
\Theta_1 + \Theta_2. \label{eq:startpoint}
\end{align} 
We proceed by providing bounds for $\Theta_1$ and $\Theta_2$. 
 We follow \cite{koppl2014optimal,choi2020optimal}, split $\Theta_1$ into two terms, and use Holder's inequality,
\begin{align}
\Theta_1 & = \sum_{E \in \mesh_h} \int_{E \cap  N_2  } \nabla \xi \cdot \nabla(w-S_h w) +\sum_{E \in \mesh_h} \int_{E \cap (\Omega \backslash N_2)} \nabla \xi\cdot \nabla(w-S_h w) \nonumber \\ 
& \leq  \| \nabla(w-S_hw)\|_{L^{\infty}(N_2)}\sum_{E \in \mesh_h} \|\nabla \xi \|_{L^1(E \cap N_2)}   +   \|\nabla_h \xi \|_{L^2(\Omega \backslash N_2)} \|\nabla(w - S_h w)\|_{L^2(\Omega \backslash N_2)}  \nonumber \\ &
=  \Theta_1^1 + \Theta_1^2. 
\end{align} 
Fix $\theta \in (0,1/2)$,  define $\alpha = 1 -\theta^2 $, which implies that $3/4 < \alpha < 1$.   Take $s = 2/(3\theta)$ in Lemma~ \ref{lemma:conv_dg_alpha_local}. We have 
\begin{equation}
    \|\xi \|_{\DG,\alpha} \leq Ch^{1-\theta}. \label{eq:bound_xiDG_alpha_in_localproof}  
\end{equation}
Hence, with Cauchy-Schwarz's inequality and the fact that $d^{-\alpha} \in L^2(\Omega)$, (recall $d$ is the distance function defined in
\eqref{eq:distdef}), we obtain 
\begin{equation}
\sum_{E\in \mesh_h} \|\nabla \xi\|_{L^1(E \cap N_2)} \leq \sum_{E\in \mesh_h} \|d^{-\alpha}\|_{L^2(E\cap N_2)}\|\nabla \xi\|_{L^2_{\alpha} (E \cap N_2)} \leq C \|\nabla_h \xi\|_{L^2_{\alpha}(\Omega)} \leq Ch^{1-\theta}.   \label{eq:bounding_L1norm_local_iterm}
\end{equation}
In addition, observe that since $ -\Delta w = 0 \,\, \mathrm{in} \,\, N_3$,  Theorem 8.10 in \cite{gilbarg2015elliptic} and elliptic regularity due to the convexity of the domain yield
\begin{equation}
    \Vert w \Vert_{W^{4,2}(N_3)} \leq  C\Vert w \Vert_{H^2(\Omega)} \leq C \|\xi\|_{L^{2}(\Omega\backslash N)}.  \label{eq:boundonw}
\end{equation}
Hence, by a Sobolev embedding result and approximation properties 
there is $h_1>0$ such that for all $h\leq h_1$ 
\begin{equation}
\|\nabla(w-S_h w)\|_{L^{\infty}(N_2)} \leq C h |w|_{W^{2,\infty}(N_3)} \leq Ch \Vert w \Vert_{W^{4,2}(N_3)} \leq Ch\|\xi\|_{L^{2}(\Omega\backslash N)}. \label{eq:inf_bound_w}
\end{equation}
With \eqref{eq:bounding_L1norm_local_iterm} and \eqref{eq:inf_bound_w}, we obtain
\begin{equation}
|\Theta_1^1| \leq Ch^{2-\theta} \|\xi\|_{L^2(\Omega \backslash N)}. \label{eq:bound_theta11}
\end{equation} 
For $\Theta_1^2$, we apply Lemma 4.1 by Chen and Chen \cite{chen2004pointwise} (see \eqref{eq:Lemma_chen_chen} with $D = \Omega \backslash N_1 $ and 
$\widetilde{D} = \Omega \backslash N_2$).   There exists $h_2\geq 0$ such that for
all $h\leq h_2$
\[
\|\nabla_h \xi \|_{L^2(\Omega \backslash N_2)} \leq C h \Vert U\Vert_{H^2(\Omega\setminus N_1)}
+ C \Vert \xi \Vert_{L^2(\Omega\setminus N_1)}.
\]
With Lemma \ref{lemma:bounding_U_H2_local}, \eqref{eq:global_bd_dg}, and \eqref{eq:boundhU},  we have
\begin{equation}
\|\nabla_h \xi \|_{L^2(\Omega \backslash N_2)} \leq C h (\Vert f \Vert_{L^2(\Lambda)} + \Vert u \Vert_{H^2(\Omega\setminus N_0)})
+C h^2 \Vert U \Vert_{H^2(\Omega)} \leq C  h (\Vert f \Vert_{L^2(\Lambda)} + \Vert u \Vert_{H^2(\Omega\setminus N_0)}). \nonumber
\end{equation}
With approximation properties and an elliptic bound, we have
\[
\Vert \nabla (w-S_h w)\Vert_{L^2(\Omega\setminus N)} \leq C h \Vert w\Vert_{H^2(\Omega)}
\leq C h \Vert \xi \Vert_{L^2(\Omega\setminus N)}.
\]
So we combine the bounds above:
\begin{equation}
   |\Theta_1^2| \leq Ch^2 \|\xi\|_{L^2(\Omega \backslash N)}. \label{eq:bound_theta12}
\end{equation}
Similarly, we split and bound $\Theta_2$.  For any domain $\mathcal{O}$, let $\Gamma_h(\mathcal{O})$ 
denote the set of all faces $e$ such that $e\cap \mathcal{O} \neq \emptyset$ and let $\Gamma_h^c(\mathcal{O})$ be the complementary
set of faces, namely $\Gamma_h^c(\mathcal{O}) = (\Gamma_h \cup \{e: e\subset\partial\Omega\})\setminus \Gamma_h(\mathcal{O})$. 
 There exists $h_3>0$ such that for all $h\leq h_3$:
    \begin{align*}
    |\Theta_2| \leq & \|\nabla(w-S_hw )\|_{L^\infty(N_2)}\sum_{e\in \Gamma_h (N_1)}\|[\xi]\|_{L^1(e)}  \\ 
& + \sum_{e \in \Gamma_h^c(N_1)} \|\{\nabla(w-S_hw)\} \cdot \bm{n}_e\|_{L^2(e) } \|[\xi]\|_{L^2(e)}= \Theta_2^1 + \Theta_2^2. 
    \end{align*}
     Using \eqref{eq:inf_bound_w}, we have
\[
\Theta_2^1 \leq C h \Vert \xi\Vert_{L^2(\Omega\setminus N)} \sum_{e \in \Gamma_h(N_1)}\|[\xi]\|_{L^1(e)}.
\]
To handle the second factor in the left-hand side of the inequality above, we introduce a  tubular domain  $B_h$ containing $\Lambda$. 
That is, $B_h$ is the set of elements $E$ such that for any $\bm{x}\in E$, the distance $d(\bm{x},\Lambda) \leq 2 h$.
This implies that the number of elements in $B_h$ is bounded above by $C h^{-1}$ for some constant
$C$ independent of $h$.  
\[
\sum_{e \in \Gamma_h(N_1\cap B_h)} \|[\xi]\|_{L^1(e)} \leq
C \left(\sum_{e\in \Gamma_h(B_h)} h \|1\|_{L^2(e)}^2 \right)^{1/2} \|\xi\|_{\DG}
\leq C h \|\xi\|_{\DG}.
\]
Any face $e\in \Gamma_h(N_1\setminus B_h)$ belongs to two elements, say $E_e^1$ and $E_e^2$. Since $d^{-\alpha-1}|_{E_e^i} \leq h^{-\alpha-1}$,
the function $d^{-\alpha}$ belongs to $H^1(E_e^i)$, for $i=1,2$. \Bk
With the trace inequality \eqref{eq:3d_1d_trace_estimate_continuous} and with \eqref{eq:boundond}
\begin{align*}
\sum_{e \in \Gamma_h(N_1\setminus B_h)} \|[\xi]\|_{L^1(e)} \leq &
C \left(\sum_{e\in \Gamma_h(N_1\setminus B_h)} h \Vert d^{-\alpha} \Vert_{L^2(e)}^2\right)^{1/2}
\Vert \xi \Vert_{\DG,\alpha}\\
\leq &  C \left(\sum_{e\in \Gamma_h(N_1\setminus B_h)} ( \Vert d^{-\alpha}\Vert_{L^2(E_e^1\cup E_e^2)}^2 
+ h^2 \Vert d^{-\alpha-1}\Vert_{L^2(E_e^1\cup E_e^2)}^2)\right)^{1/2} \Vert \xi \Vert_{\DG,\alpha}\\
\leq &  C \left(\sum_{e\in \Gamma_h(N_1\setminus B_h)} \Vert d^{-\alpha}\Vert_{L^2(E_e^1\cup E_e^2)}^2 \right)^{1/2}
\Vert \xi \Vert_{\DG,\alpha}\\
\leq &  C \Vert d^{-\alpha}\Vert_{L^2(\Omega)} \Vert \xi \Vert_{\DG,\alpha}.  
\end{align*}
Hence,  we use \eqref{eq:bound_xiDG_alpha_in_localproof}, \eqref{eq:inf_bound_w} and the fact $\|\xi\|_{\DG} \leq Ch\|U\|_{H^2(\Omega)} \leq C.$ We have 
\begin{equation}
|\Theta_2^1| \leq Ch^{2-\theta} \|\xi\|_{L^2(\Omega \setminus N)}.  \label{eq:bound_Theta_21}
\end{equation}
To handle $\Theta_2^2$, we use \eqref{eq:3d_1d_trace_estimate_continuous}, approximation properties,  Lemma 4.1 in \cite{chen2004pointwise} (see \eqref{eq:Lemma_chen_chen} with $D = \Omega\setminus N_2$ and $\widetilde{D} = \Omega\setminus N)$,  and 
\eqref{eq:strict_h2_bd_U}.
\begin{align}
|\Theta_2^2|  &\leq C \left(\sum_{e \in \Gamma_h^c(N_1)} \|\nabla (w- S_h w)\|_{L^2(  E_e^1 \cup E_e^2  )}^2 + h^2 \|\nabla^2 w\|_{L^2(E_e^1\cup E_e^2)}^2 \right)^{1/2} \|\xi\|_{\DG(\Omega \backslash N_2)}\nonumber  \\ 
&\leq Ch|w|_{H^2(\Omega)} (h|U|_{H^2(\Omega\backslash N)} + \|\xi\|_{L^2(\Omega\setminus N)}) 
\end{align}
With \eqref{eq:global_bd_dg}and \eqref{eq:boundhU},  we have
\[
\Vert \xi \Vert_{L^2(\Omega\setminus N)} \leq C h \Vert f \Vert_{L^2(\Lambda)}.
\]
Thus, with \eqref{eq:boundonw}, we obtain
\begin{equation}
|\Theta_2^2|  \leq C h^2 \Vert \xi \Vert_{L^2(\Omega\setminus N)}.
\label{eq:bound_Theta_22}
\end{equation}
 \Bk
Combining bounds \eqref{eq:bound_theta11}, \eqref{eq:bound_theta12}, \eqref{eq:bound_Theta_21}, \eqref{eq:bound_Theta_22}
with \eqref{eq:startpoint} 
yields the result. 
\end{proof}
\Bk
The next step is to bound the local $L^2$ norm of the error $ U - \uCG$. 
\begin{lemma}\label{lemma:local_l2_bound_cg} Let Assumption~\textbf{A.2.} hold. 
There exist $h_0>0$ and  a constant $C$ independent of $h$ such that  for all $h\leq h_0$
    \begin{equation}
        \|U - \uCG\|_{L^2(\Omega \backslash N)} \leq Ch^{2-\theta}, \quad  \forall 0 < \theta < \frac{1}{2}. \label{eq:3d_1d_local_l2_bound_cg}  
    \end{equation}
\end{lemma}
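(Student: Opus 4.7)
The plan is to mimic the duality argument just used for $U-\uDG$ in Lemma~\ref{lemma:linking_U_w}, but in the simpler continuous Galerkin setting where jump terms disappear. Set $\eta = U - \uCG$, and introduce the auxiliary problem $-\Delta w = \eta \chi_{\Omega\setminus N}$ with $w=0$ on $\partial\Omega$. Convexity of $\Omega$ gives $w \in H^2(\Omega)\cap H_0^1(\Omega)$, and since $\eta\chi_{\Omega\setminus N}$ vanishes on a neighborhood of $\Lambda$ (which contains $N_3$), elliptic regularity together with Theorem~8.10 of \cite{gilbarg2015elliptic} yields $\|w\|_{W^{4,2}(N_3)} \leq C\|\eta\|_{L^2(\Omega\setminus N)}$, exactly as in \eqref{eq:boundonw}.

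The starting identity, by multiplying by $\eta$, integrating by parts, and invoking Galerkin orthogonality with the Scott--Zhang interpolant $S_h w \in W_h^1(\mesh_h)$, is
\begin{equation*}
\|\eta\|_{L^2(\Omega\setminus N)}^2 = \int_\Omega \nabla \eta \cdot \nabla(w - S_h w) = \int_{N_2} \nabla \eta\cdot \nabla(w-S_h w) + \int_{\Omega\setminus N_2}\nabla\eta\cdot\nabla(w-S_h w) = \Xi_1 + \Xi_2.
\end{equation*}
For $\Xi_1$ I would use $|\Xi_1|\leq \|\nabla(w-S_h w)\|_{L^\infty(N_2)}\|\nabla\eta\|_{L^1(N_2)}$. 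The first factor is $\mathcal{O}(h)\|\eta\|_{L^2(\Omega\setminus N)}$ by Sobolev embedding and the $W^{4,2}(N_3)$ bound on $w$, exactly as in \eqref{eq:inf_bound_w}. For the second factor I pick $\alpha = 1-\theta^2$ and $s=2/(3\theta)$, apply Cauchy--Schwarz with weight $d^{-\alpha}$, and control $\|\nabla\eta\|_{L^2_\alpha(\Omega)}$ by $C h^{1-\theta}$ from Lemma~\ref{lemma:weighted_bd_fe_U}. This gives $|\Xi_1|\leq C h^{2-\theta}\|\eta\|_{L^2(\Omega\setminus N)}$.

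For $\Xi_2$ I would apply Cauchy--Schwarz and standard approximation $\|\nabla(w-S_h w)\|_{L^2(\Omega\setminus N_2)} \leq Ch\|w\|_{H^2(\Omega)} \leq C h\|\eta\|_{L^2(\Omega\setminus N)}$. For $\|\nabla\eta\|_{L^2(\Omega\setminus N_2)}$, I invoke Wahlbin's local energy estimate (Theorem~9.1 in \cite{wahlbin1991local}) on the pair $(\Omega\setminus N_2,\Omega\setminus N_1)$,
\[
\|\nabla \eta\|_{L^2(\Omega\setminus N_2)}\leq C\bigl(h\|U\|_{H^2(\Omega\setminus N_1)} + \|\eta\|_{L^2(\Omega\setminus N_1)}\bigr),
\]
then bound $\|U\|_{H^2(\Omega\setminus N_1)}$ by Lemma~\ref{lemma:bounding_U_H2_local} and $\|\eta\|_{L^2(\Omega)}$ by \eqref{eq:global_bd_interm_f.e} combined with \eqref{eq:boundhU}, which gives $\|\nabla\eta\|_{L^2(\Omega\setminus N_2)}\leq C h$ and hence $|\Xi_2|\leq Ch^2\|\eta\|_{L^2(\Omega\setminus N)}$.

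Combining the two bounds and cancelling one factor of $\|\eta\|_{L^2(\Omega\setminus N)}$ yields \eqref{eq:3d_1d_local_l2_bound_cg}. The only real obstacle is the near-line contribution $\Xi_1$: the usual Aubin--Nitsche estimate would lose too many factors, and the weighted $L^2_\alpha$ control of $\nabla\eta$ from Lemma~\ref{lemma:weighted_bd_fe_U} is the essential ingredient that buys back enough regularity to extract the $h^{2-\theta}$ rate. Everything else is a direct transcription of the argument already carried out for $\uDG$, with jump terms simply absent.
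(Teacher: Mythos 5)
Your proposal is correct and follows essentially the same route as the paper's proof: the same dual problem, Galerkin orthogonality against the Scott--Zhang interpolant, a near/far splitting of the resulting integral, the weighted bound from Lemma~\ref{lemma:weighted_bd_fe_U} with $\alpha=1-\theta^2$, $s=2/(3\theta)$ for the near-line term, and Wahlbin's local energy estimate together with Lemma~\ref{lemma:bounding_U_H2_local} for the far term. The only differences are cosmetic (which nested set $N_1$ versus $N_2$ carries the split, and spelling out the $W^{4,2}$ interior regularity of the dual solution, which the paper leaves implicit by reference to the preceding lemma).
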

\begin{proof}
Because the proof follows that of Lemma~\ref{lemma:linking_U_w}, it is sketched only and
details are omitted.
The starting point is the following dual problem
\begin{alignat}{2} 
    - \Delta z & = (U-\uCG) \chi_{\Omega \backslash N}, && \quad \text{in } \Omega, \label{eq:dual_pb_2}\\ 
    z  & = 0,&&  \quad  \text{on } \partial \Omega, 
\end{alignat}  
where $\chi_{\Omega\setminus N}$ is the characteristic function associated to $\Omega\setminus N$.
Let $S_h z$ denote the Scott-Zhang interpolant of $z$.  We multiply \eqref{eq:dual_pb_2} by $(U-\uCG)$ and integrate by parts. 
\begin{align}
\|U-&\uCG\|^2_{L^2(\Omega \backslash N)} = \int \nabla z \cdot \nabla (U-\uCG) = \int \nabla (z-S_hz) \cdot \nabla(U-\uCG) \nonumber\\ & \leq C \|\nabla(z- S_hz)\|_{L^\infty(N_1)} \|\nabla (U-\uCG)\|_{L^1(N_1)} + \|\nabla(z- S_hz)\|_{L^2(\Omega \backslash N_1)} \|\nabla (U-\uCG)\|_{L^2(\Omega \backslash N_1)}.
\label{eq:uCGint}
\end{align}
The first term is handled like $\Theta_1^1$. Let $\alpha = 1-\theta^2$ and use Lemma \ref{lemma:weighted_bd_fe_U} with $s=2/(3\theta)$ to obtain
for $h$ small enough: 
\begin{align}
    \|\nabla(z- S_hz)\|_{L^\infty(N_1)} \|\nabla (U-\uCG)\|_{L^1(N_1)} \leq & C h |z|_{W^{2,\infty}(N_2)} \|\nabla(U-\uCG)\|_{L^2_\alpha(\Omega)} \nonumber\\
  \leq & Ch^{2-\theta}\|U- \uCG\|_{L^2(\Omega \backslash N)}. \label{eq:3d_1d_local_cg_U_0}
\end{align}
For the second term, we use Theorem 9.1 in \cite{wahlbin1991local}, \eqref{eq:strict_h2_bd_U}, \eqref{eq:global_bd_interm_f.e},
\eqref{eq:U_convex_bd} and \eqref{eq:improved_bd_fh}.
\begin{equation}
    \|\nabla( U- \uCG )\|_{L^2(\Omega \backslash N_1)} \leq C (h \| U\|_{H^2(\Omega \backslash N_0)} + \|U - \uCG\|_{L^2(\Omega)}) \leq Ch. \nonumber
\end{equation} 
Therefore, with approximation properties and  convexity of the domain, we have
\begin{equation} 
    \|z - S_h z \|_{L^2(\Omega \backslash N_1)}\|\nabla(U - \uCG)\|_{L^2(\Omega\setminus N_1)} 
\leq C h^2 \|z\|_{H^2(\Omega)}\leq C h^2 \|U - \uCG\|_{L^2(\Omega \backslash N)}. \label{eq:3d_1d_local_cg_U_1}
\end{equation}
Bound \eqref{eq:3d_1d_local_l2_bound_cg} immediately follows from \eqref{eq:uCGint}, \eqref{eq:3d_1d_local_cg_U_0} and  \eqref{eq:3d_1d_local_cg_U_1}.
\end{proof}
\textbf{Proof of Theorem} \ref{thm:local_l2_k1}: The result follows by the triangle inequality:
\begin{equation}
\|u - \uDG\|_{L^2(\Omega \backslash N)} \leq \|u -\uCG\|_{L^2(\Omega \backslash N)} + \|\uCG - U\|_{L^2(\Omega \backslash N)} + \|U - \uDG\|_{L^2(\Omega \backslash N)}. 
\end{equation}
The first term is bounded in \cite{koppl2016local}: 
\[\|u - \uCG\|_{L^2(\Omega \backslash N)} \leq Ch^2 |\ln h|. \] 
The result then follows by using Lemma \ref{lemma:linking_U_w} and Lemma \ref{lemma:local_l2_bound_cg}. 

\subsection{Local $L^2$ bounds for $k \geq 2$}\label{subsec:local_l2_k2}
In this section, we use duality arguments to obtain a local $L^2$ estimate for $k \geq 2$. We use negative norms, recalled here. For any integer $m \geq 0$  and for $v \in L^2(\Omega)$,  
\begin{equation}
\|v\|_{H^{-m}(B)} = \sup_{\phi \in H_0^m(B)}\frac{|\int_{B} v\phi | }{\|\phi\|_{H^{m}(B)}}, \quad B \subseteq \Omega. 
\end{equation}
The main result of this section is given in Theorem \ref{thm:main_result_local_k2}. To begin this analysis, we first establish general local results for the dG approximation. Such results are shown with techniques adapted from Nitsche and Schatz \cite{nitsche1974interior}. In addition, for any convex domain $B\subseteq\Omega$,  we introduce the operator $Q_B:L^2(B) \rightarrow  H^2(\Omega)\cap H_0^1(\Omega) $ with $Q_B(\phi) = v$ such that
$v$ solves 
\begin{alignat}{2}
    -\Delta v &= \phi  && \quad\mathrm{in} \,\, B \label{eq:second_duality} \\ 
    v & = 0 , && \quad \mathrm{on}\,\,  \partial B.  
\end{alignat}
The following elliptic regularity result holds \cite{evans2010partial}. For any integer $m \geq 0$, 
\begin{equation}
    \label{eq:elliptic_regularity_general} 
    \| Q_B(\phi)\|_{H^{m+2}(B)} \leq  C\|\phi\|_{H^m(B)}. 
\end{equation}
\begin{lemma}
     Let $B  \subset \overline B \subset B_1 \subset   \overline{B_1}\subset  \Omega$ be open convex sets.  There exists $h_0>0$ 
such that for  any integer  \Bk $m \geq 0$  and all $0<h \leq h_0$
    \begin{align}
    \| U-\uDG \|_{H^{-m}(B)} & \leq C (h^{\min(k, m+1)}\vertiii{ U-\uDG }_{\DG(B_1)} + \|U-\uDG\|_{H^{-m-1}(B_1)}). \label{eq:error_H_s}
    \end{align} 
    In addition, we have 
    \begin{align}
    \| U-\uDG  \|_{L^2(B)} & \leq  C (h\vertiii{U-\uDG}_{\DG(B_1)} + \| U-\uDG \|_{H^{-m}(B_1)}). \label{eq:error_interm_l2}
    \end{align}
The constant $C$ is independent of $h$. 
\end{lemma}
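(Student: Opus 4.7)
Both inequalities are Nitsche--Schatz-style local duality estimates. Set $\xi = U - \uDG$, and recall the Galerkin orthogonality $a(\xi, v_h) = 0$ for every $v_h \in V_h^k$, together with the standing choice $\epsilon = -1$ fixed earlier in the paper.

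For \eqref{eq:error_H_s}, I would test against an arbitrary $\phi \in H_0^m(B)$ with $\|\phi\|_{H^m(B)} \leq 1$ and bound $|\int_B \xi\phi|$. Fix a strictly nested intermediate set $B \subset \overline{B} \subset B' \subset \overline{B'} \subset B_1$ and a smooth cutoff $\omega$ with $\omega \equiv 1$ on $B$ and $\mathrm{supp}(\omega) \subset B'$. Solve the dual problem $-\Delta \psi = \phi$ in $B_1$, $\psi = 0$ on $\partial B_1$; the elliptic shift \eqref{eq:elliptic_regularity_general} gives $\|\psi\|_{H^{m+2}(B_1)} \leq C\|\phi\|_{H^m(B)}$. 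Then $\hat\psi := \omega\psi$, extended by zero to $\Omega$, lies in $H^{m+2}(\Omega) \cap H_0^1(\Omega)$ and satisfies $-\Delta \hat\psi = \phi + R$ with $R := -2\nabla\omega\cdot\nabla\psi - \psi \Delta\omega$ supported in $B' \setminus B$ and $\|R\|_{H^{m+1}(B_1)} \leq C\|\phi\|_{H^m(B)}$. The crux is the identity
\[
\int_B \xi\phi \;=\; a(\xi, \hat\psi) \;-\; \int_{B_1 \setminus B} \xi R,
\]
obtained by element-wise integration by parts, using $[\hat\psi] = 0$, the vanishing of boundary-face contributions (since $\hat\psi$ vanishes in a neighbourhood of $\partial\Omega$), and the cancellation $(1+\epsilon)\sum_{e \in \Gamma_h}\int_e [\xi]\,\nabla\hat\psi \cdot \bm{n}_e = 0$ afforded by $\epsilon = -1$. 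Galerkin orthogonality then rewrites $a(\xi, \hat\psi) = a(\xi, \hat\psi - I_h\hat\psi)$ for a continuous Lagrange / Scott--Zhang interpolant $I_h\hat\psi \in V_h^k$, after which localized DG continuity \eqref{eq:continuity_prop} and the standard approximation bound $\vertiii{\hat\psi - I_h\hat\psi}_{\DG(B_1)} \leq C h^{\min(k, m+1)} \|\hat\psi\|_{H^{m+2}(\Omega)}$ bound that piece. The remainder obeys $|\int_{B_1\setminus B} \xi R| \leq \|\xi\|_{H^{-m-1}(B_1)} \|R\|_{H^{m+1}(B_1)}$ since $R \in H_0^{m+1}(B_1)$ by its compact support. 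Taking the supremum over admissible $\phi$ yields \eqref{eq:error_H_s}.

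For \eqref{eq:error_interm_l2}, I would iterate \eqref{eq:error_H_s} on a finite chain $B = B^{(0)} \subset \overline{B^{(0)}} \subset B^{(1)} \subset \cdots \subset B^{(m)} \subset B_1$. The first application at $m = 0$ gives, using $\min(k,1) = 1$,
\[
\|\xi\|_{L^2(B)} \leq C h \vertiii{\xi}_{\DG(B^{(1)})} + C \|\xi\|_{H^{-1}(B^{(1)})}.
\]
Successive applications of \eqref{eq:error_H_s} at levels $j = 1, \ldots, m-1$ replace each negative-norm residual by a $\DG$-term weighted by $h^{\min(k, j+1)} \leq h$ plus a deeper negative-norm residual on a larger set. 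Since $m$ is fixed, the finite sum of $\DG$-contributions is absorbed into $C h \vertiii{\xi}_{\DG(B_1)}$, and the final residual satisfies $\|\xi\|_{H^{-m}(B^{(m)})} \leq \|\xi\|_{H^{-m}(B_1)}$, proving \eqref{eq:error_interm_l2}.

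The main obstacle will be establishing the clean identity $a(\xi, \hat\psi) = -\int_\Omega \xi \Delta\hat\psi$: it rests on a careful element-wise integration by parts in which the face contributions produced by $[\xi]\neq 0$ must cancel exactly against the lifting term of $a$, a cancellation available only in the symmetric case $\epsilon = -1$. A secondary technicality is that for $m \geq 1$ the elliptic shift $\|\psi\|_{H^{m+2}(B_1)} \leq C\|\phi\|_{H^m(B_1)}$ goes beyond what convexity alone provides, so the intermediate sets used in the iteration should be chosen with sufficiently smooth boundary.
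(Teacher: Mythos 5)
Your proposal is correct and follows essentially the same Nitsche--Schatz localization as the paper: a duality argument with a smooth cutoff, the clean identity $a(\xi,\cdot)=\int_\Omega \xi(-\Delta\,\cdot)$ for an $H^2\cap H_0^1$ dual function (which, as you note, uses $\epsilon=-1$), Galerkin orthogonality plus interpolation for the $h^{\min(k,m+1)}$ term, a commutator supported away from $B$ bounded in $H^{-m-1}(B_1)$, and iteration over nested sets for \eqref{eq:error_interm_l2}. The only (immaterial) difference is that the paper cuts off $\xi$ and transfers $\omega$ onto the dual solution $v=Q_\Omega\phi$ via the identity $a(\omega\xi,v)=a(\xi,\omega v)+\mathcal{I}(\xi,\omega v)$, whereas you cut off a locally posed dual solution directly; the resulting commutator terms play identical roles.
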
 
\begin{proof}
 Fix an integer  $m\geq 0$ and denote $\xi = U - \uDG$. 
    Let $\omega \in \mathcal{C}_0^{\infty}(\Omega)$ with $\omega = 1$ in $B$ and $\omega = 0$ in $\Omega \backslash B_0$ where $\bar{B} \subset B_0 \subset \bar{B}_0 \subset B_1$. Note that $\mathrm{supp}(\omega)\subset B_0$. We have  
\begin{align}
\|\xi\|_{H^{-m} (B)} = \|\omega \xi\|_{H^{-m} (B)} \leq \| \omega \xi\|_{H^{-m} (\Omega)} = \sup_{\phi \in H_0^m(\Omega)} \frac{ |\int_{\Omega} \omega \xi\phi| }{\|\phi\|_{H^m(\Omega)}}. \label{eq:l2norm_duality} 
\end{align} 
 Fix $\phi \in H^m_0(\Omega)$  and  define $v = Q_{\Omega} (\phi)$. 
We multiply \eqref{eq:second_duality} with $\omega \xi$ and integrate by parts. Since $v \in H^2(\Omega)$, we have 
\begin{align}
\int_{\Omega} \omega \xi \phi = \sum_{E \in \mesh_h} \int_E \nabla v \cdot \nabla(\omega \xi) - \sum_{e\in \Gamma_h \cup \partial \Omega}  \int_e \{\nabla v\}\cdot \bm{n}_e \omega [\xi]=  a (\omega \xi, v).  \label{eq:first_identity_a} 
\end{align}
In view of \eqref{eq:first_identity_a} and \eqref{eq:elliptic_regularity_general}, \eqref{eq:l2norm_duality} yields
\begin{align}
    \|\xi\|_{H^{-m}(B)} \leq   C  \sup_{v \in H^{m+2}(\Omega)} \frac{| a (\omega \xi,v)|}{\|v\|_{H^{m+2}(\Omega)}}. \label{eq:switching_to_v}
\end{align} 
Observe that 
\begin{align}
 a  (\omega \xi, v) = \sum_{E \in \mesh_h} \int_E \xi \nabla \omega \cdot \nabla v + \sum_{E \in \mesh_h} \int_E \nabla \xi \cdot (\nabla (\omega v) - v \nabla \omega )\nonumber  - \sum_{e\in \Gamma_h \cup \partial \Omega}  \int_e( \{\nabla (\omega v) - v\nabla \omega \}\cdot \bm{n}_e  [\xi].
\end{align}
In addition, with integration by parts and the fact that $v \nabla \omega$ is continuous, we have 
\begin{align}
- \sum_{E \in \mesh_h} \int_E \nabla \xi \cdot (v \nabla \omega) = \sum_{E \in \mesh_h} \int_E \xi \nabla \cdot (v \nabla \omega ) - \sum_{e\in \Gamma_h \cup \partial \Omega} \int_e \{v \nabla \omega\}\cdot \bm{n}_e [\xi].  
\end{align}
Hence, we obtain 
\begin{equation}
a(\omega \xi, v) = a (\xi , \omega v) 
+ \mathcal{I}(\xi,\omega v),\label{eq:moving_omega}
\end{equation}
with 
\[
\mathcal{I}(\xi,\omega v) = \sum_{E\in \mesh_h} \int_E \xi \left(\nabla \omega \cdot \nabla v 
+ \nabla \cdot (v \nabla \omega)\right).
\]
For $E \in \mesh_h$ with $E \cap B_1 \neq \emptyset $, let $y_{h,E} \in \mathbbm{P}^k(E)$ be the Lagrange  interpolant of $\omega v $ satisfying 
\begin{equation}
\|\omega v - y_{h,E} \|_{H^d(E)} \leq C h^{\min(k+1,m+2)-d}  \|\omega v\|_{H^{m+2}(E)}, \quad 0\leq d \leq 2.
 \label{eq:approximation_omegav} \end{equation}
Then, define $\chi_h \in V_h^{k}(\mesh_h)$ as $\chi_h \vert_E = y_{h,E}$ if $\omega v\vert_E \neq 0$ a.e in $E$. Otherwise, $\chi_h|_E= 0$. By construction, for $h$ small enough, all the terms involving elements and edges that do not intersect $B_1$ vanish. Using \eqref{eq:consistency_interm_pb} and continuity properties, we have 
 \begin{equation}
     a (\xi, \omega v ) =  a (\xi, \omega v - \chi_h) \leq C \vertiii{\xi}_{\DG(B_1)} \vertiii{\omega v - \chi_h}_{\DG(B_1)} \label{eq:bound_axiv}
 \end{equation}
 From trace estimates and \eqref{eq:approximation_omegav}, we have
 \begin{equation}
    \vertiii{\omega v - \chi_h}_{\DG(B_1)} \leq C h^{\min(k, m+1) } \|\omega v\|_{H^{m+2}(B_1)} \leq Ch^{\min(k, m+1) } \|v\|_{H^{m+2}(B_1)}  . 
 \end{equation} 
Therefore, \eqref{eq:bound_axiv} becomes
\begin{equation}
 a (\xi, \omega v)  \leq C h^{\min(k, m+1)} \vertiii{\xi}_{\DG(B_1)} \|v\|_{H^{m+2}(B_1)}.   \label{eq:bound_aeomega}
\end{equation}
For the second term in \eqref{eq:moving_omega}, since $\omega \in C^{\infty}(\Omega)$ with $\mathrm{supp}(\omega) \subset B_0$, 
\begin{equation} \mathcal{I}(\xi, \omega v) \leq C \|\xi\|_{H^{-m-1}(B_1)}\|v\|_{H^{m+2}(B_1)}. \label{eq:bound_I} \end{equation}
With \eqref{eq:bound_aeomega} and \eqref{eq:bound_I}, \eqref{eq:switching_to_v} yields \eqref{eq:error_H_s}.  To show \eqref{eq:error_interm_l2}, define a finite sequence of nested convex sets $D_0 = B \subset D_1 \subset \dots \subset D_{m-1}= B_1$ such that
 $\bar{D}_i \subset D_{i+1}$.  Applying \eqref{eq:error_H_s} with $s = 0$ for the sets $D_0 \subset D_1$ yields: 
\begin{equation}
\|\xi\|_{L^2(B)} \leq C h \vertiii{\xi}_{\DG(D_1)} + \|\xi\|_{H^{-1}(D_1)}. 
\end{equation}
Iteratively applying bound \eqref{eq:error_H_s} to the last term in the above inequality yields \eqref{eq:error_interm_l2}.
\end{proof}
\begin{theorem} \label{thm:main_result_local_k2}
Fix a convex set $B  \subset \overline B \subset \Omega$ with $\Lambda \subset \Omega \setminus \overline B$.  Fix 
$ 0 < \theta < \frac{1}{2}$ and $k \geq 2$. There exist $h_0>0$ and a constant $C$ independent of $h$,
\begin{equation}
\|u - \uDG\|_{L^2(B)} \leq C h^{k-\theta}.  \label{eq:local_estimate_result}
\end{equation}
\end{theorem}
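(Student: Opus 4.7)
The plan is to decompose via the triangle inequality
$$\|u - \uDG\|_{L^2(B)} \leq \|u - \uCG\|_{L^2(B)} + \|\uCG - U\|_{L^2(B)} + \|U - \uDG\|_{L^2(B)},$$
where $\uCG$ is the degree-$k$ continuous Galerkin approximation and $U$ is the intermediate solution from \eqref{eq:intermediate_pb}. The first term is handled by the local $L^2$ estimate of Köppl et al.\ \cite{koppl2016local}, which gives order $h^{k+1}$ since $u$ is smooth on a neighborhood of $\overline{B}$ disjoint from $\Lambda$ (by interior elliptic regularity for the harmonic equation satisfied by $u$ away from $\Lambda$). The middle term is treated by mimicking Lemma \ref{lemma:local_l2_bound_cg}: set up the same dual problem, but now use that the Scott--Zhang interpolation error in $W^{1,\infty}(N_1)$ scales like $h^{k}|z|_{W^{k+1,\infty}(N_2)}$ for the smooth dual solution $z$, and combine with the weighted cG estimate of Lemma \ref{lemma:weighted_bd_fe_U} to obtain a bound of order $h^{k+1-\theta}$.

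The bulk of the work is the third term $\|U - \uDG\|_{L^2(B)}$. Choose nested convex sets $B = D_0 \subset D_1 \subset \cdots \subset D_{k} \subset \Omega$, all separated from $\Lambda$. Apply \eqref{eq:error_interm_l2} once to reduce to the local energy norm on $D_1$ together with a negative norm $\|U - \uDG\|_{H^{-m}(D_1)}$, and iteratively apply \eqref{eq:error_H_s} along the chain $D_1, D_2, \ldots$, gaining a factor of $h^{\min(k,\ell+1)}$ at each step. The local energy contribution is controlled by the Chen--Chen bound \eqref{eq:Lemma_chen_chen}, which requires that $\|U\|_{H^{k+1}(\widetilde{D}_\ell)}$ be finite.

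To justify this last quantity, a preparatory lemma is required in the spirit of Lemma \ref{lemma:bounding_U_H2_local}: a local higher-order bound
$$\|U\|_{H^{k+1}(\Omega \setminus N_1)} \leq C\bigl(\|f\|_{L^2(\Lambda)} + \|u\|_{H^{k+1}(\Omega \setminus N_0)}\bigr),$$
proved by the same cutoff-plus-elliptic-regularity argument, iterated $k-1$ additional times; the right-hand side is finite because $u$ is harmonic, hence $C^{\infty}$, away from $\Lambda$. Bootstrapping Lemma \ref{lemma:linking_U_w} supplies the terminal bound $\|U - \uDG\|_{L^2(D_k)} \leq C h^{2-\theta}$ needed to close the iteration for the Chen--Chen energy term.

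The main obstacle is arithmetic bookkeeping between the iteration of \eqref{eq:error_H_s} and the base case: the global bound $\|U - \uDG\|_{L^2(\Omega)} \leq C h^2 \|U\|_{H^2(\Omega)}$ is only $O(h)$ because $\|U\|_{H^2(\Omega)} \lesssim h^{-1}\|f\|_{L^2(\Lambda)}$ by \eqref{eq:boundhU}, so a naive iteration that drops to the global $L^2$ norm loses too much. The fix is to keep the iteration local, stopping at $D_k$ and applying the $h^{2-\theta}$ local estimate from Lemma \ref{lemma:linking_U_w} together with the local $H^{k+1}$ regularity of $U$. Summing the contributions, the energy term produces $h \cdot h^{k} = h^{k+1}$ and the terminal negative norm contributes $h^{k-\theta}$ through the Chen--Chen bound, yielding the stated rate.
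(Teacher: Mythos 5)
Your decomposition and the overall architecture (triangle inequality through $\uCG$ and $U$, Chen--Chen plus negative-norm iteration for the dG term, interior regularity of the harmonic function $U$ away from $\Lambda$) match the paper's proof. However, each of the three terms is closed by an argument that either is not available or does not deliver the claimed rate, and the gaps are not cosmetic. For the first term, \cite{koppl2016local} only treats $k=1$ (with rate $h^2|\ln h|$); there is no $h^{k+1}$ local estimate to cite, and none can hold in the form you state, because every localization carries a global pollution term and the global accuracy of $\uCG$ is capped by the fact that $u\notin H^1(\Omega)$. The paper's Step~1 is precisely the work of controlling that pollution: Nitsche--Schatz reduces $\|u-\uCG\|_{L^2(B)}$ to $h^k\|u\|_{H^k(B_0)}+\|u-\uCG\|_{H^{-k}(\Omega)}$, and the negative norm is bounded by $Ch^{k-\theta}$ via a duality argument that hinges on $\|\nabla(u-\uCG)\|_{L^{4/3}(\Omega)}\leq C+C(\theta)h^{-\theta}\|f\|_{L^2(\Lambda)}$ (see \eqref{eq:estimating_nabla_u}). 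This is where the $-\theta$ in the theorem comes from, and it is absent from your proposal. For the middle term, mimicking Lemma~\ref{lemma:local_l2_bound_cg} cannot exceed $O(h^2)$: the far-field contribution $\|\nabla(z-S_hz)\|_{L^2(\Omega\setminus N_1)}\,\|\nabla(U-\uCG)\|_{L^2(\Omega\setminus N_1)}$ is stuck at $h\cdot h$ because the dual solution $z$, having only $L^2$ data, lies merely in $H^2(\Omega)$ globally, so $S_hz$ gains only one power of $h$ there. The paper avoids this by running the duality with data $\phi\in H_0^k(\Omega)$, so that $Q_\Omega\phi\in H^{k+2}(\Omega)$ and $\|U-\uCG\|_{H^{-k}(\Omega)}\leq Ch^k$.

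For the third term, the way you terminate the iteration does not scale with $k$. The chain of applications of \eqref{eq:error_H_s} ends at a global negative norm $\|U-\uDG\|_{H^{-k}(\Omega)}$; bounding the tail instead by the local $L^2$ estimate $Ch^{2-\theta}$ of Lemma~\ref{lemma:linking_U_w} (which is intrinsically a second-order statement, tied to the weighted estimates of Section~\ref{sec:weighted_global_conv}) gives at best $h^{3-\theta}$ after one multiplication by $h$, which is insufficient for $k\geq 4$ and only marginal for $k=3$; the "bootstrapping" you invoke is not spelled out and would need its own induction. The paper closes the loop differently: by symmetry of $a$, consistency \eqref{eq:consistency_interm_pb}, and $\vertiii{U-\uDG}_{\DG}\leq Ch\|U\|_{H^2(\Omega)}\leq C$, a direct duality argument yields $\|U-\uDG\|_{H^{-k}(\Omega)}\leq Ch^k$ with no $\theta$ loss, and the Chen--Chen energy contribution is handled with the global bound $\|U-\uDG\|_{L^2(\Omega)}\leq Ch$ multiplied by the accumulated factor $h^k$. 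Your local $H^{k+1}$ bound on $U$ (cutoff plus elliptic regularity, iterated) is essentially the paper's combination of Lemma~\ref{lemma:bounding_U_H2_local} with interior regularity for harmonic functions, so that piece is fine; it is the three closing estimates above that need to be replaced by the duality arguments.
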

\begin{remark}
We remark that this result is not optimal. However, it is an improvement to the order of convergence provided in Theorem \ref{theorem:global_estimate}. In addition, it allows us to show almost optimal estimates for the local energy norm, see Section \ref{subsec:local_energy_estimate},
\end{remark}
\begin{proof} 
First, we apply the triangle inequality to obtain 
\begin{equation}
\|u-\uDG\|_{L^2(B)} \leq \|u-\uCG\|_{L^2(B)} + \|\uCG - U\|_{L^2(B)} + \|U - \uDG\|_{L^2(B)}. \label{eq:triangle_local}
\end{equation} 
The remainder of the proof will consist of bounding each of the above terms. We divide this task into several steps. 
We select  convex sets $B_0, B_1, \dots, B_k$ with $ \bar{B} \subset B_0$,  $\bar{B_i} \subset B_{i+1}$ for
$i=0,\dots,k-1$, $\overline B_k \subset \Omega$ and $\Lambda \subset \Omega \setminus \overline B_k$.

\textit{Step 1:} Bounding $\|u-\uCG\|_{L^2(B)}$:  Since $W_h^k(\mesh_h) \subset W_0^{1,q}(\Omega)$, we have the following Galerkin orthogonality property. 
      \begin{equation}\int_{\Omega} \nabla (u-\uCG) \cdot \nabla v_h = 0, \quad \forall v_h \in W_h^{k}(\mesh_h).\label{eq:consistency_fem_original}
    \end{equation}
    Thus, we apply Theorem 5.1 in  \cite{nitsche1974interior}. There exists $h_1\geq 0$ such that for all $h\leq h_1$, we have 
    \begin{equation} 
        \|u-\uCG\|_{L^2(B)} \leq C \left( h^k \|u\|_{H^k(B_0)} + \|u-\uCG\|_{H^{-k}(\Omega)}\right). \label{eq:bounding_first_term_local_neg1}
    \end{equation} 
To estimate the second term, 
fix $\phi \in  H^{k}_0(\Omega)$. 
Observe that with a Sobolev embedding result and \eqref{eq:elliptic_regularity_general}, we have 
\begin{align*}
    \|Q_\Omega(\phi) \|_{W^{k+1,4}(\Omega)} \leq C \|Q_\Omega(\phi) \|_{H^{k+2} (\Omega)} \leq C \|\phi\|_{H^k(\Omega)} .
\end{align*}
We denote by $v_h$ the Scott-Zhang  interpolant of $Q_\Omega(\phi)$; we have
\[ \|\nabla( Q_\Omega(\phi)  -v_h)\|_{L^4(\Omega)}  \leq  Ch^k \|Q_\Omega(\phi)\|_{W^{k+1,4}(\Omega)} \leq Ch^k \|\phi\|_{H^{k} (\Omega)}. \] 
We multiply \eqref{eq:second_duality} by $u-\uCG$ and integrate by parts. By \eqref{eq:consistency_fem_original}, we have 
\begin{multline}
\int_{\Omega}(u-\uCG) \phi = \int_{\Omega} \nabla ( Q_\Omega(\phi) - v_h) \cdot \nabla (u-\uCG) \leq  \| \nabla (Q_\Omega(\phi)- v_h) \|_{L^4(\Omega)}    \|\nabla (u-\uCG) \|_{L^{4/3}(\Omega)} \\ \leq  Ch^k\|\phi\|_{H^{k}(\Omega)} \|\nabla (u-\uCG) \|_{L^{4/3}(\Omega)}.\label{eq:duality_og_local}
\end{multline}
Let $S_h u$ be the Scott--Zhang interpolant of $u$. With the stability of the interpolant,\eqref{eq:3d_1d_inverse_esimate_nabla}, and \eqref{eq:improved_L2estimate_F.E_h},  we have
\begin{align}
 \|\nabla (u-\uCG) \|_{L^{4/3}(\Omega)} \leq  \|\nabla (u-S_h u ) \|_{L^{4/3}(\Omega)} + \|\nabla (S_h u -\uCG) \|_{L^{4/3}(\Omega)} \nonumber \\ \leq C|u|_{W^{1,4/3}(\Omega)} + h^{-1} \|S_h u -\uCG \|_{L^{4/3}(\Omega)} 
\leq  C|u|_{W^{1,4/3}(\Omega)} + C(\theta) h^{-\theta}  \Vert f \Vert_{L^2(\Lambda)}.   \label{eq:estimating_nabla_u}
\end{align}
With \eqref{eq:estimating_nabla_u} and \eqref{eq:duality_og_local},  we have
\begin{equation}
\|u -\uCG \|_{H^{-k}(\Omega)} \leq Ch^{k-\theta}.  \label{eq:bounding_first_term_local_0}
\end{equation}
From \eqref{eq:bounding_first_term_local_0} and \eqref{eq:bounding_first_term_local_neg1}, we have 
\begin{equation} 
    \|u-\uCG\|_{L^2(B)} \leq C h^{k-\theta}. \label{eq:bounding_first_term_local} 
\end{equation} 
\textit{Step 2:} Bounding $\|U-\uCG\|_{L^2(B)}$:
 Let $N$ be a neighborhood of $\Lambda$ such that   $\overline B_k \subset \Omega \backslash N$. 
 There exists $h_2>0$ such that for all $h\leq h_2$ , 
$-\Delta U = 0$ in $\Omega \backslash N$.  
Theorem 8.10 in \cite{gilbarg2015elliptic} and Lemma \ref{lemma:bounding_U_H2_local} yield: 
\begin{equation}
\|U\|_{H^{k+1}( B_k )} \leq C \|U\|_{H^1(\Omega \backslash N)} \leq C. \label{eq:interior_reg_U_B}
\end{equation}
An application of Theorem 5.1 in \cite{nitsche1974interior} yields, for $h$ small enough, say $h\leq h_2$, for some $h_2\geq 0$:
\begin{equation}
\|U-\uCG\|_{L^2(B)} \leq C h^k\|U\|_{H^k(B_0)} + C \|U-\uCG\|_{H^{-k}(\Omega)}. \label{eq:l2_interm_cg_U}
\end{equation}
We perform a similar duality argument as above. For any $\phi \in  H_0^k(\Omega)$, we denote $z = Q_{\Omega} \phi$
and $S_h z$ the Scott-Zhang interpolant of $z$,  
\begin{multline}
\int_{\Omega} (U-\uCG) \phi = \int_{\Omega} \nabla(z - S_h z) \cdot \nabla(U - \uCG) \leq Ch^k \|z\|_{H^{k+1}(\Omega)} \|\nabla(U-\uCG)\| \\ \leq Ch^{k} \|\phi\|_{H^k(\Omega)} \|\nabla(U-\uCG)\|.  \label{eq:localkint}
\end{multline}
The last inequality holds by \eqref{eq:elliptic_regularity_general}. Noting that \eqref{eq:bounding_first_term} holds for the finite element solution $\uCG$ of any degree $k$, we have from \eqref{eq:localkint} 
\begin{equation}
 \|U -\uCG\|_{H^{-k}(\Omega)} \leq Ch^{k} \nonumber.  
\end{equation}
The above bound with \eqref{eq:l2_interm_cg_U} implies that \begin{equation} 
    \|U - \uCG\|_{L^2(B)} \leq Ch^{k}.
\label{eq:bounding_second_term_local}
\end{equation}
 \textit{Step 3}: Bounding $\|U - \uDG\|_{L^2(B)}$:
We denote $\xi = U-\uDG$ and we iteratively use \eqref{eq:Lemma_chen_chen} and \eqref{eq:error_interm_l2} for the nested sets $B \subset B_0 \subset \ldots \subset B_{k}$. 
We obtain 
\begin{equation}
\|\xi\|_{L^2(B)} \leq C (h^{k+1}  \Vert U \Vert_{H^{k+1}(B_k)}  + h^k \|\xi\|_{L^2( \Omega )}) +  C  \|\xi\|_{H^{-k}(\Omega)}. 
\end{equation}
To estimate $\|\xi\|_{H^{-k}(\Omega)}$, we also use a duality argument. Let $\phi \in  H_0^{k}(\Omega) $ be given and
let $v = Q_\Omega \phi$. We multiply \eqref{eq:second_duality} by $v$, integrate by parts, use \eqref{eq:consistency_interm_pb}, the symmetry of $a(\cdot,\cdot)$, and \eqref{eq:xiDGfull}. 
\begin{multline}
\int_\Omega \phi \xi = a(v, \xi) = a(v - S_h v , \xi ) \leq C \vertiii{v - S_h v}_{\DG}\vertiii{\xi}_{\DG}  \leq Ch^{k} \|v\|_{H^{k+1}(\Omega)} \leq C h^k \|\phi\|_{H^{k}(\Omega) } . 
\end{multline} 
This implies that 
$$\|\xi\|_{H^{-k}(\Omega)} \leq Ch^k. $$ 
With the global estimate \eqref{eq:global_bd_dg}, the bound \eqref{eq:interior_reg_U_B}, 
and the above bound, we finally have that 
\begin{equation}
\|\xi\|_{L^2(B)} \leq Ch^k. \label{eq:bounding_third_term_local}
\end{equation}
This concludes the proof.  
\end{proof}
\subsection{Local energy estimate} \label{subsec:local_energy_estimate}
With the local $L^2$ results of the previous sections, we show a local energy estimate.  The second bound \eqref{eq:local_dg_k1}
is a stronger result in the sense that it is valid up to the boundary of $\Omega$ whereas \eqref{eq:local_dg_k2} is valid
for a domain that does not intersect with the boundary. 
\begin{theorem}\label{theorem:local_energy_bd}
Let Assumptions~\textbf{A.1.} and~\textbf{A.2.} hold.   Fix a convex set $B  \subset \overline B \subset \Omega$ with $\Lambda \subset \Omega \backslash \overline B$.  Fix 
$ \theta \in (0, \frac{1}{2})$ and $k \geq 1$. There exist $h_0>0$ and a constant $C$ independent of $h$
such that for all $h\leq h_0$
    \begin{equation}
        \|u - \uDG\|_{\DG(B)} \leq Ch^{k-\theta}. \label{eq:local_dg_k2}
    \end{equation}
    In addition, for $k = 1$ and  for any neighborhood  $N \subset \Omega$ such that $\Lambda \subset N$, 
    \begin{equation}
        \|u - \uDG\|_{\DG(\Omega \backslash N)} \leq Ch^{1-\theta}. \label{eq:local_dg_k1}
    \end{equation}
    \end{theorem}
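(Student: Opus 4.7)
The plan is to use the decomposition based on the intermediate problem $U$ from \eqref{eq:intermediate_pb}:
\[
\|u - \uDG\|_{\DG(B)} \leq \|u - U\|_{\DG(B)} + \|U - \uDG\|_{\DG(B)},
\]
so the task splits into bounding the approximation error to the intermediate problem and the difference between the two exact solutions on a region excluding $\Lambda$.

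For $\|U - \uDG\|_{\DG(B)}$, I would select convex sets $B \subsetneq B' \subsetneq B'' \subset \Omega$ with $\Lambda \subset \Omega \setminus \overline{B''}$, and apply the Chen--Chen local energy estimate \eqref{eq:Lemma_chen_chen} with $D = B$, $\widetilde{D} = B'$, yielding
\[
\vertiii{U - \uDG}_{\DG(B)} \leq C \bigl( h^k \|U\|_{H^{k+1}(B')} + \|U - \uDG\|_{L^2(B')} \bigr).
\]
For $h$ small enough, $f_h$ vanishes on a neighborhood of $B''$, so $-\Delta U = 0$ there; combining Theorem~8.10 in \cite{gilbarg2015elliptic} (interior Sobolev regularity of harmonic functions, as used in \eqref{eq:interior_reg_U_B}) with Lemma~\ref{lemma:bounding_U_H2_local} gives $\|U\|_{H^{k+1}(B')} \leq C$. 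The $L^2$ term is then handled by the triangle inequality $\|U - \uDG\|_{L^2(B')} \leq \|u - \uDG\|_{L^2(B')} + \|u - U\|_{L^2(B')}$, where the first summand is controlled by Theorem~\ref{thm:main_result_local_k2} for $k \geq 2$ and by Theorem~\ref{thm:local_l2_k1} for $k=1$, and the second by the triangle inequality together with Lemmas~\ref{lemma:local_l2_bound_cg} and \eqref{eq:improved_L2estimate_F.E_h}.

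For $\|u - U\|_{\DG(B)}$, note that both $u$ and $U$ satisfy $-\Delta (u-U) = 0$ on a neighborhood of $B$ (for $h$ small, since $\mathrm{supp}\, f\delta_\Lambda$ and $\mathrm{supp}\, f_h$ lie in a thin tube around $\Lambda$). Hence $u - U \in C^\infty$ on $B''$ with no interelement jumps, so $\|u-U\|_{\DG(B)} = \|\nabla(u-U)\|_{L^2(B)}$. Interior regularity for the Laplace equation gives $\|\nabla(u-U)\|_{L^2(B)} \leq C \|u - U\|_{L^2(B')}$, which by the triangle inequality and Lemmas~\ref{lemma:local_l2_bound_cg} and \eqref{eq:improved_L2estimate_F.E_h} is $\mathcal{O}(h^{2-\theta})$. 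This is subsumed in $h^{k-\theta}$ for all $k\geq 1$.

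For the stronger statement \eqref{eq:local_dg_k1} with $k=1$ on $\Omega \setminus N$, the same argument works after choosing an inner neighborhood $N_0 \subset N$ of $\Lambda$ with $\overline{N_0} \subsetneq N$, and applying \eqref{eq:Lemma_chen_chen} with $D = \Omega \setminus N$ and $\widetilde{D} = \Omega \setminus N_0$ (whose boundary relative components have positive separation). The harmonic piece $u - U$ now satisfies a zero Dirichlet condition on $\partial \Omega$ in addition to being harmonic on $\Omega \setminus N_0$, so combining boundary elliptic regularity (using convexity of $\Omega$) with Lemma~\ref{lemma:bounding_U_H2_local} still gives an $\mathcal{O}(h^{2-\theta})$ contribution, while the local $L^2$ error $\|U - \uDG\|_{L^2(\Omega \setminus N_0)}$ is bounded in Lemma~\ref{lemma:linking_U_w}. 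The main obstacle I anticipate is the careful treatment of the harmonic piece $u - U$ up to $\partial \Omega$ in this second part: the argument must combine interior regularity with boundary regularity (Grisvard-type results on convex domains) without losing powers of $h$, and one must rigorously justify the vanishing of $\Delta(u-U)$ in the distributional sense on $\Omega \setminus N_0$ (which follows because $\mathrm{supp}(f\delta_\Lambda)$ and $\mathrm{supp}(f_h)$ are both contained in $N_0$ for $h$ small).
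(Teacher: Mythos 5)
Your overall architecture is close to the paper's: both proofs reduce the local energy estimate to the Chen--Chen bound \eqref{eq:Lemma_chen_chen} for $U-\uDG$, interior regularity of $U$ (i.e.\ \eqref{eq:interior_reg_U_B}), and the local $L^2$ estimates already established. Where you genuinely diverge is in the treatment of the remaining piece: the paper splits $u-\uDG$ into \emph{three} terms through $\uCG$ and applies Wahlbin's Theorem~9.1 directly in the energy norm to $u-\uCG$ and $U-\uCG$, each of which contributes $h^k\Vert\cdot\Vert_{H^{k+1}(B_0)}$ plus a local $L^2$ error; you instead use the two-term split $u-\uDG=(u-U)+(U-\uDG)$ and exploit that $u-U$ is harmonic near $B$ (Caccioppoli/interior regularity) to convert the energy norm of $u-U$ into a local $L^2$ norm. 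That idea is viable and arguably more elementary for the exact-solution difference, and your remarks on the distributional vanishing of $\Delta(u-U)$ and on the boundary case for \eqref{eq:local_dg_k1} identify the right technical points.

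However, there is a concrete gap in how you close the argument. You bound $\Vert u-U\Vert_{L^2(B')}$ (and the analogous term inside $\Vert U-\uDG\Vert_{L^2(B')}$) by the triangle inequality through $\uCG$ using Lemma~\ref{lemma:local_l2_bound_cg} and the \emph{global} estimate \eqref{eq:improved_L2estimate_F.E_h}. The latter only gives $\Vert u-\uCG\Vert_{L^2(\Omega)}\leq Ch^{1-\theta}$, so your claimed $\mathcal{O}(h^{2-\theta})$ for $\Vert u-U\Vert_{L^2(B')}$ is actually only $\mathcal{O}(h^{1-\theta})$; moreover, even a genuine $\mathcal{O}(h^{2-\theta})$ bound is \emph{not} ``subsumed in $h^{k-\theta}$ for all $k\geq 1$'' --- it fails for $k\geq 3$. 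As written, your argument therefore proves \eqref{eq:local_dg_k2} only for $k=1$ (and, with the correct local citation, $k=2$). The repair is to replace \eqref{eq:improved_L2estimate_F.E_h} and Lemma~\ref{lemma:local_l2_bound_cg} by the \emph{local} $L^2$ bounds established in Steps~1 and~2 of the proof of Theorem~\ref{thm:main_result_local_k2}, namely $\Vert u-\uCG\Vert_{L^2(B_0)}\leq Ch^{k-\theta}$ \eqref{eq:bounding_first_term_local} and $\Vert U-\uCG\Vert_{L^2(B_0)}\leq Ch^{k}$ \eqref{eq:bounding_second_term_local}, which give $\Vert u-U\Vert_{L^2(B')}\leq Ch^{k-\theta}$ for every $k$; similarly, $\Vert U-\uDG\Vert_{L^2(B')}\leq Ch^{k}$ is available directly from \eqref{eq:bounding_third_term_local} without the detour through $u$. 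With these substitutions your route closes and matches the paper's rate.
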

    \begin{proof}
By the triangle inequality, we have 
    \begin{equation}
        \|u - \uDG\|_{\DG(B)} \leq \| u -\uCG\|_{\DG(B)} + \| \uCG - U\|_{\DG(B)} + \| U - \uDG\|_{\DG(B)}. \label{eq:main_tr_theorem2}
    \end{equation}
    We proceed by providing bounds on each of the terms above. Let  $B_0$ be a convex  set such that $B\subset  \overline B  \subset B_0$ and $\Lambda \subset \Omega \setminus \overline B_0$.  Theorem 9.1 in \cite{wahlbin1991local} applied to problems \eqref{eq:model_pb} and \eqref{eq:intermediate_pb} results in the following two bounds. There exists $h_0>0$ such that for all $h\leq h_0$, 
    \begin{align}
         \Vert u-\uCG\Vert_{\mathrm{DG}(B)} =  \|\nabla(u - \uCG) \|_{L^2(B)} &\leq C( h^k\|u\|_{H^{k+1}(B_0)} + \|u -\uCG\|_{L^2(B_0)} ), \label{eq:interior_est_1} \\ 
         \Vert U-\uCG\Vert_{\mathrm{DG}(B)} =  \|\nabla(U- \uCG) \|_{L^2(B)} &\leq C(h^k\|U\|_{H^{k+1}(B_0)} + \|U -\uCG\|_{L^2(B_0)}). \label{eq:interior_est_2}
    \end{align}
  We apply Lemma 4.1 by Chen and Chen \cite{chen2004pointwise}:  \eqref{eq:Lemma_chen_chen} with $D=B$ and $\widetilde{D} = B_0$.  We obtain:
\begin{equation}
    \|U -\uDG \|_{\DG(B)}  \leq C (h^k \|U\|_{H^{k+1}(B_0)} + \| U -\uDG \|_{L^2(B_0)} ).\label{eq:interior_est_3}
     \end{equation}
    Employing bounds \eqref{eq:interior_est_1}, \eqref{eq:interior_est_2} and \eqref{eq:interior_est_3} in \eqref{eq:main_tr_theorem2}, 
we obtain 
    \begin{multline}
    \| u -\uDG\|_{\DG(B)} \leq Ch^k (\|u\|_{H^{k+1}(B_0)}+ \|U\|_{H^{k+1}(B_0)}) \\ + C(\|u -\uCG\|_{L^2(B_0)} +\|U -\uCG\|_{L^2(B_0)}  +\| U -\uDG \|_{L^2(B_0)}).\label{eq:interm_bd_theor2}
    \end{multline}
    Using \eqref{eq:bounding_first_term_local}, \eqref{eq:bounding_second_term_local} and \eqref{eq:bounding_third_term_local} in \eqref{eq:interm_bd_theor2} yields, 
    \begin{equation}
    \| u -\uDG\|_{\DG(B)} \leq Ch^k(\|u\|_{H^{k+1}(B_0)}+ \|U\|_{H^{k+1} (B_0)}) + C h^{k-\theta}.
    \end{equation}
    We conclude that \eqref{eq:local_dg_k2} holds  by using bound \eqref{eq:interior_reg_U_B} in the above estimate. The proof of  bound \eqref{eq:local_dg_k1} follows the same lines: we apply \eqref{eq:interior_est_3} with $B = \Omega\setminus N$ and $B_0 = \Omega\setminus \widetilde{N}$ where $N\subset \widetilde{N}$. 
    \end{proof}

\section{The parabolic problem}\label{sec:parabolic_analysis}
In this section, we consider the time dependent problem \eqref{eq:parabolic_1}-\eqref{eq:parabolic_3} with a Dirac line source. 
The domain $\Omega$ is assumed to be convex, the curve $\Lambda$ is a $\mathcal{C}^2$ curve such that $|E\cap \Lambda|\leq C h$
for any $E\in\mathcal{E}_h$. 
 A very weak solution $u$ to \eqref{eq:parabolic_1}-\eqref{eq:parabolic_3} can be defined via the method of transposition, see \cite{gong2013error,gong2016finite}. To this end, for a given function $g \in L^2(0,T;L^2(\Omega))$, define the backward in time parabolic problem: 
\begin{alignat}{2}
  - \partial_t \psi - \Delta \psi & = g,  && \quad \mathrm{in }\,\,  \Omega \times (0,T], \label{eq:backward_parabolic_1}\\   
  \psi  & = 0, && \quad \mathrm{on}\,\,  \partial \Omega \times (0,T], \label{eq:backward_parabolic_2} \\ 
  \psi(T) & = 0,  && \quad \mathrm{in} \,\,  \{T\} \times \Omega.  \label{eq:backward_parabolic_3}
\end{alignat}
The solution $\psi$ belongs to $L^2(0,T;H^2(\Omega))$ and the following bounds hold (see Theorem 5 in Section 7.1.3 
and Theorem 4 in Section 5.9.2 in \cite{evans2010partial})
\begin{equation}
\Vert \psi \Vert_{L^{\infty}(0,T;H^1(\Omega))} \leq C\left(\|\psi\|_{L^2(0,T;H^2(\Omega))} + \|\partial_t \psi \|_{L^2(0,T;L^2(\Omega))} \right) \leq C \|g\|_{L^2(0,T; L^2(\Omega))}.\label{eq:parabolic_regularity_backward}
\end{equation}
If for all $g\in L^2(0,T;L^2(\Omega))$, $u$ satisfies   
\begin{align}
  \int_0^T \int_\Omega u g = \int_{0}^T \int_{\Lambda} f \psi + \int_\Omega u^0 \psi(0),    \label{eq:linear_functional_g}
\end{align}
where $\psi \in L^{2}(0,T; H^2(\Omega))$ solves \eqref{eq:backward_parabolic_1}-\eqref{eq:backward_parabolic_3},  then $u$ is referred to as a very weak solution to \eqref{eq:parabolic_1}-\eqref{eq:parabolic_3}. From a Sobolev inequality and \eqref{eq:parabolic_regularity_backward}, we have 
\begin{align*}
  \left| \int_0^T \int_\Omega u g \right| &\leq \|f\|_{L^2(0,T;L^2(\Lambda))} \|\psi\|_{L^2(0,T;L^{\infty}(\Omega))} + \|u^0\|_{L^2(\Omega)}\|\psi\|_{L^{\infty}(0,T;L^2(\Omega))} \\ &
  \leq C \left( \|f\|_{L^2(0,T;L^2(\Lambda))}\|\psi\|_{L^2(0,T;H^2(\Omega))} + \|u^0\|_{L^2(\Omega)}   \|\psi\|_{L^{\infty}(0,T;L^2(\Omega))} \right) \\ &\leq C ( \|f\|_{L^2(0,T;L^2(\Lambda))} + \|u^0\|_{L^2(\Omega)} ) \|g\|_{L^2(0,T; L^2(\Omega))}.
\end{align*} 
Hence, the right hand side of \eqref{eq:linear_functional_g} defines a bounded linear functional on $L^2(0,T;L^2(\Omega))$. Thus, with the Lax-Milgram Theorem, a unique solution $u$ exists in the sense of \eqref{eq:linear_functional_g}. In addition, if $u^0 \in H^1(\Omega)$, then the very weak solution $u$ belongs to  $L^2(0,T;W^{1,\sigma}(\Omega)) \cap H^1(0,T;W^{-1,\sigma}(\Omega))$ for $\sigma  \in (1,2)$ and  
satisfies \cite{gong2016finite} 
\begin{equation}
\int_0^T \langle \partial_t u ,v \rangle + \int_0^T (\nabla u ,\nabla v)_\Omega = \int_0^T \int_{\Lambda} f v, \quad \forall v \in L^2(0,T;W^{1,\sigma'}_0 (\Omega)). 
\end{equation} 
We denote by $(\cdot, \cdot)_\Omega$  the $L^2$ inner product over $\Omega$. In the above, $\sigma'$ is the conjugate pair of $\sigma$,  $W^{-1,\sigma} (\Omega)$  is the dual space of $W^{1,\sigma'}_0(\Omega)$, and $\langle \cdot, \cdot \rangle$ denotes the duality pairing between $L^2(0,T;W_0^{1,\sigma}(\Omega))$ and $L^2(0,T;W^{-1,\sigma}(\Omega))$.
\subsection{Semi-discrete formulation} We introduce the continuous in time  dG approximation $\uDG(t)$ which belongs to $V_h^{k}(\mesh_h)$ for all $t > 0$ and satisfies:
\begin{alignat}{2}
  \int_\Omega \frac{\partial}{\partial t} \uDG(t) v + a(\uDG(t),v) &= \int_{\Lambda} f(t) v, \quad \forall t>0, \quad \forall v\in V_h^{k}(\mesh_h),\label{eq:semi_discrete_dG_1} \\ 
  \int_\Omega \uDG(0) v & = \int_\Omega u^0 v, \quad \forall v\in V_h^{k}(\mesh_h). \label{eq:semi_discrete_init}
\end{alignat}
We recall that $a$ is the symmetric bilinear form ($\epsilon = -1$ in \eqref{eq:dg_bilinear_form} and $\beta = 1$). 
We also introduce the dG approximation $\psi_h(t) \in V_h^{k}(\mesh_h)$ to $\psi(t)$ the solution of \eqref{eq:backward_parabolic_1}-\eqref{eq:backward_parabolic_3}. 
\begin{alignat}{2}
 - \int_\Omega \frac{\partial}{\partial t} \psi_h(t) \, v + a(\psi_h(t),v) &= \int_{\Omega} g(t) v, \quad \forall 0\leq t<T, \quad \forall v\in V_h^{k}(\mesh_h),\label{eq:semi_discrete_dG_backward} \\ 
  \psi_h(T) & = 0 . 
\end{alignat}
The main goal of this section is to establish a global estimate in $L^2(0,T;L^2(\Omega))$ for the error $\uDG-u$, see Theorem \ref{theorem:convergence_semidiscrete_3d1d}. We first establish estimates for the error $\psi_h(t) - \psi(t)$. Such estimates  that depend on the
time derivative of $\psi$ are standard \cite{riviere2008discontinuous}.  Here, we follow the arguments in \cite{chrysafinos2002error} 
and derive error bounds with constants that depend only on $\psi$ and not on $\partial_t \psi$. 
\begin{lemma} \label{lemma:linfinity_conv}
There exists a constant $C$ independent of $h$ such that  
\begin{equation}
\|\psi(0) - \psi_h(0)\|_{L^2(\Omega)} +  \|\psi-\psi_h\|_{L^2(0,T;\DG)} \leq Ch\left( \|\psi\|_{L^\infty(0,T;H^1(\Omega))} + \|\psi \|_{L^2(0,T;H^2(\Omega))} \right).
\end{equation} 
\end{lemma}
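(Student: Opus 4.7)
The plan is to decompose the error via an elliptic projection and then derive a backward-in-time energy estimate in which all appearances of $\partial_t\psi$ are eliminated through integration by parts in time. Let $\widetilde{\Pi}_h : H^2(\Omega)\cap H_0^1(\Omega) \to V_h^k(\mesh_h)$ denote the elliptic projection associated with $a(\cdot,\cdot)$, i.e.\ $a(w - \widetilde{\Pi}_h w, v_h) = 0$ for all $v_h \in V_h^k(\mesh_h)$; standard estimates for the symmetric interior penalty form give
\[
\|w - \widetilde{\Pi}_h w\|_{L^2(\Omega)} + h\|w - \widetilde{\Pi}_h w\|_{\DG} \leq C h^2 \|w\|_{H^2(\Omega)}.
\]
Write $\psi - \psi_h = \eta + \theta_h$ with $\eta = \psi - \widetilde{\Pi}_h\psi$ and $\theta_h = \widetilde{\Pi}_h\psi - \psi_h$. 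The $\eta$-contribution to $\|\psi-\psi_h\|_{L^2(0,T;\DG)}$ is immediately bounded by $Ch\|\psi\|_{L^2(0,T;H^2(\Omega))}$ via the approximation inequality above; since $\psi_h(0)$ is the unique backward-dG value at $t=0$, it remains to control $\theta_h(0)$ in $L^2$ and $\theta_h$ in $L^2(0,T;\DG)$.

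Subtracting the discrete equation \eqref{eq:semi_discrete_dG_backward} from its counterpart satisfied by $\psi$ and using the elliptic projection property to kill $a(\eta,v)$, I obtain the error equation
\[
-\int_\Omega \partial_t \theta_h v + a(\theta_h, v) = \int_\Omega \partial_t \eta\, v, \qquad \forall v \in V_h^k(\mesh_h).
\]
Choosing $v = \theta_h$, integrating in $s \in (0,T)$, and using $\theta_h(T)=0$ yields
\[
\tfrac{1}{2}\|\theta_h(0)\|_{L^2(\Omega)}^2 + \int_0^T a(\theta_h,\theta_h)\, ds = \int_0^T\!\!\int_\Omega \partial_s \eta\,\theta_h\, ds.
\]
The coercivity \eqref{eq:coercivity_property} bounds the left-hand side below by $\tfrac12\|\theta_h(0)\|_{L^2}^2 + \tfrac12 \|\theta_h\|_{L^2(0,T;\DG)}^2$.

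The main technical step, and the one where the Chrysafinos–Walkington trick is essential, is the bound on the right-hand side without invoking $\partial_t\psi$. I would integrate by parts in time,
\[
\int_0^T\!\!\int_\Omega \partial_s \eta\, \theta_h\, ds = -\int_\Omega \eta(0)\theta_h(0) - \int_0^T\!\!\int_\Omega \eta\, \partial_s \theta_h\, ds,
\]
using $\theta_h(T)=0$. The boundary term is handled by Cauchy–Schwarz and Young's inequality together with the approximation bound $\|\eta(0)\|_{L^2(\Omega)} \leq C h \|\psi(0)\|_{H^1(\Omega)} \leq C h \|\psi\|_{L^\infty(0,T;H^1(\Omega))}$ (this is where the $L^\infty(H^1)$ norm of $\psi$ enters). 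For the second term, I would replace $\eta$ by $\eta - P_h \eta + P_h \eta$ where $P_h$ is the $L^2$-projection onto $V_h^k(\mesh_h)$; the component $\eta - P_h\eta$ is $L^2$-orthogonal to $\partial_s\theta_h \in V_h^k$ and drops, while for the admissible test function $P_h\eta$ I substitute the error equation to rewrite
\[
\int_0^T\!\!\int_\Omega P_h\eta\,\partial_s\theta_h\, ds = \int_0^T a(\theta_h, P_h\eta)\, ds - \int_0^T\!\!\int_\Omega \partial_s\eta\,P_h\eta\, ds.
\]
The first integral is absorbed into $\|\theta_h\|_{L^2(0,T;\DG)}^2$ via continuity and Young's inequality, using $\|P_h\eta\|_{\DG} \leq C\|\eta\|_{\DG}$ and the approximation bound in the $\DG$-norm. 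For the remaining $\int \partial_s\eta\,P_h\eta$, I would integrate by parts once more; the resulting integrands involve only $\eta$ at the endpoints (controlled by $\|\psi\|_{L^\infty(0,T;H^1(\Omega))}$) and $\int_0^T \int_\Omega \eta\, \partial_s P_h\eta$, which equals $\tfrac12 \frac{d}{ds}\|P_h\eta\|_{L^2}^2$ up to orthogonality corrections and is therefore again a boundary term in time.

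Combining these bounds and absorbing the $\theta_h$-terms on the right by Young's inequality leads to
\[
\|\theta_h(0)\|_{L^2(\Omega)}^2 + \|\theta_h\|_{L^2(0,T;\DG)}^2 \leq C\bigl( \|\eta\|_{L^\infty(0,T;L^2(\Omega))}^2 + \|\eta\|_{L^2(0,T;\DG)}^2\bigr),
\]
and the approximation estimates $\|\eta\|_{L^\infty(0,T;L^2(\Omega))} \leq Ch\|\psi\|_{L^\infty(0,T;H^1(\Omega))}$ together with $\|\eta\|_{L^2(0,T;\DG)} \leq Ch\|\psi\|_{L^2(0,T;H^2(\Omega))}$ finish the proof. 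The main obstacle I anticipate is the careful bookkeeping in the nested integrations by parts in time, in particular ensuring that all $\partial_t \psi$ contributions cancel and that the boundary term at $s=0$ produces only norms of $\psi(0)$ controlled by $\|\psi\|_{L^\infty(0,T;H^1(\Omega))}$.
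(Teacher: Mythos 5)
Your overall strategy is the right one and is close in spirit to the paper's: both arguments hinge on the elliptic projection to remove $a(\eta,v)$ and on $L^2$-projection orthogonality to convert every occurrence of $\partial_t\psi$ into an exact time derivative that integrates to endpoint values, so that the final constant involves only $\|\psi\|_{L^\infty(0,T;H^1(\Omega))}$ and $\|\psi\|_{L^2(0,T;H^2(\Omega))}$. However, two steps in your execution do not go through under the available regularity, and they are precisely the delicate points of this lemma.

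First, your error equation $-(\partial_t\theta_h,v)_\Omega+a(\theta_h,v)=(\partial_t\eta,v)_\Omega$ treats $\partial_t\eta$ and $\partial_t\theta_h$ as separate well-defined objects. This requires $t\mapsto \widetilde{\Pi}_h\psi(t)$ to be weakly differentiable in time, i.e.\ essentially $\widetilde{\Pi}_h(\partial_t\psi)$ to make sense. But the regularity \eqref{eq:parabolic_regularity_backward} only gives $\partial_t\psi\in L^2(0,T;L^2(\Omega))$, and the elliptic projection for the interior penalty form is not defined on $L^2(\Omega)$ (the form $a(w,v_h)$ needs face traces of $\nabla w$). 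Only the combination $\partial_t\theta_h+\partial_t\eta=\partial_t(\psi-\psi_h)$ is legitimate; the paper's relation \eqref{eq:orthog_semi_discrete} is written exactly in that combined form, and every subsequent time derivative falls on $\psi$, $\psi_h$, or $P_h\psi$ (and $P_h$ commutes with $\partial_t$ because it is bounded on $L^2$). Your subsequent integrations by parts in time, and the identity $-\int_0^T(\partial_t\theta_h,\theta_h)_\Omega=\tfrac12\|\theta_h(0)\|^2_{L^2(\Omega)}$, inherit the same problem since they require $\theta_h\in H^1(0,T;L^2(\Omega))$. This can in principle be repaired by mollifying $\psi$ in time and passing to the limit (your final bound contains no norm of $\partial_t\psi$), but as written the intermediate quantities are not defined.

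Second, your endpoint term produces $\|\eta(0)\|_{L^2(\Omega)}=\|\psi(0)-\widetilde{\Pi}_h\psi(0)\|_{L^2(\Omega)}$, which you bound by $Ch\|\psi(0)\|_{H^1(\Omega)}$. Since $g$ is only in $L^2(0,T;L^2(\Omega))$, one cannot guarantee $\psi(0)\in H^2(\Omega)$, only $\psi(0)\in H^1_0(\Omega)$; the elliptic projection of an $H^1$-only function is not defined for the SIPG form, and the first-order $L^2$ estimate you invoke is not the standard elliptic-projection estimate (which requires broken $H^2$ regularity even to control the energy-norm error). The paper avoids this by arranging the argument so that the only endpoint approximation needed is $\|\psi(0)-P_h\psi(0)\|_{L^2(\Omega)}\leq Ch\|\psi(0)\|_{H^1(\Omega)}$, a genuine $L^2$-projection estimate valid for $H^1$ functions. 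If you replace the elliptic projection by $P_h$ wherever a time derivative or the value at $t=0$ is involved — which is essentially what the paper's proof does — both gaps close.
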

\begin{proof}
The proof applies  the arguments in \cite{chrysafinos2002error} to a dG discretization of the backward problem. Define $R_h \psi(t) \in V_h^k (\mesh_h)$ as the elliptic projection of $\psi(t)$ 
\begin{equation}
a(R_h \psi(t) - \psi(t),v) = 0, \quad \forall v \in V_h^{k}(\mesh_h),  \,\, \forall t \in (0,T].   \label{eq:elliptic_projection}
\end{equation}
From the consistency property of the dG discretization,  \eqref{eq:elliptic_projection} and  \eqref{eq:semi_discrete_dG_backward}, we have the following  relation. 
\begin{equation}
-(\partial_t \psi(t) - \partial_t \psi_h (t), v)_\Omega + a(R_h \psi (t) - \psi_h(t),v )  = 0, \quad \forall v \in V_h^k(\mesh_h). \label{eq:orthog_semi_discrete} 
\end{equation} 
Let $P_h \psi(t)$ be the $L^2$ projection of $\psi(t)$. Thus, with the above, we can write
\begin{multline}
 - \frac12 \frac{d}{dt}\| \psi - \psi_h \|_{L^2(\Omega)}^2
+ a(R_h \psi_h (t) - \psi_h(t), R_h \psi(t) - \psi_h(t)) \\ 
= - (\partial_t \psi(t)- \partial_t \psi_h(t), \psi(t)- P_h \psi(t) )_\Omega + a(R_h\psi(t) - \psi_h(t), R_h\psi(t) - P_h \psi (t) ). 
\label{eq:Phpsiid}
\end{multline}
Using the definition of the $L^2$ projection repeatedly yields: 
\begin{multline*}
  (\partial_t \psi(t) - \partial_t \psi_h(t), \psi(t) - P_h \psi(t) )_\Omega 
 =   (\partial_t \psi (t), \psi(t) - P_h \psi(t) )_\Omega  \\ 
= (\partial_t \psi(t) - \partial_t P_h \psi(t), \psi(t) - P_h \psi(t))_\Omega
= \frac{1}{2} \frac{d}{dt}\| \psi(t) - P_h \psi (t)\|_{L^2(\Omega)}^2. 
\end{multline*}
With the coercivity and continuity properties \eqref{eq:coercivity_property}, \eqref{eq:continuity_prop}, and the above relation,  equation \eqref{eq:Phpsiid} becomes:
\begin{multline*}
-  \frac12 \frac{d}{dt}\| \psi - \psi_h\|_{L^2(\Omega)}^2 +\frac12 \|R_h \psi(t) - \psi_h(t)\|_{\DG}^2 \\
   \leq -\frac{1}{2} \frac{d}{dt}\| \psi(t) - P_h \psi(t)\|_{L^2(\Omega)}^2 
+ C\|R_h \psi(t) - \psi_h(t)\|_{\DG}\|R_h \psi(t) - P_h \psi(t)\|_{\DG}.
 \end{multline*}
An application of Young's inequality, integration from $0$ to $T$ and approximation properties yield:
\[
\Vert \psi(0)-\psi_h(0)\Vert_{L^2(\Omega)}^2 + \frac12 \int_0^T  \Vert R_h\psi(t)-\psi_h(t)\Vert_{\DG}^2
\leq C h^2 \Vert \psi(0)\Vert_{H^1(\Omega)}^2 + C h^2 \Vert \psi\Vert_{L^2(0,T;H^2(\Omega))}^2.
\]
The final result follows with a triangle inequality.
\end{proof}
\begin{lemma}\label{lemma:bound_dg_l2rhs}
 Assume that $\psi$ belongs to $L^2(0,T;H^{s}(\Omega))$ for $s > 3/2$. Then, there exists a constant $C>0$ independent of $h$ such that 
    $$ \|\psi - \psi_h \|_{L^2(0,T;L^2(\Omega))} \leq C h^{\min(k+1,s)} \|\psi\|_{L^2(0,T;H^{s}(\Omega))}. $$
\end{lemma}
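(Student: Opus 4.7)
The plan is to perform an Aubin--Nitsche duality argument, adapted to the backward-in-time setting, to upgrade the $L^2(0,T;\DG)$ bound of Lemma~\ref{lemma:linfinity_conv} to an $L^2(0,T;L^2(\Omega))$ bound with the optimal rate. Set $e=\psi-\psi_h$ and let $P_h:L^2(\Omega)\to V_h^k(\mesh_h)$ denote the $L^2$-orthogonal projection, which commutes with $\partial_t$. First I will introduce the forward-in-time dual problem: find $\phi$ with $\phi(0)=0$, $\phi|_{\partial\Omega}=0$, and $\partial_t\phi-\Delta\phi = e$ in $\Omega\times(0,T]$. By convexity of $\Omega$ and parabolic regularity,
\[
\|\phi\|_{L^2(0,T;H^2(\Omega))}+\|\partial_t\phi\|_{L^2(0,T;L^2(\Omega))}\leq C\|e\|_{L^2(0,T;L^2(\Omega))}.
\]
Multiplying the dual PDE by $e$, integrating, using adjoint consistency of $a$ on $\phi\in H^2\cap H_0^1$ (whose jumps vanish) and one integration by parts in time (in which the boundary contributions disappear thanks to $\phi(0)=0$ and $e(T)=\psi(T)-\psi_h(T)=0$), I arrive at the identity
\[
\|e\|_{L^2(L^2)}^2 = -\int_0^T(\partial_t e,\phi)_\Omega\,dt + \int_0^T a(e,\phi)\,dt.
\]

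Next, I will use the Galerkin-type consistency $-(\partial_t e,v_h)_\Omega+a(e,v_h)=0$ for $v_h\in V_h^k(\mesh_h)$, obtained by subtracting the dG equation from the weak form of the backward PDE (well defined because $s>3/2$). Testing it with $v_h=P_h\phi(t)$ and subtracting from the identity above yields
\[
\|e\|_{L^2(L^2)}^2 = -\int_0^T(\partial_t e,\phi-P_h\phi)_\Omega\,dt + \int_0^T a(e,\phi-P_h\phi)\,dt =: I_1+I_2.
\]
For $I_1$, I will integrate by parts in time a second time; since $P_h\phi(0)=0$ and $e(T)=0$, the endpoint terms vanish, and the commutation $\partial_t P_h=P_h\partial_t$ gives $I_1=\int_0^T(e,\partial_t\phi-P_h\partial_t\phi)_\Omega\,dt$. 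Because $\psi_h\in V_h^k$, the $L^2$-orthogonality of $P_h$ allows me to replace $e$ by $\psi-P_h\psi$ in the integrand. Cauchy--Schwarz, the contractivity $\|w-P_hw\|_{L^2}\leq\|w\|_{L^2}$, the projection estimate $\|\psi-P_h\psi\|_{L^2}\leq Ch^{\min(k+1,s)}\|\psi\|_{H^s}$, and the dual regularity bound then give $|I_1|\leq Ch^{\min(k+1,s)}\|\psi\|_{L^2(H^s)}\|e\|_{L^2(L^2)}$.

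For $I_2$, continuity of $a$ together with the standard estimate $\vertiii{\phi-P_h\phi}_{\DG}\leq Ch\|\phi\|_{H^2}$ (obtained from $L^2$-projection approximation in the broken $H^1$ norm combined with face trace estimates, using that $\phi\in H^2\cap H_0^1$ has no jumps) give $|I_2|\leq C\|e\|_{L^2(0,T;\DG)}\cdot h\|\phi\|_{L^2(H^2)}$. A minor refinement of Lemma~\ref{lemma:linfinity_conv} using the full $L^2(H^s)$ regularity of $\psi$ in place of $L^2(H^2)$ furnishes $\|e\|_{L^2(0,T;\DG)}\leq Ch^{\min(k,s-1)}\|\psi\|_{L^2(H^s)}$, so that $|I_2|\leq Ch^{\min(k+1,s)}\|\psi\|_{L^2(H^s)}\|e\|_{L^2(L^2)}$. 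Adding the bounds on $I_1$ and $I_2$ and dividing by $\|e\|_{L^2(L^2)}$ completes the proof.

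The hard part is $I_1$: the dual $\phi$ has only $\partial_t\phi\in L^2(0,T;L^2(\Omega))$, so a naive approximation bound on $\|\partial_t\phi-P_h\partial_t\phi\|$ cannot produce any rate in $h$. The trick is the double integration by parts in time combined with the commutation $\partial_t P_h=P_h\partial_t$: this shifts the small factor onto $\|\psi-P_h\psi\|_{L^2}$, where I can extract the desired rate $h^{\min(k+1,s)}$ from the assumed $H^s$ regularity of $\psi$, while the remaining $\|\partial_t\phi\|_{L^2(L^2)}$ factor is merely absorbed by the parabolic regularity bound for the dual.
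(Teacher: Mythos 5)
Your duality framework (forward dual problem, Galerkin orthogonality tested with $P_h\phi$, double integration by parts in time to shift the derivative off $\partial_t\phi$ and onto $\psi-P_h\psi$) is sound, and the treatment of $I_1$ is correct. The genuine gap is in $I_2$: you need $\|\psi-\psi_h\|_{L^2(0,T;\DG)}\leq Ch^{\min(k,s-1)}\|\psi\|_{L^2(0,T;H^s(\Omega))}$, and you dismiss this as a ``minor refinement'' of Lemma~\ref{lemma:linfinity_conv}. It is not minor, and for $k\geq 2$ it does not follow from the stated hypotheses. The classical route to a higher-order energy estimate tests the error equation for $R_h\psi-\psi_h$ and produces the term $\|\partial_t(\psi-R_h\psi)\|_{L^2(\Omega)}$, i.e.\ it requires spatial regularity of $\partial_t\psi$ --- precisely what the lemma must avoid, since the dual data only gives $\partial_t\psi\in L^2(0,T;L^2(\Omega))$. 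The alternative used in the proof of Lemma~\ref{lemma:linfinity_conv} (rewriting the time-derivative pairing as $\tfrac12\tfrac{d}{dt}\|\psi-P_h\psi\|^2_{L^2(\Omega)}$) avoids $\partial_t\psi$ but, after integration over $(0,T)$, leaves the endpoint term $\|\psi(0)-P_h\psi(0)\|^2_{L^2(\Omega)}$ on the right-hand side. Since $\psi\in L^2(0,T;H^s)\cap H^1(0,T;L^2)$ only yields $\psi(0)\in H^{s/2}(\Omega)$ (and with a norm involving $\partial_t\psi$), this term is at best $O(h^{s/2})$, which is strictly worse than the required $O(h^{\min(k,s-1)})$ whenever $s>2$; hence your claimed rate for $I_2$, and therefore the final rate $h^{\min(k+1,s)}$, is not reached for $k\geq 2$. (For $k=1$ and $s=2$ --- the only case actually invoked later in Theorem~\ref{theorem:convergence_semidiscrete_3d1d} --- your argument does close, modulo replacing $\|\cdot\|_{\DG}$ by $\vertiii{\cdot}_{\DG}$ on the error when applying the continuity bound \eqref{eq:continuity_prop}, and modulo the fact that the constant then involves $\|\psi\|_{L^\infty(0,T;H^1(\Omega))}$ rather than only $\|\psi\|_{L^2(0,T;H^s(\Omega))}$.)

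The paper avoids this obstruction entirely by a different (Thom\'ee-type) argument: it introduces the continuous and discrete elliptic solution operators $Q$ and $Q_h$, rewrites both the semi-discrete scheme and the PDE in the ``integrated'' form $-Q_h\partial_t\psi_h+\psi_h=Q_hg$ and $-Q\partial_t\psi+\psi=Qg$, and derives the error equation $-Q_h\partial_t e_h+e_h=\rho_h$ with $\rho_h=(Q-Q_h)(\Delta\psi)=R_h\psi-\psi$. Testing with $e_h$ and using that $Q_h$ is self-adjoint and positive (via coercivity of $a$) gives $\int_0^T\|e_h\|^2_{L^2(\Omega)}\leq\int_0^T\|\rho_h\|^2_{L^2(\Omega)}$ directly, and $\|\rho_h\|_{L^2(\Omega)}$ is just the $L^2$ error of the elliptic projection, which carries the full rate $h^{\min(k+1,s)}\|\psi\|_{H^s(\Omega)}$ with no energy-norm estimate and no reference to $\partial_t\psi$ or $\psi(0)$. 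If you want to keep your duality structure, you would need to supply a proof of the higher-order energy bound under the lemma's hypotheses (which I do not believe is possible for $k\geq 2$, $s>2$) or restructure the estimate of $I_2$ so that it does not require one.
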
 
\begin{proof}
The proof extends the arguments of Theorem 2.5 in \cite{thomee2007galerkin} given for  the continuous Galerkin discretization and adapts it to the backward parabolic problem. We define two linear operators $Q: L^2(\Omega) \rightarrow H^1_0(\Omega) \cap H^2(\Omega)$ and $Q_h: L^2(\Omega) \rightarrow V_h^k(\mesh_h)$ as follows. For $\phi \in L^2(\Omega)$,
\begin{alignat*}{3}
Q\phi &= z, \,\,&& \mathrm{with} -\Delta z = \phi \, \mathrm{ in } \, \Omega \,\, && \mathrm{and} \,\, z \vert_{\partial \Omega} = 0, \\
Q_h \phi &= z_h, \,\,&& \mathrm{with}  \,\, a (z_h, v ) = (\phi, v)_\Omega, \,\, && \forall v \in V_h^k(\mesh_h).
\end{alignat*}
It is clear that 
\begin{equation}
Q(\Delta w) = -w, \quad \forall w \in H^2(\Omega).\label{eq:Qprop}
\end{equation}
The operator  $Q_h$ is selfadjoint since $a$ is symmetric. Indeed, for any $z, w \in L^2(\Omega)$, 
\begin{equation}
(Q_h z, w)_\Omega=a(Q_h w, Q_h z ) = a(Q_h z, Q_h w) = (z,Q_h w)_\Omega. 
\end{equation}
We also define the discrete Laplacian operator $\Delta_h:  V_h^k(\mesh_h) \rightarrow V_h^k(\mesh_h)$ satisfying 
$$ (\Delta_h w_h , v)_\Omega = -a(w_h, v), \quad \forall v \in V_h^k(\mesh_h). $$
Since $a$ is coercive, we also have that $Q_h(\Delta_h w_h) = - w_h.$ With the discrete Laplacian, we can write \eqref{eq:semi_discrete_dG_backward} as 
$$-\partial_t \psi_h(t)- \Delta_h \psi_h(t) = P_h g(t). $$
Applying the operator $Q_h$ to the above equality, we obtain 
$$ -Q_h \partial_t \psi_h(t) + \psi_h (t) = Q_h  P_h g(t) = Q_h g(t).  $$
On the continuous level, we also have 
$$ - Q \frac{\partial}{\partial t} \psi(t) + \psi(t) = Q g(t). $$
Define $e_h = \psi_h - \psi$ and $\rho_h = -\psi - Q_h (\Delta \psi)$, then 
\begin{multline}
-Q_h \partial_t e_h + e_h =  Q_h g + (Q_h-Q) \partial_t  \psi - Q g
 =   (Q- Q_h )\left(-\partial_t \psi - g\right) = (Q-Q_h)(\Delta \psi) = \rho_h. \label{eq:error_eq}
\end{multline} 
The last equality is obtained with \eqref{eq:Qprop}. This implies
\[
(-Q_h \partial_t e_h, e_h)_\Omega + \frac12 \Vert e_h \Vert_{L^2(\Omega)}^2 \leq \frac12 \Vert \rho_h\Vert_{L^2(\Omega)}^2.
\]
Since $Q_h$ is self-adjoint and $Q_h$ commutes with the derivative in time operator, we obtain
\begin{equation}
   - \frac{\partial }{\partial t} (e_h, Q_h e_h)_\Omega + \|e_h\|_{L^2(\Omega)}^2 \leq \|\rho_h \|_{L^2(\Omega)}^2. 
\end{equation}
We integrate from $t=0$ to $t = T$ and observe that by coercivity we have 
$$ (e_h,Q_h e_h)_\Omega = a(Q_h e_h,Q_h e_h) \geq \frac{1}{2}\|Q_h e_h\|^2_{\DG}.  $$ 
Hence, since $e_h(T) = 0$, 
\begin{equation}
    \frac{1}{2}\|Q_h e_h(0)\|^2_{\DG} + \int_{0}^T \|e_h\|_{L^2(\Omega)}^2 \leq \int_{0}^T \|\rho_h\|_{L^2(\Omega)}^2. \label{eq:error_eq1}
\end{equation}
In addition, note that by consistency of the dG discretization 
$$  a (Q_h( -\Delta \psi), v) = ( -\Delta \psi  , v) = a(\psi,v), \quad  \forall v \in V_h^k(\mesh_h).$$
Thus, we have, if $\psi$ belongs to $L^2(0,T;H^{s}(\Omega))$
\[
\Vert \rho_h \Vert_{L^2(\Omega)} = \Vert \psi + Q_h(\Delta \psi)\Vert_{L^2(\Omega)} \leq C  h^{\min(k+1,s)} \|\psi\|_{L^2(0,T;H^{s}(\Omega))}.
\]
We can then conclude with \eqref{eq:error_eq1}.
\end{proof}
With Lemma \ref{lemma:linfinity_conv} and Lemma \ref{lemma:bound_dg_l2rhs}, we show the main result of this section. 
\begin{theorem}\label{theorem:convergence_semidiscrete_3d1d}
Let $u$ be the very weak solution to \eqref{eq:parabolic_1}-\eqref{eq:parabolic_3} and let $\uDG$ satisfies \eqref{eq:semi_discrete_dG_1}-\eqref{eq:semi_discrete_init}.
There exists a constant $C$ independent of $h$  such that for any $\theta \in (0,\frac{1}{2})$, 
\begin{align}
\|\uDG - u\|_{L^2(0,T;L^{2}(\Omega))} & \leq C(\theta) h^{1-\theta} (\|f\|_{L^2(0,T;L^2(\Lambda))} + \|u^0\|_{L^2(\Omega)}). \label{eq:convergence_parabolic}
\end{align}
\end{theorem}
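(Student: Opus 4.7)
The plan is to use a duality argument based on the backward parabolic problem \eqref{eq:backward_parabolic_1}--\eqref{eq:backward_parabolic_3} and its semi-discrete dG approximation $\psi_h$ from \eqref{eq:semi_discrete_dG_backward}; the preceding Lemma~\ref{lemma:linfinity_conv} and Lemma~\ref{lemma:bound_dg_l2rhs} are tailored exactly for this purpose. The first step is to derive the transposition identity
\begin{equation*}
\int_0^T (\uDG - u, g)_\Omega \, dt = \int_0^T \int_\Lambda f(\psi_h - \psi)\, ds\, dt + (u^0, \psi_h(0) - \psi(0))_\Omega, \quad \forall g \in L^2(0,T;L^2(\Omega)).
\end{equation*}
This is obtained by testing \eqref{eq:semi_discrete_dG_1} against $v = \psi_h(t)$ and \eqref{eq:semi_discrete_dG_backward} against $v = \uDG(t)$, subtracting (the $a(\cdot,\cdot)$ terms cancel by symmetry), integrating in time from $0$ to $T$, using $\psi_h(T) = 0$ together with the $L^2$-projection identity $(\uDG(0), \psi_h(0))_\Omega = (u^0, \psi_h(0))_\Omega$ granted by \eqref{eq:semi_discrete_init}, and then subtracting the very weak representation $\int_0^T (u,g)_\Omega\,dt = \int_0^T\int_\Lambda f\psi\,ds\,dt + (u^0, \psi(0))_\Omega$.

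Taking supremum over $g$ of unit norm in $L^2(0,T;L^2(\Omega))$ reduces the task to estimating the two right-hand side terms. The initial data contribution is controlled directly by Cauchy--Schwarz together with Lemma~\ref{lemma:linfinity_conv} and the backward regularity estimate \eqref{eq:parabolic_regularity_backward}, yielding $|(u^0, \psi_h(0)-\psi(0))_\Omega| \leq C h \|u^0\|_{L^2(\Omega)} \|g\|_{L^2(0,T;L^2(\Omega))}$.

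The key observation for the line source term is the algebraic identity $\int_\Omega f_h v_h\,dx = \int_\Lambda f v_h\,ds$ for every $v_h \in V_h^k(\mesh_h)$, which is a direct consequence of \eqref{eq:def_fh}. Letting $I_h^E$ denote the local $L^2(E)$-projection onto $\mathbb{P}^k(E)$, this identity yields the decomposition
\begin{equation*}
\int_\Lambda f(\psi_h - \psi)\, ds = \int_\Omega f_h (\psi_h - \psi)\, dx + \sum_{E \in \mathcal{T}_\Lambda} \int_{E \cap \Lambda} f(I_h^E \psi - \psi)\, ds.
\end{equation*}
The first summand is handled by Cauchy--Schwarz using Lemma~\ref{lemma:bound_on_fh} to bound $\|f_h\|_{L^2(\Omega)}$ by $Ch^{-1}\|f\|_{L^2(\Lambda)}$, and Lemma~\ref{lemma:bound_dg_l2rhs} (applied with $\psi \in L^2(0,T;H^2(\Omega))$ by \eqref{eq:parabolic_regularity_backward}) to bound $\|\psi - \psi_h\|_{L^2(0,T;L^2(\Omega))}$ by $Ch^2 \|g\|_{L^2(0,T;L^2(\Omega))}$; the two $h$ factors then combine to yield the desired rate.

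For the residual sum I would rely on a scaling/trace-type estimate on the one-dimensional set $E \cap \Lambda$, of the form $\|w\|_{L^2(E\cap \Lambda)} \leq C\bigl(h^{-1/2}\|w\|_{L^2(E)} + h^{1/2}\|\nabla w\|_{L^2(E)} + h^{3/2}\|\nabla^2 w\|_{L^2(E)}\bigr)$ valid for $w\in H^2(E)$ under the hypothesis $|E\cap \Lambda|\leq Ch$, applied to $w = \psi - I_h^E \psi$ together with standard polynomial approximation bounds and Cauchy--Schwarz over elements. This step, which crucially exploits that $\Lambda$ is a $\mathcal{C}^2$ curve with $|E\cap\Lambda|\leq C h$, is the main technical obstacle: a trace on a curve of codimension two is not obtained from standard Sobolev embeddings, so the element-by-element scaling argument must be carried out by hand. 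Combining all contributions and using $h \leq h^{1-\theta}$ for $\theta \in (0,1/2)$ and sufficiently small $h$ yields the claim.
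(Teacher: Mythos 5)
Your duality framework is the same as the paper's: the transposition identity
\begin{equation*}
\int_0^T (\uDG-u,g)_\Omega \,dt= (u^0,\psi_h(0)-\psi(0))_\Omega + \int_0^T\!\!\int_\Lambda f(\psi_h-\psi)
\end{equation*}
is exactly the paper's $R_1+R_2$, and your treatment of the initial-data term via Lemma~\ref{lemma:linfinity_conv} and \eqref{eq:parabolic_regularity_backward} coincides with the paper's. Where you genuinely diverge is the line-source term. The paper inserts the Lagrange interpolant $L_h\psi$, bounds $\sum_{E}\int_{E\cap\Lambda}f(\psi_h-L_h\psi)$ by an $L^\infty$--$L^2$ inverse estimate in essentially the same way you bound your $f_h$-summand (both pieces give $O(h)$ via Lemma~\ref{lemma:bound_dg_l2rhs}), and then controls $\int_\Lambda f(L_h\psi-\psi)$ with the global trace inequality $\|v\|_{L^r(\Lambda)}\le C(q)\|v\|_{W^{1,q}(\Omega)}$ for $2<q<3$, paying $h^{3/q-1/2}=h^{1-\theta}$; that step is the sole source of the $\theta$-loss. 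Your decomposition through $f_h$ and the local $L^2$ projection is legitimate: $\int_\Omega f_h v_h=\int_\Lambda f v_h$ for $v_h\in V_h^k(\mesh_h)$ and $\int_\Omega f_h(\psi-I_h^E\psi)=0$ elementwise give exactly the identity you wrote, and your first summand is correctly estimated.

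The flaw is in the trace inequality you state for the residual sum, which you yourself identify as the crux. The powers $h^{-1/2},h^{1/2},h^{3/2}$ are the codimension-one (face) scaling; for a curve inside a three-dimensional element the correct scaling is
\begin{equation*}
\|w\|_{L^2(E\cap\Lambda)}\le C\bigl(h^{-1}\|w\|_{L^2(E)}+\|\nabla w\|_{L^2(E)}+h\|\nabla^2 w\|_{L^2(E)}\bigr),
\end{equation*}
obtained by mapping to the reference element, using that the preimage of $E\cap\Lambda$ has length at most $Ch^{-1}|E\cap\Lambda|\le C$ together with $H^2(\hat E)\hookrightarrow L^\infty(\hat E)$, and scaling back (arc length scales like $h$, volume like $h^3$). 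Your version fails already for $w\equiv 1$: the left side is $|E\cap\Lambda|^{1/2}\sim h^{1/2}$ while your right side is $Ch^{-1/2}|E|^{1/2}\sim Ch$, which is smaller for small $h$. Fortunately the corrected inequality still closes the argument: applied to $w=\psi-I_h^E\psi$ it gives $Ch\,|\psi|_{H^2(E)}$, and Cauchy--Schwarz over $\mathcal{T}_\Lambda$ yields an $O(h)$ bound for the residual sum, so your route (once the scaling is repaired and the uniformity of the reference-element constant under shape regularity, plus the $H^2$-stability of $I_h^E$ for $k\ge 2$, are checked) actually avoids the $h^{\theta}$ loss incurred by the paper's global $W^{1,q}(\Omega)\to L^r(\Lambda)$ trace argument and proves the stated estimate with room to spare.
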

\begin{proof}
  The proof is based on a duality argument and follows similar techniques as the proof of Theorem 3.4 in \cite{gong2013error}. Define $\chi(t) = \uDG(t) - u(t)$. Fix $g \in L^2(0,T;L^2(\Omega))$ and  let $\psi$ solve \eqref{eq:backward_parabolic_1}-\eqref{eq:backward_parabolic_3}. With \eqref{eq:linear_functional_g},  consistency of the dG discretization for \eqref{eq:backward_parabolic_1}-\eqref{eq:backward_parabolic_3}, and the definition of $\uDG(0)$ (see \eqref{eq:semi_discrete_init}),  we have 
  \begin{align*}
  \int_0^T (\chi, g)_\Omega &= \int_0^T ( \uDG, -\partial_t \psi - \Delta \psi )_\Omega 
- \int_{0}^T \int_\Lambda f \psi - (u^0 ,\psi(0) )_\Omega\\
  &=  \int_0^T -(\partial_t \psi, \uDG)_\Omega + \int_0^T a(\psi, \uDG) - \int_{0}^T \int_\Lambda f \psi - (u^0, \psi(0))_\Omega \\
  & =\int_0^T (-\partial_t \psi_h,  \uDG)_\Omega + \int_0^T a(\psi_h, \uDG) - \int_{0}^T \int_\Lambda f \psi - ( u^0, \psi(0))_\Omega \\
  & = ( \psi_h(0), \uDG(0))_\Omega + \int_0^T (\partial_t \uDG , \psi_h)_\Omega   + \int_0^T a(\psi_h, \uDG) - \int_{0}^T \int_\Lambda f \psi - (u^0, \psi(0))_\Omega \\ 
  & = (u^0 ,\psi_h(0) - \psi(0))_\Omega + \int_0^T \int_{\Lambda} f (\psi_h -\psi)  = R_1 + R_2. 
  \end{align*}
For $R_1$, we use Cauchy-Schwarz's inequality, Lemma~\ref{lemma:linfinity_conv} and \eqref{eq:parabolic_regularity_backward}: 
\begin{equation}
  |R_1| \leq \|u^0\|_{L^2(\Omega)} \|\psi_h(0) - \psi(0)\|_{L^2(\Omega)} \leq C h \|u^0\|_{L^2(\Omega)} \|g\|_{L^2(0,T; L^2(\Omega))}. \label{eq:bound_T2}
\end{equation} 
  For the term $R_2$, we use the following trace inequality valid for
any $2<q<3$ and $q\leq r < q/(3-q)$  (see Theorem 4.12 in \cite{adams2003sobolev} and Proposition 2.3 in \cite{nguyen2001}).
  \begin{equation}
  \|v\|_{L^r(\Lambda)} \leq C(q)\|v\|_{W^{1,q}(\Omega)}, \quad  \forall v \in W^{1,q}(\Omega).  \label{eq:trace_inequality_Lambda}
  \end{equation}
We denote by $L_h \psi$  the Lagrange interpolant of $\psi$ in  $W_h^k(\mesh_h)$. From Theorem 3.1.6 in \cite{ciarlet2002finite}, we have 
\begin{equation}
\|\psi - L_h \psi\|_{W^{1,q}(E)} \leq C(q) h^{\frac{3}{q} - \frac12} |\psi|_{H^2(E)}, \quad \forall E \in \mesh_h. 
 \end{equation} 
 From the above bound  and Jensen's inequality, we obtain
 \begin{multline}
 \|\psi - L_h \psi\|_{W^{1,q}(\Omega)} = \left( \sum_{E \in \mesh_h}  \|\psi - L_h \psi\|^q_{W^{1,q}(E)} \right)^{1/q}  \leq h^{\frac{3}{q}-\frac{1}{2}} \left(\sum_{E\in \mesh_h}  |\psi|^q_{H^2(E)} \right)^{1/q} \leq    h^{\frac{3}{q}-\frac{1}{2}} |\psi|_{H^2(\Omega)}. \label{eq:approximation_psi}
 \end{multline}
Let $r$ and $q$ satisfy the conditions in \eqref{eq:trace_inequality_Lambda} and let $r'$ be the conjugate exponent of $r$ ($1/r+1/r'=1$). 
Note that $L_h \psi \in W^{1,q}(\Omega)$. Hence, with \eqref{eq:trace_inequality_Lambda} and \eqref{eq:approximation_psi}, we obtain 
\begin{equation}
  \|\psi - L_h \psi \|_{L^r(\Lambda)} \leq C(q) \|\psi - L_h \psi\|_{W^{1,q}(\Omega)} \leq C(q) h^{\frac{3}{q} - \frac{1}{2}} |\psi|_{H^2(\Omega)}. \label{eq:3d1d_trace_lr_approx}
\end{equation}
 With Cauchy-Schwarz's inequality, \eqref{eq:3d_1d_inverse_estimate}, and \eqref{eq:3d1d_trace_lr_approx},  we have 
  \begin{align}
  \int_{\Lambda} f(\psi_h - \psi) &= \sum_{E\in \mathcal{T}_\Lambda}\int_{E\cap \Lambda} f (\psi_h - L_h \psi_h) + \int_{\Lambda} f(L_h \psi_h - \psi) \nonumber \\ 
& \leq  \sum_{E\in \mathcal{T}_\Lambda} \|f\|_{L^1(E\cap \Lambda)} \| \psi_h - L_h \psi_h\|_{L^{\infty}(E)} + \|f\|_{L^{r'}(\Lambda)}\|L_h \psi_h - \psi\|_{L^r(\Lambda)} \nonumber \\
& \leq  C \sum_{E\in \mathcal{T}_\Lambda}  |E \cap \Lambda|^{1/2}\|f\|_{L^2(E\cap \Lambda)}h^{-3/2} \| \psi_h - L_h \psi_h\|_{L^{2}(E)} + C(q)h^{\frac{3}{q} - \frac12} \|f\|_{L^{r'}(\Lambda)}|\psi|_{H^2(\Omega)} \nonumber \\ 
& \leq Ch^{-1} \|f\|_{L^2(\Lambda)} \|\psi_h - L_h \psi\|_{L^2(\Omega)} +  C(q)h^{\frac{3}{q} - \frac12} \|f\|_{L^{2}(\Lambda)}|\psi|_{H^2(\Omega)}.  \label{eq:fully_discrete_error_0}
  \end{align}
The last inequality holds since $r' < 2$. From Lemma \ref{lemma:bound_dg_l2rhs}, approximation properties, and \eqref{eq:parabolic_regularity_backward}, it then follows that 
\begin{align}
|R_2| & \leq C h^{-1} \|f\|_{L^2(0,T;L^2(\Lambda))}\|\psi_h - L_h \psi\|_{L^2(0,T;L^2(\Omega))}+C(q) h^{\frac{3}{q} - \frac12} \|f\|_{L^{2}(0,T;L^2(\Lambda))}|\psi|_{L^2(0,T;H^2(\Omega))} \nonumber \\ 
& \leq  C h \|f\|_{L^2(0,T;L^2(\Lambda))}\|\psi\|_{L^2(0,T;H^2(\Omega))}+C(q) h^{\frac{3}{q} - \frac12} \|f\|_{L^{2}(0,T;L^2(\Lambda))}|\psi|_{L^2(0,T;H^2(\Omega))} \nonumber \\ 
& \leq  C h \|f\|_{L^2(0,T;L^2(\Lambda))}\|g\|_{L^2(0,T; L^2(\Omega))}+C(q) h^{\frac{3}{q} - \frac12} \|f\|_{L^{2}(0,T;L^2(\Lambda))}\|g\|_{L^2(0,T; L^2(\Omega))}.  \nonumber 
\end{align}  
For any $\theta \in (0,1/2)$, choose $q = 6/(3-2\theta)$. The bound for $R_2$ becomes
\begin{equation}
  |R_2| \leq C(\theta) h^{1-\theta}\|f\|_{L^2(0,T;L^2(\Lambda))} \|g\|_{L^2(0,T; L^2(\Omega))}. \label{eq:bound_T1} 
\end{equation}
We remark that
\[
\|\chi\|_{L^2(0,T;L^2(\Omega))}  =    \sup_{\begin{array}{c} g \in L^2(0,T;L^2(\Omega))\\ g \neq 0\end{array}} \frac{|\int_{0}^T (\chi, g)_\Omega |}{\|g\|_{L^2(0,T; L^2(\Omega))}}.
\]
Therefore, with \eqref{eq:bound_T2} and \eqref{eq:bound_T1}, we can conclude.
\end{proof}
\subsection{Fully discrete formulation}
In this section, we consider a backward Euler discretization of problem \eqref{eq:parabolic_1}-\eqref{eq:parabolic_3}.  To simplify notation, we drop the subscript $\DG$ on the discrete solution,  namely $u_h^n = u_h^{\mathrm{DG},n}$. Let $\tau>0$ denote the time step size and consider a uniform partition of the
time interval $(0, T]$ into $N_T$ subintervals. We define a sequence of  dG approximations $ (u_h^n)_{0\leq n\leq N_T} \in V_h^{k}(\mesh_h)$ such that for all $n = 1,\ldots, N_T$  
\begin{align}
(u_h^n - u_h^{n-1},v)_\Omega + \tau a(u_h^n, v) = \tau \int_{\Lambda} f(t^n)   v, \quad  \forall v \in V_h^k(\mesh_h),  \label{eq:fully_discrete_3d1d}
\end{align}  with $u_h^0 = \uDG(0)$ defined by \eqref{eq:semi_discrete_init}. 
The existence and uniqueness of $ (u_h^n)_{0\leq n\leq N_T}$ follows from a standard proof by contradiction where the coercivity of $a$ \eqref{eq:coercivity_property} is used. From the fully discrete solutions, we construct a piecewise constant in time solution, denoted by $u_{h,\tau}$, as follows:
\[
u_{h,\tau}(t,\bm{x}) = u_h^n(\bm{x}), \quad t^{n-1} < t \leq t^n, \quad n\geq 1, \quad u_{h,\tau}(0,\bm{x}) = u_h^0(\bm{x}), \quad
\forall \bm{x}\in\Omega.
\]
The main result of this section is the following convergence theorem. For convenience, we define
\[
\Vert f \Vert_{\ell^2(0,T;L^2(\Lambda))} = \left(\tau \sum_{n=1}^{N_T} \Vert f(t^n) \Vert_{L^2(\Lambda)}^2\right)^{1/2}.
\]
\begin{theorem}\label{thm:discretecv}
Assume that $\partial_t f \in L^2(0,T;L^1(\Lambda))$ and let $\theta$ be in $(0,\frac{1}{2})$. 
There exists a constant $C$ independent of $h$ and $\tau$, but depending of $\theta$,  such that 
\begin{multline}
\|u-u_{h,\tau}\|_{L^2(0,T;L^2(\Omega))} \leq C  (\tau h^{-1} +h )
\left( \Vert f \Vert_{\ell^2(0,T;L^2(\Lambda))}   
+\|\partial_t f\|_{L^2(0,T; L^1(\Lambda))} +  \|u^0\|_{L^2(\Omega)} \right) \\  + C h^{1-\theta}   \|f\|_{L^2(0,T;L^2(\Lambda))} .
\end{multline}
As a consequence, if $\tau \leq h^{2-\theta}$, we have 
\begin{equation}
  \|u-u_{h,\tau} \|_{L^2(0,T;L^2(\Omega))} \leq C h^{1-\theta} 
(\|f\|_{L^2(0,T;L^2(\Lambda))} + \|\partial_t f\|_{L^2(0,T; L^1(\Lambda))} +   \|f\|_{\ell^2(0,T;L^2(\Lambda))} + \|u^0\|_{L^2(\Omega)}).
\end{equation} 
\end{theorem}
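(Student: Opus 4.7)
The plan is to apply the triangle inequality
\[
\|u-u_{h,\tau}\|_{L^2(0,T;L^2(\Omega))} \leq \|u-\uDG\|_{L^2(0,T;L^2(\Omega))} + \|\uDG-u_{h,\tau}\|_{L^2(0,T;L^2(\Omega))},
\]
invoke Theorem~\ref{theorem:convergence_semidiscrete_3d1d} on the first summand (whose proof actually yields $Ch^{1-\theta}\|f\|_{L^2(0,T;L^2(\Lambda))}+Ch\|u^0\|_{L^2(\Omega)}$, with the $h\|u^0\|$ piece then absorbable into $(\tau h^{-1}+h)\|u^0\|$), and concentrate on the purely temporal error $\uDG-u_{h,\tau}$. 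Setting $\xi^n = \uDG(t^n)-u_h^n$ and splitting $\uDG(t)-u_h^n = (\uDG(t)-\uDG(t^n))+\xi^n$ on each subinterval, the task reduces to bounding $\tau\sum_n\|\xi^n\|_{L^2(\Omega)}^2$ together with $\sum_n\int_{t^{n-1}}^{t^n}\|\uDG(t)-\uDG(t^n)\|_{L^2(\Omega)}^2\,dt \leq \tau^2\|\partial_t\uDG\|_{L^2(0,T;L^2(\Omega))}^2$.

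For the sequence $(\xi^n)$, evaluating the semi-discrete identity \eqref{eq:semi_discrete_dG_1} at $t=t^n$ and subtracting \eqref{eq:fully_discrete_3d1d} yields
\[
(\xi^n-\xi^{n-1},v)_\Omega + \tau\, a(\xi^n,v) = (\tau_n,v)_\Omega,\qquad \tau_n := \uDG(t^n)-\uDG(t^{n-1})-\tau\,\partial_t\uDG(t^n),
\]
for every $v\in V_h^{k}(\mesh_h)$. Testing with $v=\xi^n$, invoking coercivity \eqref{eq:coercivity_property} and the fact $\xi^0=0$, then summing and applying discrete Gronwall produces $\|\xi^N\|^2 + c\,\tau\sum_n\|\xi^n\|_\DG^2 \leq C\sum_n|(\tau_n,\xi^n)|$. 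Writing $\tau_n = -\int_{t^{n-1}}^{t^n}(\sigma-t^{n-1})\,\partial_{tt}\uDG(\sigma)\,d\sigma$ and substituting the differentiated identity $(\partial_{tt}\uDG,\xi^n) = -a(\partial_t\uDG,\xi^n)+\int_\Lambda \partial_t f\,\xi^n$ splits $(\tau_n,\xi^n)$ into an $a$-continuity contribution and a Dirac-source contribution.

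The central auxiliary estimate, obtained by combining the inverse inequality $\|v_h\|_{L^\infty(\Omega)}\leq Ch^{-1/2}\|v_h\|_{L^6(\Omega)}$ with the Poincar\'e-Sobolev bound \eqref{eq:3d_1d_Poincare_inequality}, is the line-integral duality
\[
\Bigl|\int_\Lambda g\,v_h\Bigr| \leq C\,h^{-1/2}\|g\|_{L^1(\Lambda)}\,\|v_h\|_\DG,\qquad v_h\in V_h^{k}(\mesh_h).
\]
Applied inside the decomposition of $(\tau_n,\xi^n)$ and combined with Cauchy-Schwarz in $n$ and Young's inequality to absorb $\|\xi^n\|_\DG$ into the LHS, this yields $\|\xi^N\|^2 \leq C\tau^2\bigl(\|\partial_t\uDG\|_{L^2(0,T;\DG)}^2 + h^{-1}\|\partial_t f\|_{L^2(0,T;L^1(\Lambda))}^2\bigr)$. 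The companion stability bound $\|\partial_t\uDG\|_{L^2(0,T;L^2(\Omega))}\leq Ch^{-1}\bigl(\|u^0\|_{L^2}+\|f\|_{L^2(0,T;L^2(\Lambda))}\bigr)$ follows from testing \eqref{eq:semi_discrete_dG_1} with $\partial_t\uDG$, using Lemma~\ref{lemma:bound_on_fh} to write $|\int_\Lambda f\,\partial_t\uDG|\leq Ch^{-1}\|f\|_{L^2(\Lambda)}\|\partial_t\uDG\|_{L^2(\Omega)}$, the symmetry of $a$ to recover the energy identity $a(\uDG,\partial_t\uDG)=\tfrac12\tfrac{d}{dt}a(\uDG,\uDG)$, and the inverse estimate $\|\uDG(0)\|_\DG\leq Ch^{-1}\|u^0\|_{L^2}$ on the $L^2$-projected initial datum; the resulting $L^2$-in-time norm of $f$ is then exchanged for $\|f\|_{\ell^2(0,T;L^2(\Lambda))}$ plus a remainder involving $\|\partial_t f\|_{L^2(0,T;L^1(\Lambda))}$ via a standard trapezoidal-type identity. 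Assembling all pieces produces the $(\tau h^{-1}+h)$ prefactor of the claim, and the consequence $\tau\leq h^{2-\theta}$ follows by inspection.

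The main obstacle will be the careful bookkeeping of negative powers of $h$ each time a line integral $\int_\Lambda g\,v$ is recast as a volume duality on $V_h^{k}(\mesh_h)$, so that the final coefficient of $\tau$ is no worse than $h^{-1}$. This requires using the sharp inverse-plus-Sobolev combination above rather than the cruder $\|v_h\|_{L^\infty}\leq Ch^{-3/2}\|v_h\|_{L^2}$; the $L^1(\Lambda)$ hypothesis on $\partial_t f$ is precisely calibrated to the former, and any strengthening or relaxation would alter the final rate.
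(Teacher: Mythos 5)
Your route is genuinely different from the paper's: you split off the semi-discrete error via Theorem~\ref{theorem:convergence_semidiscrete_3d1d} and then run a direct energy/consistency analysis on $\xi^n=\uDG(t^n)-u_h^n$, whereas the paper never introduces $\partial_t\uDG$ at all. Instead it dualizes the full error $u-u_{h,\tau}$ against the backward problem \eqref{eq:backward_parabolic_1}--\eqref{eq:backward_parabolic_3}, decomposes $\int_0^T(u_{h,\tau}-u,g)_\Omega$ into three terms, and pairs the \emph{a priori} bound $\sum_n\|u_h^n-u_h^{n-1}\|_{L^2(\Omega)}^2\leq C\tau h^{-2}(\cdots)$ of Lemma~\ref{lemma:stability_fullydiscrete_3d1d} with the smoothness of the dual solution $\psi$ (which satisfies \eqref{eq:parabolic_regularity_backward}); the factor $(\tau h^{-1}+h)$ comes out of the product $\tau^{1/2}h^{-1}\cdot(\tau^{-1/2}h^2+\tau^{1/2})$. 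Several of your ingredients are sound: the error equation $(\xi^n-\xi^{n-1},v)_\Omega+\tau a(\xi^n,v)=(\tau_n,v)_\Omega$, the duality $|\int_\Lambda g\,v_h|\leq Ch^{-1/2}\|g\|_{L^1(\Lambda)}\|v_h\|_{\DG}$ (this is exactly how the paper bounds $\mathcal{W}_1$), and the stability bound $\|\partial_t\uDG\|_{L^2(0,T;L^2(\Omega))}\leq Ch^{-1}(\|u^0\|_{L^2(\Omega)}+\|f\|_{L^2(0,T;L^2(\Lambda))})$, which correctly handles the $\uDG(t)-\uDG(t^n)$ contribution at the rate $\tau h^{-1}$.

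The gap is in the treatment of $\xi^n$. Your claimed intermediate estimate $\|\xi^N\|^2\leq C\tau^2\bigl(\|\partial_t\uDG\|_{L^2(0,T;\DG)}^2+h^{-1}\|\partial_t f\|_{L^2(0,T;L^1(\Lambda))}^2\bigr)$ requires a bound on $\|\partial_t\uDG\|_{L^2(0,T;\DG)}$ that you never establish and that is not available at the rate you need. The only routes open here are (i) the inverse inequality $\|\partial_t\uDG\|_{\DG}\leq Ch^{-1}\|\partial_t\uDG\|_{L^2(\Omega)}$ applied to your $L^2$-in-space stability bound, or (ii) testing the time-differentiated semi-discrete equation with $\partial_t\uDG$, which brings in $\|\partial_t\uDG(0)\|_{L^2(\Omega)}$; since $u^0$ is only in $L^2(\Omega)$ and $\uDG(0)$ is its $L^2$ projection, $\|\partial_t\uDG(0)\|_{L^2(\Omega)}\leq C\|\uDG(0)\|_{\DG}\|\cdot\|_{\ast}\sim h^{-2}\|u^0\|_{L^2(\Omega)}$ up to the line-source term. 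Either way $\|\partial_t\uDG\|_{L^2(0,T;\DG)}=O(h^{-2})$, so the contribution to the error is $O(\tau h^{-2}\|u^0\|_{L^2(\Omega)})$ rather than $O(\tau h^{-1}\|u^0\|_{L^2(\Omega)})$. This is not absorbed by the claimed right-hand side, and under the time-step restriction $\tau\leq h^{2-\theta}$ it produces a term of size $h^{-\theta}$, i.e.\ the asserted convergence rate is lost. The same loss occurs if you avoid $\partial_{tt}\uDG$ by writing $\tau_n=\int_{t^{n-1}}^{t^n}(\partial_t\uDG(\sigma)-\partial_t\uDG(t^n))\,d\sigma$ and using the equation at the two times, because $\|\uDG(\sigma)-\uDG(t^n)\|_{\DG}$ again costs an extra $h^{-1}$. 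This is precisely the obstruction the paper's duality argument is designed to circumvent: all temporal increments of the discrete solution are measured only in $L^2(\Omega)$ (via Lemma~\ref{lemma:stability_fullydiscrete_3d1d}) and the derivatives land on the smooth dual solution $\psi$ instead of on $\uDG$. To repair your argument you would need either additional regularity of $u^0$ (e.g.\ a discrete $H^1$ bound on $\uDG(0)$ uniform in $h$) or a discrete smoothing estimate of the form $\|\partial_t\uDG(t)\|\leq Ct^{-1}\|u^0\|$, neither of which is assumed or proved in the paper.
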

The proof of the theorem requires an intermediate bound on the discrete solutions, that is stated in the following lemma.
\begin{lemma} \label{lemma:stability_fullydiscrete_3d1d}
  There exists a constant $C$ independent of $\tau$ and $h$ such that the following estimate holds. For $1 \leq m \leq N_T$, 
\begin{equation}
\sum_{n=1}^m  \|u_h^n - u_h^{n-1}\|_{L^2(\Omega)}^2 +  \tau \sum_{n=1}^m \|u_h^n - u_h^{n-1}\|^2_{\DG}  
+ \tau \|u_h^m\|_{\DG}^2 \leq C\tau h^{-2} \left( \|u^0\|_{L^2(\Omega)}^2 + \Vert f \Vert_{\ell^2(0,T;L^2(\Lambda))}^2 \right). 
\end{equation}
\end{lemma}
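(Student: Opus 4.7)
The plan is to test the fully discrete equation \eqref{eq:fully_discrete_3d1d} with the discrete time increment $v = u_h^n - u_h^{n-1}$, which, because $a$ is symmetric and the bilinear form evaluated on the increment is manifestly nonnegative, will produce the three left-hand side quantities in telescoping form. Exploiting the polarization identity
\[
a(u_h^n, u_h^n - u_h^{n-1}) = \tfrac12\bigl(a(u_h^n,u_h^n) - a(u_h^{n-1},u_h^{n-1}) + a(u_h^n - u_h^{n-1}, u_h^n - u_h^{n-1})\bigr),
\]
I get
\[
\|u_h^n - u_h^{n-1}\|_{L^2(\Omega)}^2 + \tfrac{\tau}{2}\bigl(a(u_h^n,u_h^n) - a(u_h^{n-1},u_h^{n-1})\bigr) + \tfrac{\tau}{2}\,a(u_h^n - u_h^{n-1}, u_h^n - u_h^{n-1}) = \tau \int_\Lambda f(t^n)(u_h^n - u_h^{n-1}).
\]
Summing over $n = 1, \dots, m$ causes the middle term to telescope; coercivity \eqref{eq:coercivity_property} then bounds $a(u_h^m,u_h^m)$ from below by $\tfrac12\|u_h^m\|_{\DG}^2$ and each $a(u_h^n-u_h^{n-1}, u_h^n-u_h^{n-1})$ from below by $\tfrac12\|u_h^n - u_h^{n-1}\|_{\DG}^2$, producing exactly the three terms on the left-hand side of the lemma.

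Next I would control the two resulting terms on the right-hand side. The initial term $\tau\, a(u_h^0,u_h^0)$ is bounded using continuity \eqref{eq:continuity_prop} together with the inverse inequality \eqref{eq:3d_1d_inverse_esimate_nabla} (and a standard trace inverse for the jump contribution), yielding $\tau\,a(u_h^0,u_h^0) \leq C\tau\|u_h^0\|_{\DG}^2 \leq C\tau h^{-2}\|u_h^0\|_{L^2(\Omega)}^2 \leq C\tau h^{-2}\|u^0\|_{L^2(\Omega)}^2$, where the last step uses the $L^2$-stability of the definition \eqref{eq:semi_discrete_init}. For the source term, I need a discrete trace estimate on $\Lambda$: for any $v_h\in V_h^k(\mathcal{E}_h)$ and any $E\in\mathcal{T}_\Lambda$, a combination of $\|v_h\|_{L^2(E\cap\Lambda)}^2 \leq |E\cap\Lambda|\,\|v_h\|_{L^\infty(E)}^2$ with the assumption $|E\cap\Lambda|\leq Ch$ and the inverse estimate \eqref{eq:3d_1d_inverse_estimate} (with $p=2$, $q=\infty$) gives
\[
\|v_h\|_{L^2(\Lambda)} \leq C h^{-1} \|v_h\|_{L^2(\Omega)}, \quad \forall v_h\in V_h^k(\mathcal{E}_h).
\]
Applying Cauchy--Schwarz on $\Lambda$ followed by this bound gives
\[
\tau \int_\Lambda f(t^n)(u_h^n - u_h^{n-1}) \leq C\tau h^{-1} \|f(t^n)\|_{L^2(\Lambda)} \|u_h^n - u_h^{n-1}\|_{L^2(\Omega)}.
\]

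Finally, Young's inequality with a small parameter absorbs the $\|u_h^n - u_h^{n-1}\|_{L^2(\Omega)}^2$ factor into the corresponding term on the left-hand side, leaving the summed remainder $C\tau^2 h^{-2}\sum_{n=1}^m \|f(t^n)\|_{L^2(\Lambda)}^2 = C\tau h^{-2}\|f\|_{\ell^2(0,T;L^2(\Lambda))}^2$, which matches the stated bound. The only mildly delicate step is the discrete trace bound on $\Lambda$; everything else is a standard energy-type argument for backward Euler on a symmetric form. I expect the main technical point is to verify that the factor $h^{-1}$ in this trace inequality combines with the $\tau$ from the time step and the summation to yield exactly the $\tau h^{-2}\|f\|_{\ell^2}^2$ scaling, which works out because $\|f\|_{\ell^2(0,T;L^2(\Lambda))}^2$ already carries one factor of $\tau$.
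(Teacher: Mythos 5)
Your proposal is correct and follows essentially the same route as the paper: testing with $v = u_h^n - u_h^{n-1}$, the polarization/telescoping identity for the symmetric form, the elementwise $L^\infty$ inverse estimate combined with $|E\cap\Lambda|\leq Ch$ to get the $h^{-1}$ factor on the line source, Young's inequality to absorb the increment, and the inverse/$L^2$-projection bound for $\tau\,a(u_h^0,u_h^0)$. The only cosmetic difference is that you package the source bound as a global discrete trace inequality on $\Lambda$ before applying Cauchy--Schwarz, whereas the paper works element by element; the resulting estimate is identical.
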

\begin{proof}
Let $v = u_h^n - u_h^{n-1}$ in \eqref{eq:fully_discrete_3d1d}. Using the symmetry of $a$, we obtain 
\begin{multline*}
  \|u_h^n - u_h^{n-1}\|_{L^2(\Omega)}^2 +\frac{\tau}{2} \left( a(u_h^n,u_h^n) 
-a(u_h^{n-1},u_h^{n-1}) + a(u_h^n - u_h^{n-1}, u_h^n -u_h^{n-1})\right) = \tau \int_{\Lambda} f(t^n) (u_h^{n} - u_h^{n-1}). 
\end{multline*}
We observe that by H\"{o}lder's inequality and \eqref{eq:3d_1d_inverse_estimate},
\begin{align*}
  \int_{\Lambda} f(t^n) (u_h^{n} - u_h^{n-1}) & \leq \sum_{E \in \mathcal{T}_\Lambda} |E \cap \Lambda|^{1/2} \|f(t^n)\|_{L^2(E)} \|u_h^n - u_h^{n-1}\|_{L^\infty(E)} \\&\leq C \sum_{E\in \mathcal{T}_\Lambda} h^{-1} \|f(t^n)\|_{L^2(E\cap \Lambda)}\|u_h^n - u_h^{n-1}\|_{L^2(E)}.  
\end{align*}
With the coercivity \eqref{eq:coercivity_property} and the above bound, we obtain
\begin{multline*}
  \|u_h^n - u_h^{n-1}\|_{L^2(\Omega)}^2 +\frac{\tau}{2} a(u_h^n,u_h^n) - \frac{\tau}{2} a(u_h^{n-1},u_h^{n-1}) 
+ \frac{\tau}{4}\|u_h^n - u_h^{n-1}\|^2_{\DG}  \\  
\leq C \tau^2 h^{-2} \|f(t^n)\|^2_{L^2(\Lambda)} + \frac12  \|u_h^{n} - u_h^{n-1}\|_{L^2(\Omega)}^2.
\end{multline*}
We sum the resulting inequality from $n = 1$ to $n =m $ and use the coercivity \eqref{eq:coercivity_property}
\begin{equation*}
 \frac12 \sum_{n=1}^{m}\|u_h^n - u_h^{n-1}\|_{L^2(\Omega)}^2 
+ \frac{\tau}{4} \|u_h^m\|_{\DG}^2 + \frac{\tau}{4} \sum_{n=1}^m \|u_h^n - u_h^{n-1}\|^2_{\DG}  
\leq \frac{\tau}{2} a(u_h^0, u_h^0) +   C\tau^2 h^{-2} \sum_{n=1}^{m} \|f(t^n)\|^2_{L^2(\Lambda)}.
 \end{equation*}
With the continuity of $a$ \eqref{eq:continuity_prop}, an inverse inequality  and  the stability of the $L^2$ projection, we have 
 \begin{equation}
a(u_h^0, u_h^0)\leq C\|u_h^0\|^2_{\DG} \leq C h^{-2}\|u_h^0\|_{L^2(\Omega)}^2 \leq  C h^{-2}\|u^0\|_{L^2(\Omega)}^2.
 \end{equation}
With the above bound, we conclude the proof. 
\end{proof}
\begin{proof}
[Proof of Theorem~\ref{thm:discretecv}]. The proof uses some techniques from the proof of Theorem 3.4 in \cite{gong2016finite}. We first fix  $g \in L^2(0,T;L^2(\Omega))$ and consider $\psi$ the solution of \eqref{eq:backward_parabolic_1}-\eqref{eq:backward_parabolic_3}. From \eqref{eq:linear_functional_g}, we have 
\begin{align}
\int_{0}^T (u_{h,\tau}-u, g)_\Omega = \sum_{n=1}^{N_T} \int_{t^{n-1}}^{t^n} (u_h^n, g)_\Omega 
- (u^0 , \psi(0))_\Omega - \int_{0}^T \int_\Lambda f \psi.  \label{eq:fully_discrete_3d1d_0}
\end{align}
We rewrite the first term in the right-hand side as
\begin{align*}
\int_{t^{n-1}}^{t^n}( u_h^n, g)_\Omega = \int_{t^{n-1}}^{t^n} (u_h^n, -\partial_t \psi - \Delta \psi )_\Omega 
=  - ( u_h^n, \psi(t^n) - \psi(t^{n-1}))_\Omega + \int_{t^{n-1}}^{t^n} a(u_h^n, \psi) \\ 
=  (u_h^n - u_h^{n-1}, \psi(t^{n-1}) )_\Omega - \left((u_h^n , \psi(t^n) )_\Omega  -(u_h^{n-1},\psi(t^{n-1}))_\Omega  \right)  
+ \int_{t^{n-1}}^{t^n} a(u_h^n, \psi).
\end{align*}
Since $\psi(T) = 0$,  \eqref{eq:fully_discrete_3d1d_0} reads 
\begin{multline}
  \int_{0}^T ( u_{h,\tau}-u , g)_\Omega = \sum_{n=1}^{N_T} (u_h^n - u_h^{n-1}, \psi(t^{n-1}))_\Omega 
+ \sum_{n=1}^{N_T} \int_{t^{n-1}}^{t^n} a(u_h^n, \psi)  \\ 
- (u^0 - u_h^0, \psi(0))_\Omega - \int_{0}^T \int_\Lambda f \psi.  \label{eq:fully_discrete_3d1d_1}
  \end{multline}
For each $t \in (t^{n-1}, t^n]$, choose $v = R_h \psi(t)$ in \eqref{eq:fully_discrete_3d1d} (recall that $R_h \psi$ is defined by
\eqref{eq:elliptic_projection}). Integrate the resulting equation from $t^{n-1}$ to $t^n$, sum from $n=1$ to $n=N_T$, and divide by $\tau$. We obtain  
\begin{equation}
  \sum_{n=1}^{N_T} \int_{t^{n-1}}^{t^n} a(u_h^n, R_h \psi(t)) 
= - \frac{1}{\tau}\sum_{n=1}^{N_T} \int_{t^{n-1}}^{t^n} (u_h^n - u_h^{n-1}, R_h \psi(t))_\Omega 
+ \sum_{n=1}^{N_T} \int_{t^{n-1}}^{t^n} \int_{\Lambda} f(t^n) \, R_h \psi(t). 
\end{equation}
With the definition of \eqref{eq:elliptic_projection}, \eqref{eq:fully_discrete_3d1d_1} becomes 
\begin{align}
  \int_{0}^T  ( u_{h,\tau}-u, g )_\Omega = & \frac{1}{\tau}\sum_{n=1}^{N_T} \int_{t^{n-1}}^{t^n} (u_h^n - u_h^{n-1},\psi(t^{n-1}) - R_h \psi(t))_\Omega \nonumber\\   
& - (u^0 - u_h^0,\psi(0))_\Omega
+ \sum_{n=1}^{N_T} \int_{t^{n-1}}^{t^n} \int_{\Lambda} ( f(t^n) R_h \psi(t) - f(t) \psi(t)) 
= E_1 +E_2 + E_3.  \label{eq:fully_discrete_3d1d_2}
  \end{align}
For $E_1$, we introduce $\psi(t)$ and  write  
\[
(u_h^n-u_h^{n-1}, \psi(t^{n-1}) - R_h \psi(t))_\Omega = 
-(u_h^n-u_h^{n-1},\psi(t)-R_h \psi(t)
+\int_{t^{n-1}}^t \partial_t \psi)_\Omega.
\]
Therefore,  
using error bounds of the elliptic projection, we obtain  
\begin{align}
|E_1| & \leq 
 C \tau^{-1} h^2 \sum_{n=1}^{N_T} \int_{t^{n-1}}^{t^{n}}  \|u_h^{n} - u_h^{n-1}\|_{L^2(\Omega)} \|\psi(t)\|_{H^{2}(\Omega)}   \nonumber \\
&\quad + \tau^{-1} \sum_{n=1}^{N_T} \int_{t^{n-1}}^{t^{n}} \|u_h^n -u_h^{n-1}\|_{L^2(\Omega)} \,  (t - t^{n-1} )^{1/2}\|\partial_t \psi\|_{L^2(t^{n-1},t; L^2(\Omega))} \nonumber  \\ 
& \leq  C \tau^{-\frac12} h^2 \sum_{n=1}^{N_T}  \|u_h^{n} - u_h^{n-1}\|_{L^2(\Omega)} \|\psi\|_{L^2(t^{n-1}, t^{n};H^{2}(\Omega))} \nonumber +  C \tau^{\frac12} \sum_{n=1}^{N_T} \|u_h^n -u_h^{n-1}\|_{L^2(\Omega)} \|\partial_t \psi\|_{L^2(t^{n-1},t^n; L^2(\Omega))}  \nonumber \\ 
& \leq C \left( \sum_{n=1}^{N_T} \|u_h^n - u_h^{n-1}\|_{L^2(\Omega)}^2 \right)^{1/2} 
(\tau^{-1/2} h^2 \|\psi\|_{L^2(0, T; H^{2}(\Omega))}
+\tau^{1/2}\|\partial_t \psi\|_{L^2(0,T;L^2(\Omega))}).  \label{eq:bound_E1_0_3D1D}
\end{align}
With Lemma \ref{lemma:stability_fullydiscrete_3d1d} and \eqref{eq:parabolic_regularity_backward}, \eqref{eq:bound_E1_0_3D1D} reads 
\begin{equation}
  |E_1| \leq C (\tau h^{-1} +h )\|g\|_{L^2(0,T;L^2(\Omega))} \left(\|f\|_{\ell^2(0,T;L^2(\Lambda))}  + \|u^0\|_{L^2(\Omega)} \right). \label{eq:bound_E1_3D1D}
\end{equation}
The term $E_2$ is easily handled 
since $u_h^0$ is the $L^2$ projection of $u^0$. We use approximation properties  of the Lagrange operator $L_h$  and \eqref{eq:parabolic_regularity_backward} 
 \begin{equation} 
   E_2 = (u_h^0 - u^0, \psi(0) - L_h\psi(0))_\Omega  \leq  C h\|u^0\|_{L^2(\Omega)} \|\psi(0)\|_{H^1(\Omega)} \leq  Ch\|u^0\|_{L^2(\Omega)}\|g\|_{L^2(0,T;L^2(\Omega))}. \label{eq:bound_E_3_fullydiscrete_3d1d}
 \end{equation}
For the term $E_3$, we write 
\begin{multline*}
  \int_{\Lambda} (f(t^n) R_h \psi(t) - f(t) \psi(t)) = 
\sum_{E \in \mathcal{T}_\Lambda}  \int_{E\cap \Lambda} (f(t^n) - f(t)) R_h \psi(t) 
+  \sum_{E \in \mathcal{T}_\Lambda} \int_{E \cap \Lambda} f(t) (R_h \psi(t) -  \psi(t))
= \mathcal{W}_1 +\mathcal{W}_2. 
\end{multline*}
For $\mathcal{W}_1$, we H\"{o}lder's inequality, \eqref{eq:3d_1d_inverse_estimate} ($q= \infty, p = 6)$ and \eqref{eq:3d_1d_Poincare_inequality}. We obtain 
\begin{multline*}
  |\mathcal{W}_1| \leq \|f(t^n) - f(t)\|_{L^1(\Lambda)} \|R_h \psi(t)\|_{L^{\infty}(\Omega)}   \\ 
\leq C h^{-\frac{1}{2}} \|f(t^n) - f(t)\|_{L^1(\Lambda)} \|R_h \psi(t)\|_{L^6(\Omega)} 
\leq C h^{-\frac{1}{2}}\|f(t^n) - f(t)\|_{L^1(\Lambda)} \|R_h \psi(t)\|_{\DG}.
\end{multline*}
Since $R_h\psi$ is the elliptic projection of $\psi$, we note that $\|R_h \psi\|_{\DG} \leq C \|\psi\|_{H^2(\Omega)}$ and 
we obtain 
\begin{equation}
  |\mathcal{W}_1| \leq C (t^n - t)^{1/2} h^{-\frac{1}{2}} \|\partial_t f\|_{L^2(t, t^n; L^1(\Lambda))} \|\psi(t)\|_{H^2(\Omega)}.
\end{equation}
For $\mathcal{W}_2$, we apply a similar argument as for the derivation of \eqref{eq:fully_discrete_error_0} (by introducing
the Lagrange interpolant $L_h\psi$) and obtain for any $2<q<3$
\begin{equation}
\mathcal{W}_2 \leq C h^{-1}\|f(t)\|_{L^2(\Lambda)} \|R_h \psi(t) - L_h \psi(t)\|_{L^2(\Omega)} 
+ C(q) h^{\frac{3}{q} - \frac12} \|f(t)\|_{L^{2}(\Lambda)}|\psi(t)|_{H^2(\Omega)}.
\end{equation}
Hence, with approximation properties,  choosing $q = 6/(3-2\theta)$ for $0<\theta < 1/2$, and \eqref{eq:parabolic_regularity_backward},
the bound on $E_3$ reads 
\begin{align}
 |E_3| & \leq 
C \tau h^{-\frac12} \|\partial_t f\|_{L^2(0,T; L^1(\Lambda))} \|\psi\|_{L^2(0,T;H^2(\Omega))}
+ C h^{-1}\|f\|_{L^2(0,T;L^2(\Lambda))} \|R_h \psi - L_h \psi\|_{L^2(0,T;L^2(\Omega))}  \nonumber \\ 
& \quad  + C(\theta) h^{1-\theta}   \|f\|_{L^2(0,T;L^2(\Lambda))} \|\psi\|_{L^2(0,T;H^2(\Omega))} \nonumber\\
& \leq (C\tau h^{-\frac12}\|\partial_t f\|_{L^2(0,T; L^1(\Lambda))} + C(\theta) h^{1-\theta}   \|f\|_{L^2(0,T;L^2(\Lambda))} )\|g\|_{L^2(0,T;L^2(\Omega))}. \label{eq:bound_E_2_fullydiscrete_3d1d}
 \end{align} 
Therefore, with \eqref{eq:fully_discrete_3d1d_2} and the bounds \eqref{eq:bound_E1_3D1D}, \eqref{eq:bound_E_3_fullydiscrete_3d1d} and 
\eqref{eq:bound_E_2_fullydiscrete_3d1d},  we conclude that  for any non-zero $g\in  L^2(0,T;L^2(\Omega))$
 \begin{multline}
   \frac{ \int_0^T (u_{h,\tau}-u, g)_\Omega  }{\|g\|_{L^2(0,T;L^2(\Omega))}} \leq   C (\tau h^{-1} +h )\left(\|f\|_{\ell^2(0,T;L^2(\Lambda))}  +  \|u^0\|_{L^2(\Omega)} \right)\\
+ C \tau h^{-1} \|\partial_t f\|_{L^2(0,T; L^1(\Lambda))}  + C(\theta) h^{1-\theta}   \|f\|_{L^2(0,T;L^2(\Lambda))} .  
 \end{multline}
We conclude by taking supremum over all $g$. 
\end{proof}

\section{Numerical Results for Elliptic Problem}\label{sec:numerical_results}

We employ the method of manufactured solutions to test the convergence rates of the scheme~\ref{eq:DG_weak_form}.  
The domain is $(0,1)\times(0,1)\times(0,0.25)$  and the line $\Lambda$ is the vertical line passing through the point $(2/3,1/3,0)$.  The function $f$ is
chosen to be the constant function equal to $1$. 
The exact solution is defined by
\begin{equation}
u(x,y,z)= -\frac{1}{2 \pi} \ln\left( ((x-\frac{2}{3})^2 +(y-\frac{1}{3})^2 )^{1/2}\right).
\end{equation}
We compute the numerical errors on a series of uniformly refined meshes made of tetrahedra. We vary the mesh size and the polynomial degree.
The parameters in the definition of the bilinear form are chosen: $\epsilon = -1, \beta=1$. For $k=1$, we choose $\sigma = 5$
and for $k=2$, the penalty value is $\sigma = 12$.
Figure~\ref{fig:dg_solution} shows the dG solution for $k=1$; the size of the mesh is $h=1/16$ and the domain has been sliced for visualization. 
Table~\ref{tab:l2error} displays the $L^2$ errors and convergence rates for the numerical solution with $k=1$ and $k=2$. 
When errors are computed over the whole domain $\Omega$, they converge with a rate equal to one, which
is consistent with  our bound \eqref{eq:improved_global_dg_bd}.   
Next, we verify the accuracy of the solution away from the line singularity by computing the $L^2$ error in two subdomains 
$C_1 = (0.25,0.5)\times(0.5,0.75)\times(0,0.25)$ and $C_2 = (0.0,0.25)\times(0.75,0.1)\times(0,0.25)$. 
Table~\ref{tab:l2error} shows the errors in the $L^2$ norm over $C_1$ and over $C_2$ as the mesh is uniformly refined.
Errors converge with a rate equal to $2$, which is optimal for piecewise linear approximations and suboptimal for piecewise quadratic approximation. The numerical rates are consistent with \eqref{eq:localL2DGk1} for $k=1$  and \eqref{eq:local_estimate_result} for $k=2$. We also remark that the errors in $C_1$ and
in $C_2$ are several order of magnitude smaller than the errors in $\Omega$.
%
%
%

\begin{figure}[H]
\centering
\includegraphics[width=0.49\linewidth]{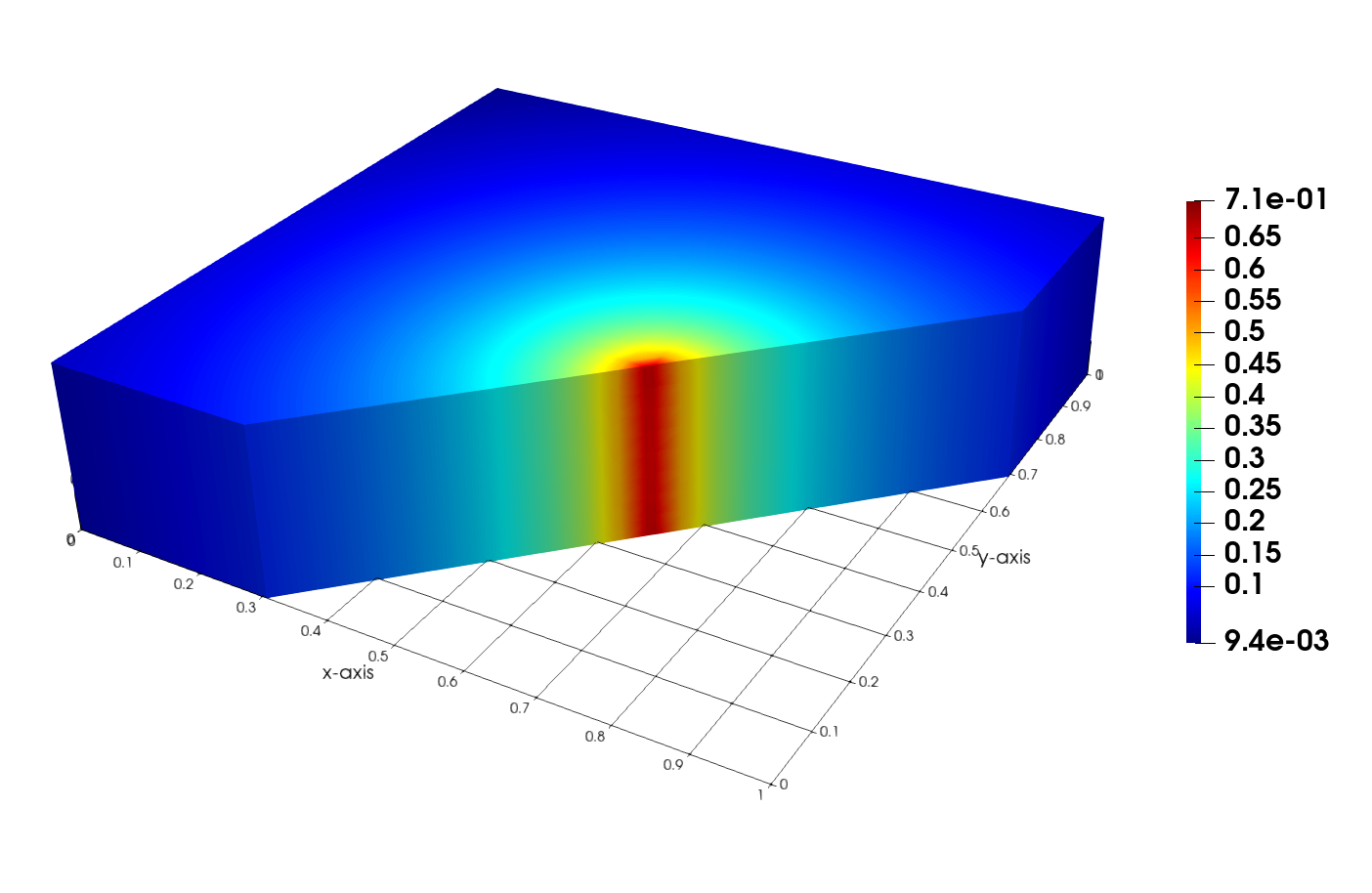}
\caption{View on sliced domain of the dG approximation obtained on mesh of size $h=1/16$.}
\label{fig:dg_solution} 
\end{figure}
To show the robustness of the scheme~\ref{eq:DG_weak_form}, we now consider a sinusoidal-like curve $\Lambda$ made of segments.  The numerical parameters
are the same as for the manufactured solution but here, we do not know the exact solution.  Figure~\ref{fig:sine_solution}
displays the DG solution on a mesh of size $h=1/10$.
\begin{figure}[H]
\centering
\includegraphics[width=0.49\linewidth]{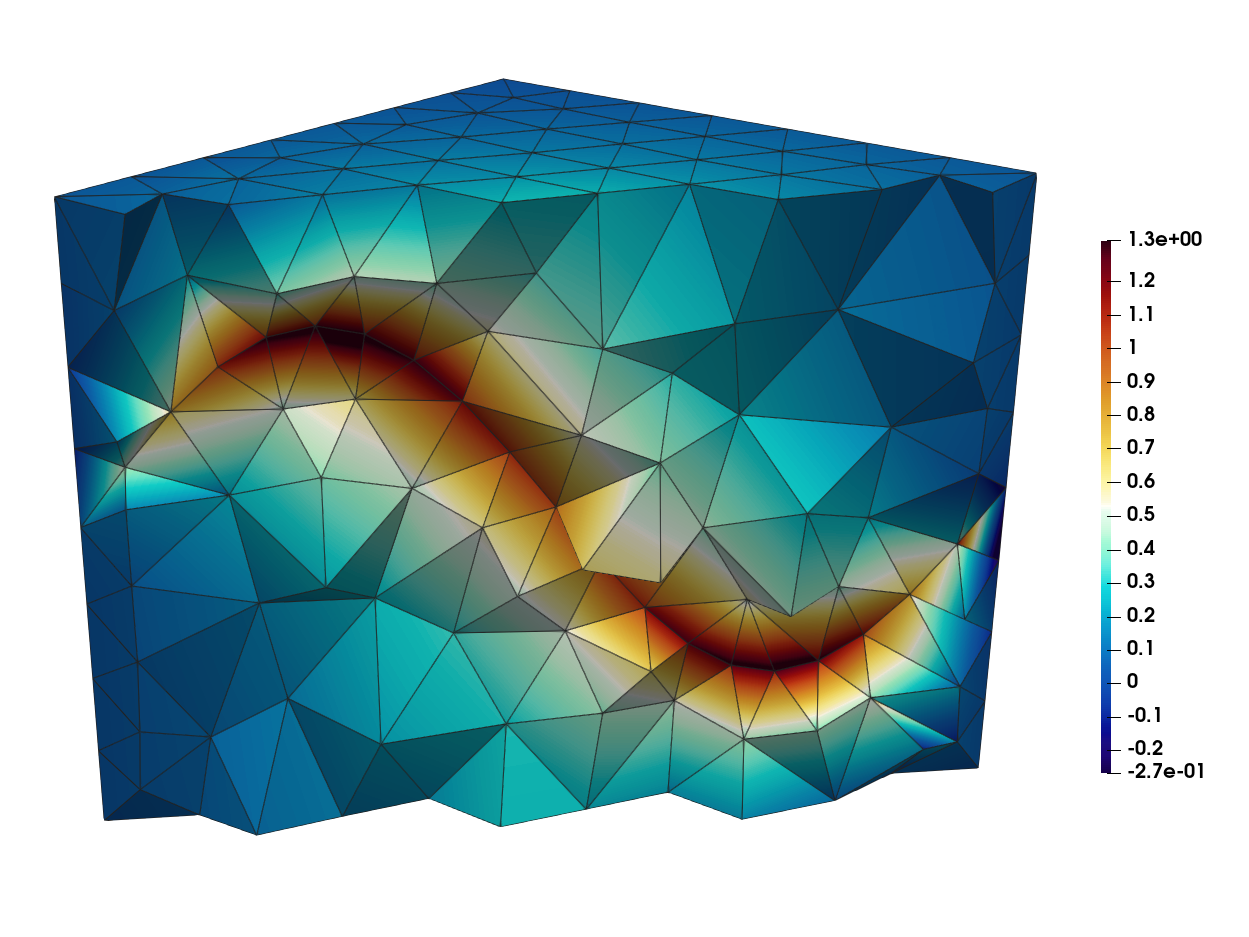}
\caption{Sliced view of the numerical solution for a piecewise linear curve $\Lambda$.}
\label{fig:sine_solution} 
\end{figure}

\bgroup
\def\arraystretch{1.3}
\begin{table}[H]
\centering
\footnotesize
\begin{tabular}{|c|c|c|c|c|c|c|c|}
\hline
\rowcolor{Gainsboro!20}
&   & $\mathbf{ \Vert u-\uDG \Vert_{L^2(\Omega)}}$ &  & $\mathbf{\Vert u-\uDG  \Vert_{L^2(C_1)}}$  &  & $\mathbf{ \Vert u-\uDG  \Vert_{L^2(C_2)}}$  &  \\
 \rowcolor{Gainsboro!20}
\textbf{k} & \textbf{h}  & \textbf{Error} & \textbf{Rate}  &  \textbf{Error}  & \textbf{Rate}  & \textbf{Error}  & \textbf{Rate}  \\
\hline
1 &1/4    & 6.99e-03   &        &   1.28e-04  &        & 2.54e-05   &       \\
&1/8    & 2.28e-03   &  1.31  &   3.00e-05  & 2.09   & 6.70e-06   & 1.92  \\
&1/16   & 1.33e-03   &  1.08  &   6.60e-06  & 2.18   & 1.84e-06   & 1.86  \\
&1/32   & 7.12e-04   &  0.90  &   1.63e-06  & 2.02   & 5.05e-07   & 1.87  \\
\hline
 \rowcolor{Gainsboro!20}
2 & 1/4 & 1.14e-02 &  & 1.09e-04 &  & 4.37e-06 &   \\
 \rowcolor{Gainsboro!20}
& 1/8 &  4.27e-03 & 1.42 & 1.98e-05 & 2.46 & 7.48e-07 & 2.55  \\
 \rowcolor{Gainsboro!20}
& 1/16  & 1.56e-03 & 1.45 & 6.22e-06 & 1.67 & 1.11e-07 & 2.75  \\
 \rowcolor{Gainsboro!20}
 & 1/32 & 6.14e-04 & 1.35 &  1.50e-06 & 2.05 & 1.77e-08 & 2.65  \\
\hline
\end{tabular}
\caption{Numerical errors and convergence rates for the numerical solution over the whole domain and the two subdomains.}
\label{tab:l2error}
\end{table}
\egroup

\section{Conclusions}

Convergence of the class of interior penalty discontinuous Galerkin methods applied to elliptic and parabolic equations
with Dirac line-source is proved by deriving error estimates in different norms.  Almost optimal error bounds are shown in regions away
from the line singularity. The proofs of the error estimates are technical and utilize dual problems and weighted Sobolev spaces.
Stronger results are obtained for the case of piecewise linear approximation since local error bounds are valid in regions that
may reach the boundary of the domain.  In the general case of approximation of degree $k\geq 2$, local error bounds are subpoptimal and
valid in regions strictly included in the domain. Most of the paper is dedicated to the analysis of the elliptic problem and convexity
of the domain is assumed.
For the parabolic problem, global error bounds in $L^2$ in time and in space are shown.  Future work would address relaxing the
convexity assumption and obtaining local
error bounds for the time-dependent problem.

\textit{Acknowledgment}: the authors are partially supported by NSF-DMS 1913291 and NSF-DMS 2111459.

\bibliographystyle{plain}
\bibliography{references}
\end{document}